%% file: ex_article.tex
\documentclass[hidelinks,onefignum,onetabnum]{siamart251216}

\usepackage{mathtools}
\usepackage{booktabs}
\usepackage{algorithm}
\usepackage{algorithmic}
\usepackage[ruled, linesnumbered, vlined, resetcount, algo2e]{algorithm2e}
\usepackage{subcaption}
\usepackage{enumitem}
\usepackage{amsmath}
\usepackage{tikz}
\usepackage[percent]{overpic}

\newlist{questions}{enumerate}{1}
\setlist[questions]{label=(Q-\arabic*), leftmargin=*, align=left, resume}

\input{ex_shared}
\allowdisplaybreaks[4]
\ifpdf
\hypersetup{
  pdftitle={Neural enrichment finite element method: A hybrid framework for problems with strong oscillations or interface problems},
  pdfauthor={Shihan Guo, Thomas Richter}
}
\fi

\newtheorem{assumption}{Assumption}[section]

\begin{document}

\maketitle

% REQUIRED
\begin{abstract}
We propose a hybrid method, the Neural Enrichment Finite Element Method (NEFEM), designed for problems involving strong oscillations or interface problems with weak discontinuities. This method is based on the stable generalized finite element method (SGFEM) framework, wherein neural networks (NNs) are introduced as enrichment functions for adaptivity, and the Ritz functional is applied for the training process. This works makes two main contributions. First, the method constructs local subspaces with superior approximation properties, significantly reducing the required number of degrees of freedom (DoFs). Second, minimal \emph{a priori} knowledge is required to define enrichment functions, as the NNs evolve heuristically during training. Furthermore, for smooth problems, we provide a residual-based error estimator and prove both its reliability and efficiency. For interface problems, a theoretical analysis on the optimal convergence of the SGFEM is studied, notably without imposing additional regularity assumptions. These analytic results guide the network architecture design and training strategies. The performance and effectiveness of the proposed method is validated through several numerical experiments.  
\end{abstract}

% REQUIRED
\begin{keywords}
Neural enrichment finite element method, oscillation solutions, interface problems, a posteriori error estimator, a priori analysis, adaptivity
\end{keywords}

% REQUIRED
\begin{MSCcodes}
65M60, 65N15, 68T07
\end{MSCcodes}

\section{Introduction}
In this work, we consider solving elliptic problems with strong oscillations as well as interface problems. The standard finite element method (FEM) typically exhibits notable limitations when applied to such problems. To be specific, for problems involving strong oscillations, the number of DoFs in the resulting linear system can be exceedingly large due to high resolution required to achieve meaningful results. When dealing with interface problems, the inherent lack of regularity across the interface causes the overall $H^1$ error for the FEM to deteriorates to $\mathcal{O}(h^{1/2})$
regardless of the polynomial degree $r$ of the finite element space; see the work of Babuška \cite{babuvska1970finite} for further details. 

Many approaches have been proposed for these problems. An important class of methods for solving problems with oscillations, or multiscale characteristics, is the multiscale finite element method (MsFEM) \cite{hou1997multiscale, hou1999convergence,hong2025fem}, where multiscale basis functions are introduced to capture local multiscale information. For interface problems, extensive research has explored both fitted approaches, such as the locally modified parametric FEM by Frei and Richter \cite{frei2014locally}, and unfitted techniques, including the generalized FEM (GFEM) \cite{fries2008corrected, babuvska2017strongly}, the cut FEM \cite{hansbo2014cut}, and the unfitted Nitsche method by Hansbo and Hansbo \cite{hansbo2002unfitted}. A shared characteristic of these approaches is that they all locally modify the underlying finite element space.

For the FEM applied to elliptic problems, Céa's lemma states that
\[
\Vert u-u_h\Vert_V \le C\inf_{w_h\in V_h}\Vert u - w_h\Vert_V,\text{ if } V_h\subset V.
\]
This implies that the subspace solution $u_h$ is the best approximation to the full-space $V$ in respect to the energy norm. Here, the approximation property is heavily dependent on the chosen subspace. In general, we choose piecewise polynomial spaces as the approximation subspace in the FEM, since the approximation property of piecewise polynomial basis is \textit{universal}. 
% For instance, the standard nodal interpolation error estimate using $P_1$ elements is given by
% $$
% \Vert u-\mathcal{I}_hu\Vert_1\le Ch\vert u\vert_2,\quad \forall u\in H^2(\Omega),$$
% demonstrating that functions in $H^2(\Omega)$ can be approximated by piecewise polynomials. 
However, the price of this universality is an excessive number of DoFs, particularly for functions exhibiting strong oscillations or lacking sufficient regularity. This naturally leads to the question:
\begin{questions}
    \item Different subspaces yield varying levels of accuracy. Can we tailor the subspace to each specific problem in order to reduce the number of DoFs?\label{Q1}
\end{questions}
The class of generalized/extended finite element methods (GFEM/XFEM) \cite{strouboulis2001generalized} shed light on how to address \ref{Q1}. Within the GFEM/XFEM framework, additional enrichment functions are introduced to supplement the standard polynomial subspace via the partition of unity method (PUM) \cite{melenk1996partition}. These enrichment functions are designed to capture local characteristics of the solution, enabling the resolution of various types of problems with non-smooth solutions \cite{cui2022stable} and interface problems \cite{wang2025general, gong2024improved}. Despite these advancements, an open question is still ahead of us:
\begin{questions}
    \item Defining enrichment functions typically necessitates \emph{a priori} knowledge, a requirement that is often difficult to satisfy in certain practical scenarios. How can we design adaptive enrichment functions that minimize the reliance on \emph{a priori} knowledge?\label{Q2}
\end{questions}
The universal approximation property of neural networks (NNs) provides a powerful mechanism to answer \ref{Q2}. In fact, deep learning-based numerical solvers for partial differential equations (PDEs) have gained significant attention in recent years. Specifically, several prominent methods have been developed for solving PDEs, such as the Deep Ritz method \cite{yu2018deep}, physics-informed neural networks (PINNs) \cite{raissi2019physics}, and weak adversarial networks \cite{zang2020weak}, which achieve this by designing different types of loss functions, network architectures, and training strategies. One line of research is the hybridization of finite elements and neural networks. The deep neural network multigrid solvers uses neural networks to represent fine scale contributions that cannot be resolved on coarse meshes~\cite{HartmannLessigMargenbergRichter2020,MargenbergJendersieLessigRichter2023}. Here, neural networks are used to predict finite element coefficients on finer meshes, enabling the embedding of the hybrid solution into the finite element space to simplify the analysis \cite{Kapustsin2023}. However, this approach prevents the networks from realizing their full approximation potential.
Several recent studies have explored the integration of the PUM with NNs, offering a promising avenue to answer \ref{Q2}. Baek et al. \cite{baek2024n} proposed a neural network-enriched Partition of Unity (NN-PU) method to adaptively solve boundary value problems via energy minimization, capturing complex local solution features efficiently. Wang et al. \cite{wang2025neural} introduced NNs into the GFEM framework, formulating the Neural Network Element Method. In this approach, local NNs are multiplied by envelope functions, effectively ``pasting'' these local approximations together to yield highly accurate global solutions.

While highly effective in capturing complex local features, both approaches are hindered by inherent drawbacks. In PUM-based methods, accurate numerical integration is notoriously cumbersome due to the irregular supports of the kernel functions. Meanwhile, though GFEM provides accurate results, it often suffers from bad conditioning \cite{babuvska2012stable}. Furthermore, the GFEM suffers from suboptimal convergence rates when addressing interface problems, primarily due to so-called blending effects \cite{fries2010extended}. The stable GFEM (SGFEM), introduced in \cite{babuvska2012stable}, is a modified variant to improve the conditioning. Thus, in addition to \ref{Q1} and \ref{Q2}, this work is also driven by several questions below:
\begin{questions}
    \item How can we incorporate NNs into the SGFEM framework? What are the appropriate loss functions, network architectures, and training strategies for this approach? \label{Q3}
    \item After answering \ref{Q3}, can we devise a reliable and efficient error estimator to dynamically monitor and adaptively guide the training process?\label{Q4}
    \end{questions}
Moreover, recent studies \cite{wang2025general, gong2024improved, zhang2019strongly} demonstrate, both numerically and theoretically, that SGFEM can recover an $\mathcal{O}(h)$ convergence rate in the energy norm for interface problems, provided that so-called general interface conditions are satisfied. However, obtaining this result requires additional regularity assumptions in the analysis.
\begin{questions}
    \item How do the general interface conditions guide the network architecture design for interface problems?\label{Q5}
    \item More importantly, is it possible to obviate this assumption and only require the solution to satisfy $u\in H^1_0(\Omega)\cap H^2(\Omega_0\cup\Omega_1)$?\label{Q6}
\end{questions}

In the subsequent sections, we will provide affirmative answers to all of the questions raised above. In Section \ref{sec:main}, we propose the Neural Enrichment Finite Element Method (NEFEM), thereby addressing \ref{Q2} and \ref{Q3}. Within the same section, we also formulate the estimators for adaptivity, and delve into the network architecture design tailored for interface problems, providing solutions to \ref{Q4} and \ref{Q5}. Section \ref{sec:analysis} establishes the theoretical foundation by proving the reliability and efficiency of the proposed estimator, completing the answer to Question \ref{Q3}. To answer \ref{Q6}, an error analysis of the SGFEM applied to interface problems is presented in the same section. Additionally, several numerical experiments compared with standard FEM are conducted in Section \ref{sec:results}, comprehensively resolving \ref{Q1}. Finally, Section \ref{sec:conclusions} concludes this paper and outlines prospective directions for future research.

\section{Neural enrichment finite element method}
\label{sec:main}
\subsection{Model problems}
  Let $\Omega\subset\mathbb R^d$ be a bounded domain with a piecewise smooth boundary $\partial\Omega$. Denote by $W^{m, q}(\Omega)$ the usual Sobolev spaces defined in $\Omega$ with norm $\Vert\cdot\Vert_{W^{m, q}(\Omega)}$ and semi-norm $\vert\cdot\vert_{W^{m, q}(\Omega)}$. This notation will be replaced by $H^m(\Omega)$ when $q=2$, and $L^q(\Omega)$ when $m=0$. 
  
  We first introduce the problem with strong oscillations. Consider the following elliptic problem:
\begin{equation}
    -\nabla\cdot a(x)\nabla u =f \text{ in }\Omega,\quad u|_{\partial\Omega} = 0, 
    \label{model1}
\end{equation}
where $a\in W^{1,\infty}(\Omega)$ is assumed to be symmetric and positive definite with uniform upper and lower bounds; $f\in L^2(\Omega)$ is the source term. In practice, $a(x)$ and $f$ can be highly oscillatory; therefore, the solution of \eqref{model1} displays strong oscillations and a multi-scale structure. The other type of problems we consider is the interface problems. Let $\Gamma$ be an interface dividing $\Omega$ into two subdomains $\Omega_0$ and $\Omega_1$. Consider the following Laplace equation:
\begin{equation}
    -\nabla\cdot a_i\nabla u =f \text{ in }\Omega,\quad [u]=0,\quad [a\partial_{\boldsymbol{n}} u]=0 \text{ on }\Gamma,
    \label{model2}
\end{equation}
where $a_i>0$ are diffusion parameters.
The notation $[u]=u_0-u_1$ represents the jump on the interface $\Gamma$, where $u_r:=u|_{\bar\Omega_r}, r=0, 1$. $\boldsymbol{n}$ represents the outward pointing unit normal to $\Omega_0$ and $\partial_{\boldsymbol{n}}u=\boldsymbol{n}\cdot\nabla u$. Both subdomains $\Omega_r$ are assumed to have a boundary with sufficient regularity such that for smooth right-hand sides it holds for the solution of \eqref{model2} that
$u\in H^1_0(\Omega)\cap H^2(\Omega_0\cup\Omega_1)$.

The variational formulation of these two kinds of problems is to seek $u\in V=H^1_0(\Omega)$ such that
\begin{equation}
    a(u, v)=f(v), \quad \forall v\in V,
    \label{variation}
\end{equation}
where $a(u, v)=\int_\Omega a\nabla u\cdot\nabla v\,\text{d}x$, and $f(v)=\int_\Omega fv\,\text{d}x$. The energy norm is introduced as $\Vert u\Vert_E=\sqrt{a(u, u)}$, which is equivalent to $\Vert u\Vert_{H^1(\Omega)}$ on $V$. 
Moreover, the solution $u$ can be equivalently characterized as the unique minimizer of the Ritz functional
\begin{equation}
    J(u)\le J(v) := \frac{1}{2}a(v, v) - f(v) \quad \forall v\in V.
    \label{ritz}
\end{equation}
The corresponding discrete problem is to find $u_h\in V_h\subset V$ such that
\begin{equation}
    a(u_h, v_h)=f(v_h), \quad\forall v_h\in V_h.
\end{equation}

\subsection{Generalized finite element method}
Let $\mathcal T_h=\{K\}$ be an unfitted, quasi-uniform, triangle finite element mesh with mesh size $0<h<1$. For interface problems, the interface $\Gamma$ is allowed to traverse the elements, rather than being aligned with element boundaries. $I_h$ and $\mathcal E$ are the set of nodes and internal edges in $\mathcal{T}_h$, respectively. In the GFEM framework, additional basis functions are introduced to enrich the standard polynomial subspace based on the PUM. The standard linear FE hat functions $L_i, i\in I_h$, naturally form a Lipschitz partition of unity:
\begin{equation}
    \Vert L_i\Vert_{L^\infty(\Omega)}=1, \quad\Vert\nabla L_i\Vert_{L^\infty(\Omega)}\le Ch^{-1},\quad\sum_{i\in I_h}L_i\equiv1\text{ in }\Omega.
    \label{partition}
\end{equation}
The compact support of the hat function $L_i$ is denoted as patch $\omega_i$. We denote the set of enriched nodes by $I_h^{enr}\subset I_h$, and non-enriched nodes by $I_h^{std}=I_h\setminus I_h^{enr}$. The GFEM solution is formulated as follows:
\begin{equation*}
u_h = \sum_{i \in \mathcal{I}_h^{std}} u_i L_i 
+ \sum_{i \in \mathcal{I}_h^{enr}} L_i (u_i + a_i \phi_i),
\end{equation*}
   where $\phi_i$ is the enrichment function; $u_i$ and $a_i$ represent the DoF of FE bases and enrichment bases, respectively. 
   In other words, the approximation space consists the standard FE space enriched by the enrichment function:
   \begin{equation*}
       V_h = V_{FEM}\oplus V_{ENR},\qquad V_{ENR}=\operatorname{span}\{L_i\phi_i: i\in I_h^{enr}\}.
   \end{equation*}
    In this work, we employ the SGFEM, which is a stabilized variant of the GFEM designed to improve the conditioning of the resulting system and to eliminate blending effects. 
    Within the SGFEM framework, the approximation space is modified as
    \begin{equation}
    V_h = V_{\mathrm{FEM}} \oplus V_{\mathrm{ENR}},
    \qquad
    V_{\mathrm{ENR}} = \operatorname{span}\{ L_i(\phi_i - \mathcal{I}_h \phi_i) : i \in I_h^{enr} \},
    \label{subspace}
    \end{equation}
    where $\mathcal{I}_h$ denotes the standard linear interpolation operator.
\subsection{Neural enrichment functions}
   To bypass the reliance on a priori knowledge in constructing $\phi_i$, we introduce NNs $\{\mathcal{N}_i\}_{i\in I_h^{enr}}$ as adaptive enrichment functions that heuristically resolve local characteristics. For each enriched node $i \in I_h^{\mathrm{enr}}$, we choose the enrichment function in the form
   \begin{equation}
   \label{neural enrichment}
        \phi_i(x) = \mathcal{N}_i(x - p_i; \theta_i),
    \end{equation}
    where $\theta_i$ denotes trainable parameters, and $p_i$ is the coordinate of the enriched node $i$. Coordinate shifting facilitates optimization by normalizing the network inputs, and improves numerical stability by enhancing the linear independence of the enrichment functions. Let $\theta=\{\theta_i\}, i\in I_h^{enr}$ be the set of trainable parameters. The corresponding subspace can be presented as
   \begin{equation}
       V_{N}(\mathcal{T}_h, I_h^{enr};\theta):=V_{FEM}\oplus\operatorname{span}\{L_i\left[\mathcal{N}_i(\theta_i)-\mathcal{I}_h\mathcal{N}_i(\theta_i)\right]:i\in I_h^{enr}\}.
   \end{equation}
   Let $N$ and $M$ denote the numbers of DoF associated with $V_{FEM}$ and $V_{ENR}$, respectively. 
   % We introduce an ordered basis $\{\varphi_i\}_{i=1}^{N+M}$ of
   %  $V_{N}(\mathcal{T}_h, I_h^{enr};\theta)$ such that
   %  \begin{equation}
   %  V_{N}(\mathcal{T}_h, I_h^{\mathrm{enr}};\theta)
   %  =
   %  \operatorname{span}\left\{
   %  \varphi_1,\dots,\varphi_N,\,
   %  \varphi_{N+1}(\theta_1),\dots,\varphi_{N+M}(\theta_M)
   %  \right\}.
   %  \label{VNe-span}
   %  \end{equation}
    The stiffness matrix $A(\theta)$ and the load vector $F(\theta)$ are defined by
    \begin{equation*}
    A_{ij} = a(\varphi_j, \varphi_i), \quad F_{i} = f(\varphi_i)\qquad 1 \le i,j \le N+M,
    \end{equation*}
    where $\{\varphi_i\}_{i=1}^{N+M}$ is the ordered basis of $V_{N}(\mathcal{T}_h, I_h^{enr};\theta)$.
   The discrete solution $c(\theta)$ is obtained by solving the linear system $A(\theta)c(\theta)=F(\theta)$. To update trainable parameters $\theta$, replacing the solution into the Ritz functional \eqref{ritz}, we have the loss function
   \begin{equation}
           \mathcal{L}(u_h;\theta) = \frac{1}{2}c^T(\theta)A(\theta)c(\theta) - c^T(\theta)F(\theta).
   \end{equation}
   Minimizing the loss function requires the gradient $\frac{\partial\mathcal{L}}{\partial\theta}$.
   Following \cite{aballay2025r}, we can bypass this, as the chain rule reveals
      % tracking the gradient of $\theta$ with respect to $c$ is computationally expensive and challenging, particularly when employing third-party solvers that lie outside of the deep learning framework's automatic differentiation capabilities. Here, we contend that 
   \begin{equation*}
           \frac{\partial \mathcal{L}}{\partial\theta}=\frac{\partial \mathcal{L}}{\partial c}\frac{\partial c}{\partial \theta} + \frac{\partial \mathcal{L}}{\partial A}\frac{\partial A}{\partial\theta} + \frac{\partial \mathcal{L}}{\partial F}\frac{\partial F}{\partial \theta},
   \end{equation*}
   where for the first term, it holds
   \begin{equation*}
       \frac{\partial \mathcal{L}}{\partial c} = A(\theta)c(\theta) - F(\theta) = 0\quad\Rightarrow
    \quad\frac{\partial\mathcal{L}}{\partial\theta}=\frac{\partial \mathcal{L}}{\partial A}\frac{\partial A}{\partial\theta} + \frac{\partial \mathcal{L}}{\partial F}\frac{\partial F}{\partial \theta},
   \end{equation*}
   since $c(\theta)$ is the solution of the linear system. Thus, $\frac{\partial c}{\partial \theta}$ is no longer needed, implying that 
   $c$ can be treated as a constant vector independent of $\theta$. In other word, the framework maintains compatibility with various third-party solvers and legacy codes that are not integrated into deep learning libraries. This allows for the flexible use of both iterative and direct methods, with or without preconditioning. The NEFEM is outlined in Algorithm~\ref{algo:NeFEM} as follows:
\begin{algorithm}[H]
    \caption{Neural enriched finite element method}\label{algo:NeFEM}
    \nl \textbf{Initialization:} Generate the mesh $\{\mathcal{T}_h\}$ and determine the enriched points set $I_h^{enr}$. Build the neural enrichment functions $\phi_i, i\in I_h^{enr}$ and set up $V_{N}(\mathcal{T}_h, I_h^{enr};\theta)$. Choose the maximum training epochs $T$, learning rate $\gamma$ and $k=0$.
    
    \nl \While{$k\le T$}{
            \nl Assemble the stiffness matrix $A$ and the load vector $F$ on $V_{N}(\mathcal{T}_h, I_h^{enr};\theta)$.

            \nl Solve the linear system $Ac=F$ for $c$ without gradient tracking. External solvers are acceptable.

            \nl Compute the loss function $\mathcal{L}$, and update network parameters $\theta$ as follows
            $$
            \theta = \theta - \gamma\frac{\partial\mathcal{L}}{\partial\theta}=\theta-\gamma\left(\frac{\partial \mathcal{L}}{\partial A}\frac{\partial A}{\partial\theta} + \frac{\partial \mathcal{L}}{\partial F}\frac{\partial F}{\partial \theta}\right).
            $$

            \nl $k = k + 1$.
    }
    \nl Assemble the stiffness matrix $A$ and the load vector $F$ on $V_{N}(\mathcal{T}_h, I_h^{enr};\theta)$.
    
    \nl Solve the linear system $Ac=F$ for $c$.
    
    \Return $u_h$
\end{algorithm}

\subsection{Adaptivity for elliptic problems}
In practice, elliptic problems often exhibit only localized oscillations, making it unnecessary to enrich the entire computational domain. For this types of problem, we use the following residual-based estimator: 
\begin{equation}
    \eta^2_K = h_K^2\Vert f+\nabla\cdot a\nabla u_h\Vert^2_{L^2(K)} + \frac{1}{2}h_l\Vert[a\partial_{\boldsymbol{n}}u_h]\Vert^2_{L^2(l)},
\label{estimator}
\end{equation}
to select nodes that need to be enriched. Here, $\eta_K$ denotes the local residual estimator, and $l=\partial K\cap\mathcal{E}$ denotes the interior edges of element $K$. We emphasize that this estimator is also reliable and locally efficient for NEFEM (see Section~\ref{sec:analysis}). This estimator is employed as an error indicator to guide the training process. Specifically, if the estimator indicates sufficient convergence within a local region, the corresponding networks are excluded from further training, thereby accelerating the training process.
\begin{algorithm}[H]
\setcounter{AlgoLine}{0}
    \caption{Adaptive neural enriched finite element method}\label{adaptive}
    \nl \textbf{Initialization:} Generate the mesh $\{\mathcal{T}_h\}$. Set the marking parameters $\alpha_1, \alpha_2\in(0, 1)$, maximum training epochs $T$, adaptive epochs $H_1$ and $H_2$, learning rate $\gamma$ and $k=0$.
    
    \nl Find $u_h\in V_{\mathbb P_1}(\mathcal{T}_h)$ such that $a(u_h, v_h)=f(v_h)\quad\forall v_h\in V_{\mathbb P_1}(\mathcal{T}_h)$.
    
    \nl Compute the local residual estimator $\eta^2_K$ and global error estimator $\eta^2=\sum_K \eta_K^2$.
    
    \nl Construct the marked subset $\mathcal{S}\subset\mathcal{T}_h$ by the percentage marking strategy. Nodes associated with $\mathcal{S}$ are enriched and denoted as $I_h^{enr}$. Build the neural enrichment functions $\phi_i, i\in I_h^{enr}$ and set the subspace $V_{N}(\mathcal{T}_h, I_h^{enr};\theta)$.
    
    \nl \While{$k\le T$}{
            \nl Assemble the stiffness matrix $A$ and the load vector $F$ on $V_{N}(\mathcal{T}_h, I_h^{enr};\theta)$.

            \nl Solve the linear system $Ac=F$ for $c$ without gradient tracking. External solvers are acceptable.

            \nl \If{$k\ge H_1$ and $(k - H_1) \equiv 0 \pmod{H_2}$}{
                \nl Compute the local residual estimator $\eta^2_K$ and global error estimator $\eta^2$.
    
            \nl Dörfler Marking Strategy: Find the training elements $\mathcal{S}_t\subset\mathcal{S}$ such that \eqref{Dorfler}. Nodes associated with $\mathcal{S}_t$ are denoted as $I_t$.
            }

            \nl Compute the loss function $\mathcal{L}$, and update network parameters $\theta_{I_t}$ as follows
            $$
            \theta_{I_t} = \theta_{I_t} - \gamma\frac{\partial\mathcal{L}}{\partial\theta_{I_t}}=\theta_{I_t}-\gamma\left(\frac{\partial \mathcal{L}}{\partial A}\frac{\partial A}{\partial\theta_{I_t}} + \frac{\partial \mathcal{L}}{\partial F}\frac{\partial F}{\partial \theta_{I_t}}\right).
            $$

            \nl $k = k + 1$.
    }
    \nl Assemble the stiffness matrix $A$ and the load vector $F$ on $V_{N}(\mathcal{T}_h, I_h^{enr};\theta)$.
    
    \nl Solve the linear system $Ac=F$ for $c$.
    
    \Return $u_h$
\end{algorithm}

Regarding the marking strategy, we employ a percentage-based marking for node selection and the Dörfler marking for the training process. Specifically, we construct the marked subset $\mathcal{S} \subset \mathcal{T}_h$ by marking the top $\alpha_1\in(0, 1)$ of elements with the largest error indicators. We then define $I^s \subset I_h^{\mathrm{enr}}$ as the set of enriched nodes associated with the elements in $\mathcal{S}$; the enrichment functions corresponding to these nodes are then actively trained. For the Dörfler marking strategy used in the training process, we seek a subset $\mathcal{S}_t \subset \mathcal{T}_h$ of minimal cardinality that satisfies
\begin{equation}
    \sum_{K \in \mathcal{S}_t} \eta_K^2 \ge \alpha_2 \sum_{K\in\mathcal{T}_h}\eta_K^2,
    \label{Dorfler}
\end{equation}
where $\alpha_2 \in (0,1)$ is a prescribed parameter. 
% In practice, we achieve this by sorting the elements in descending order of $\eta_K$ and greedily collecting those with the highest errors into $\mathcal{S}_t$ until the condition (\ref{Dorfler}) is reached. 
The adaptive NEFEM is outlined in Algorithm~\ref{adaptive}, which is reminiscent of $p-$adaptivity in FEM.
\subsection{Network architectures for interface problems}
Interface problems feature a flux discontinuity across the interface. Using smooth activation function, such as Tanh or Sigmoid, to express this flux discontinuity is generally difficult.  While non-smooth activation functions, for example ReLU, naturally contain this discontinuity, they introduce a more severe issue: the position of the discontinuity may shift during training process. This mobility adversely effects the integration accuracy, as sub-element quadrature rules are typically pre-aligned with the physical interface.

To address this issue, we retain the use of smooth activation functions but augment the input space with an additional interface-aware variable. Specifically, we define the enrichment function as
\begin{equation}
    \phi_i({x}, D({x})) = \mathcal{N}_i({x}-{p}_i, D({x});\theta_i),
    \label{interface_neural}
\end{equation}
where $D(x)$ is a quasi-distance function associated with the interface $\Gamma$ and satisfies
\begin{equation}
        (1)\; D\in W^{2, \infty}(\bar\Omega_r), r=0, 1 \qquad 
        (2)\; [D] = 0\qquad 
        (3)\; 0<\kappa_1\le\left\vert[\partial_nD]\right\vert\le\kappa_2.
        \label{quasi}
\end{equation}
Conditions \eqref{quasi} are also called general enrichment conditions for interface problems \cite{wang2025general}. 
% Although the neural network $\mathcal{N}_i$ employs smooth activation functions, the composition with $D(\boldsymbol{x})$ introduces a discontinuity in the normal derivative of $\phi_i$ across the interface. In fact, 
By chain rule we have
\begin{equation*}
    \nabla \phi_i = \partial_x \mathcal{N}_i + \partial_D\mathcal{N}_i \, \nabla D .
\end{equation*}
From the third condition in \eqref{quasi}, the second term induces a flux discontinuity for $\phi_i$. Importantly, this kink is anchored through $D(x)$ and therefore does not shift during the training process. The network structure is illustrated in Figure \ref{structure}.
\begin{figure}[htbp]
\centering
\begin{tikzpicture}[
    scale=0.65, transform shape,
    neuron/.style={circle, draw, thick, minimum size=1.2cm, inner sep=0pt, font=\large},
    link/.style={->, >=stealth, thick, draw=black!70}
]

    \node[neuron] (I-1) at (0, 1.8) {$x-p_{ix}$};
    \node[neuron] (I-2) at (0, 0) {$y-p_{iy}$};
    \node[neuron, fill=black!10] (I-3) at (0, -1.8) {$D(x, y)$};

    \node[neuron] (H1-1) at (2, 2.25) {$\sigma$};
    \node[neuron] (H1-2) at (2, 0.75) {$\sigma$};
    \node[neuron] (H1-3) at (2, -0.75) {$\sigma$};
    \node[neuron] (H1-4) at (2, -2.25) {$\sigma$};

    \node[neuron] (H2-1) at (4, 2.25) {$\sigma$};
    \node[neuron] (H2-2) at (4, 0.75) {$\sigma$};
    \node[neuron] (H2-3) at (4, -0.75) {$\sigma$};
    \node[neuron] (H2-4) at (4, -2.25) {$\sigma$};

    \node[neuron] (O-1) at (6, 0) {$\phi_i$};

    \foreach \i in {1, 2, 3} {
        \foreach \j in {1, 2, 3, 4} {
            \draw[link] (I-\i) -- (H1-\j);
        }
    }
    
    \foreach \i in {1, 2, 3, 4} {
        \foreach \j in {1, 2, 3, 4} {
            \draw[link] (H1-\i) -- (H2-\j);
        }
    }
    
    \foreach \j in {1, 2, 3, 4} {
        \draw[link] (H2-\j) -- (O-1);
    }

    \node at (0, 3.5) {\textbf{Input}};
    \node at (3, 3.5) {\textbf{Hidden}};
    \node at (6, 3.5) {\textbf{Output}};

    \node at (-3.5, -1.8) {\textbf{(For interface problems)}};
\end{tikzpicture}
\caption{Illustration of the network structure for each neural enrichment function $\phi_i$.}
\label{structure}
\end{figure}
\begin{remark}
    Condition~\eqref{quasi} is satisfied by a wide range of commonly used interface representations. Typical examples include the distance function $\operatorname{dist}(x, \Gamma)$, and the absolute value of a level-set function $|\varphi|$.
\end{remark}
% Conditions \eqref{quasi} are also called general enrichment conditions for interface problems \cite{}. For enrichment functions satisfying \eqref{quasi}, SGFEM achieves an $\mathcal{O}(h)$ convergence rate in the energy norm for interface problems. In the next section, we investigate this condition under weaker regularity assumptions than those commonly imposed in the existing literature\cite{}.

\section{Numerical analysis}
\label{sec:analysis}
% In this section, we present the rigorous theoretical analysis of NEFEM. First, the reliability and efficiency of the estimator \eqref{estimator} are investigated. Furthermore, we establish error estimates for the approximations employing enrichment functions that satisfy~\eqref{quasi} for interface problems, notably achieving this under weaker regularity assumptions compared to existing works.
\subsection{Estimator}
\label{sec:error estimates}
Algorithm~\ref{adaptive} utilizes the expression in \eqref{estimator} as an \textit{a posteriori} error indicator to evaluate the NEFEM approximation. Although this estimator is well-documented in standard FEM literature \cite{chamoin2023introductory}, its theoretical validity for neural enrichment requires formal justification. To this end, the subsequent theorem proves that the estimator remains reliable when applied to NEFEM.
\begin{theorem}[Reliability]
    Let $u$ be the solution of \eqref{model1}; $u_h\in V_{N}(\mathcal{T}_h, I_h^{enr}; \theta)$ be the approximation solution. Let $\eta_K$ be the estimator defined in \eqref{estimator}. Then, there exists a positive constant $C$ such that
    \begin{equation}\label{reliability}
        \Vert u-u_h\Vert^2_E\le C\sum_{K\in\mathcal{T}_h}\eta_K^2.
    \end{equation}
\end{theorem}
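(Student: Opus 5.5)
The argument follows the classical residual-based reliability proof, with one adaptation: the enriched space $V_N(\mathcal{T}_h, I_h^{enr};\theta)$ contains the standard $\mathbb P_1$ space $V_{FEM}$. The key consequence is Galerkin orthogonality against the piecewise linears. Since $u_h$ solves the discrete problem on $V_N\supset V_{FEM}$, we have
\[
a(u-u_h, v_h)=0\qquad \forall v_h\in V_{FEM}.
\]
This is the only property of the discrete space that will be used, so the structure of the neural enrichment never enters except through regularity of $u_h$.

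Set $e=u-u_h\in V$. By coercivity, $\Vert e\Vert_E^2=a(e,e)=a(e,e-I_he)$, where $I_h$ is a Clément (or Scott--Zhang) quasi-interpolant onto $V_{FEM}$ preserving the homogeneous boundary condition. Writing $a(e,v)=f(v)-a(u_h,v)$ with $v=e-I_he$, we integrate by parts elementwise. This needs $u_h$ to be piecewise $H^2$ on each $K$ and $a\nabla u_h\in H(\mathrm{div},K)$. That holds because $a\in W^{1,\infty}$, the hat functions $L_i$ are linear on $K$, and the networks $\mathcal N_i$ use smooth activations, so $L_i(\mathcal N_i-\mathcal I_h\mathcal N_i)$ is smooth on each element and globally continuous. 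We obtain
\[
a(e,v)=\sum_K\int_K (f+\nabla\cdot a\nabla u_h)\,v\,\mathrm dx-\sum_{l\in\mathcal E}\int_l [a\partial_{\boldsymbol n}u_h]\,v\,\mathrm ds,
\]
with boundary edges absent since $v|_{\partial\Omega}=0$. Each interior edge appears from two neighbouring elements, which matches the factor $\frac12$ in \eqref{estimator}.

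We then apply Cauchy--Schwarz together with the standard Clément estimates on a shape-regular, quasi-uniform mesh:
\[
\Vert v\Vert_{L^2(K)}\le Ch_K|e|_{H^1(\tilde\omega_K)},\qquad \Vert v\Vert_{L^2(l)}\le Ch_l^{1/2}|e|_{H^1(\tilde\omega_l)}.
\]
Summing, and using the finite overlap of the patches $\tilde\omega_K$ and the equivalence of $\Vert\cdot\Vert_E$ with the $H^1$ seminorm on $V$ (uniform bounds on $a$), gives
\[
\Vert e\Vert_E^2\le C\Big(\sum_K\eta_K^2\Big)^{1/2}\Vert e\Vert_E .
\]
Dividing by $\Vert e\Vert_E$ and squaring yields \eqref{reliability}.

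The main obstacle is justifying the elementwise integration by parts for the nonpolynomial functions in $V_N$. Concretely, one must check that $\nabla\cdot(a\nabla u_h)|_K\in L^2(K)$ and that the edge traces of $a\partial_{\boldsymbol n}u_h$ are in $L^2(l)$. The constant $C$ must also be shown independent of $\theta$. This holds because $\theta$ enters only through the estimator quantities themselves and not through the interpolation constants, which depend solely on the mesh.
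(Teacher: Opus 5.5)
Your proposal is correct and follows essentially the same route as the paper: Galerkin orthogonality tested with the Clément interpolant $\Pi_h e\in\mathbb P_1\subset V_N$, elementwise integration by parts of the residual, then Cauchy--Schwarz with the Clément approximation bounds. Your version is slightly cleaner in two details: the edge term carries the full jump once per edge, and the edge bound is stated as $|e|_{H^1(\tilde\omega_l)}$ on a patch. One small correction: the step $a(e,e)=a(e,e-I_he)$ that you attribute to coercivity is actually Galerkin orthogonality.
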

\begin{proof}
    Let $e=u-u_h$ be the approximation error. Denote by $R(v)=f(v)-a(u_h, v)$ the residual functional defined on $H^{-1}(\Omega)$. From direct derivations we have the following relation
    \begin{equation}
        a(e, v) = a(u-u_h, v) = f(v) - a(u_h, v) = R(v) \quad \forall v\in V.
        \label{star}
    \end{equation}
    Together with the Galerkin orthogonality,
    \begin{equation}
        \Vert e\Vert_E^2=a(e, e) = a(e, e-v_h)=R(e-v_h), \quad \forall v_h\in V_{Ne}(\mathcal{T}_h, I_h^{enr}; \theta).
        \label{by part}
    \end{equation}
    On the other hand, applying integration by parts yields
    \begin{equation}
    \label{weak form}
        R(v) = \sum_{K\in\mathcal{T}_h}(f + \nabla\cdot a\nabla u_h, v)+\sum_{l\in\mathcal{E}}\frac{1}{2}\langle[a\partial_{\boldsymbol{n}}u_h],v\rangle.
    \end{equation}
    Here, $\langle \cdot, \cdot\rangle$ denotes the $L^2$ inner product on the trace domain. Note that $\mathbb{P}_1\subset V_{N}(\mathcal{T}_h, I_h^{enr}; \theta)$. Set $v_h=\Pi_h e$, where $\Pi_h$ is the Clément interpolate operator. Combining the approximation results of the Clément interpolation, the Cauchy-Schwartz inequality, \eqref{by part} and \eqref{weak form}, the following estimates hold:
    \begin{equation*}
    \begin{aligned}
        &\Vert e\Vert_E^2 =\sum_{K\in\mathcal{T}_h}(f + \nabla\cdot a\nabla u_h, e-\Pi_he)+\sum_{l\in\mathcal{E}}\frac{1}{2}\langle[a\partial_{\boldsymbol{n}}u_h],e-\Pi_he\rangle\\
        &\le\sum_{K\in\mathcal{T}_h}Ch_K\Vert\nabla e\Vert_{L^2(K)}\Vert f + \nabla\cdot a\nabla u_h\Vert_{L^2(K)}+\frac{1}{2}\sum_{l\in\mathcal{E}}Ch_l^{1/2}\Vert\nabla e\Vert_{L^2(l)}\Vert[a\partial_{\boldsymbol{n}}u_h]\Vert_{L^2(l)}\\
        &\le C\left(\sum_{K\in\mathcal{T}_h} h_k^2\Vert f+\nabla\cdot a\nabla u_h\Vert^2_{L^2(K)} + \sum_{l\in\mathcal{E}}\frac{1}{2}h_l\Vert[a\partial_{\boldsymbol{n}}u_h]\Vert^2_{L^2(l)}\right)^{1/2}\Vert e\Vert_E.
        \end{aligned}
    \end{equation*}
    Then, \eqref{reliability} holds from direct computation.
\end{proof}

Let $\phi_j$, $j=1,\dots,M,$ be enrichment functions associated with an element $K\subset\bar{\Omega}$. We define the local enrichment function as $r^j_{h, K}:=(\phi_j-\mathcal{I}_h\phi_j)|_K$, and the corresponding local space as 
$$V_N(K):=\mathbb{P}_1(K) \oplus \operatorname{span}\{L_jr^j_{h, K}\}_{j=1}^{M}.$$ 
By mapping $\hat{V}_{h, K}$ from physical elements to the reference element $\hat K$, we obtain the reference space $V_N(\hat{K}):=\mathbb P_1(\hat{K}) \oplus \operatorname{span}\{\hat{L}_j\hat{r}^j_{h, K}\}_{j=1}^M$. To establish the local effectivity of the estimator, we impose the following assumption.
\begin{assumption}
\label{assumption}
   For all mesh size $h$ and element $K$, let the basis $\{\hat{L}_j\hat{r}^j_{h, K}\}$ be uniformly linearly independent. That is, there exists a constant $C_G>0$, independent of $h$ and $K$, such that
   $$
   \alpha^TG_{h, K}\alpha\ge C_G|\alpha|^2,\quad\forall a\in\mathbb{R}^M,
   $$
   where
   $$G_{h, K}:=((\hat{L}_i\hat{r}^i_{h, K}, \hat{L}_j\hat{r}^j_{h, K})_{L^2(\hat{K})})^M_{i, j=1}$$
   is the $L^2-$Gram matrix.
\end{assumption}
\begin{remark}
        From a numerical perspective, if two enrichment functions exhibit strong linear dependence, one of them becomes redundant and is subsequently removed from the set. Moreover, the definition of enrichment functions in \eqref{neural enrichment} alleviates the issue of linear dependence to some extent.
\end{remark}
\begin{lemma}[Inverse inequality]
\label{Inverse inequality}
Suppose that all enrichment functions $\phi_j$ associated with an element $K\subset\bar{\Omega}$ satisfy Assumption~\ref{assumption}. Then, there exists a constant $C>0$ independent of $h$ and $K$ such that 
\begin{equation}
    \Vert \nabla v_h\Vert_{L^2(K)}\le Ch^{-1}\Vert v_h\Vert_{L^2(K)}
    \label{inverse}
\end{equation}
for all $v_h\in V_N(K)$.
\end{lemma}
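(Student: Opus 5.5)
The plan is the standard scaling argument: pull $v_h$ back to the reference element $\hat K$ via the affine map $F_K:\hat K\to K$, prove an inverse estimate on $V_N(\hat K)$ with an $h$-independent constant, and push it forward. Because $\mathcal T_h$ is quasi-uniform, $\Vert\nabla v_h\Vert_{L^2(K)}\le Ch^{-1}h^{d/2}\Vert\hat\nabla\hat v_h\Vert_{L^2(\hat K)}$ and $\Vert v_h\Vert_{L^2(K)}\ge c\,h^{d/2}\Vert\hat v_h\Vert_{L^2(\hat K)}$. So \eqref{inverse} reduces to
\[
\Vert\hat\nabla\hat v\Vert_{L^2(\hat K)}\le C\Vert\hat v\Vert_{L^2(\hat K)}\qquad\forall\hat v\in V_N(\hat K),
\]
with $C$ independent of $h$ and $K$. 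For a fixed $h$ and $K$ this is immediate, since $V_N(\hat K)$ is finite dimensional and all norms are equivalent on it. The entire difficulty is uniformity: the space $V_N(\hat K)$ changes with $h$, $K$ and the trained parameters $\theta$, so compactness of a single space cannot be used.

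To obtain uniformity I will write $\hat v=\hat p+\sum_{j=1}^M\alpha_j\hat L_j\hat r^j_{h,K}$ with $\hat p\in\mathbb P_1(\hat K)$ and estimate the two parts separately. For the enrichment part, the Cauchy--Schwarz inequality gives
\[
\Big\Vert\hat\nabla\sum_j\alpha_j\hat L_j\hat r^j_{h,K}\Big\Vert_{L^2(\hat K)}\le\Big(\sum_j\Vert\hat\nabla(\hat L_j\hat r^j_{h,K})\Vert^2_{L^2(\hat K)}\Big)^{1/2}|\alpha|.
\]
Assumption~\ref{assumption} then gives $|\alpha|^2\le C_G^{-1}\alpha^TG_{h,K}\alpha=C_G^{-1}\Vert\sum_j\alpha_j\hat L_j\hat r^j_{h,K}\Vert^2_{L^2(\hat K)}$. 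What remains is a uniform bound $\Vert\hat\nabla(\hat L_j\hat r^j_{h,K})\Vert_{L^2(\hat K)}\le C$. This should follow from $\Vert\hat L_j\Vert_{W^{1,\infty}}\le C$ together with an interpolation estimate on $\hat K$ of the form $\Vert\hat r^j\Vert_{H^1(\hat K)}\le C|\hat\phi_j|_{H^2(\hat K)}$. The enrichment functions are smooth networks (or piecewise $W^{2,\infty}$ through $D$ in the interface case), and their pulled-back second derivatives scale like $h^2|\phi_j|_{W^{2,\infty}(K)}$, so this bound holds for $h<1$ with $\phi_j$ fixed. If necessary I will state this as a mild additional bound on the enrichment functions, implicit in the setting. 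For the $\mathbb P_1$ part, the inequality $\Vert\hat\nabla\hat p\Vert\le C\Vert\hat p\Vert$ holds on the fixed space $\mathbb P_1(\hat K)$.

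The main obstacle is recombining the two parts. The bounds above control $\Vert\hat\nabla\hat v\Vert$ by $\Vert\hat p\Vert+\Vert\hat v-\hat p\Vert$, but I still need $\Vert\hat p\Vert+\Vert\hat v-\hat p\Vert\le C\Vert\hat v\Vert$. That requires a uniform strengthened Cauchy--Schwarz (angle) condition between $\mathbb P_1(\hat K)$ and the span of the enrichments. My first attempt will use the SGFEM structure: $\hat r^j_{h,K}$ vanishes at the vertices of $\hat K$, so the vertex values of $\hat v$ coincide with those of $\hat p$. I would then try to bound $\Vert\hat p\Vert$ through these nodal values by a local averaging argument. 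If that fails, I will interpret Assumption~\ref{assumption} as applying to the full basis $\{\hat L_i\}\cup\{\hat L_j\hat r^j_{h,K}\}$ of $V_N(\hat K)$. With that reading, the Gram matrix argument of the previous step applies directly to all coefficients at once, and the estimate follows immediately.
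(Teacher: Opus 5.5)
There is a genuine gap. Your scaling reduction and your Gram-matrix bound on the enrichment part are exactly the paper's Step 1, and you correctly locate the real difficulty, but the proposal stops there, so the lemma is not proved.

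The missing ingredient is a uniform separation of $\mathbb P_1(\hat K)$ from $R_{h,K}:=\operatorname{span}\{\hat L_j\hat r^j_{h,K}\}$. You need a $\delta>0$, independent of $h$ and $K$, with $\operatorname{dist}_{L^2}(w,\mathbb P_1(\hat K))\ge\delta\Vert w\Vert_{L^2(\hat K)}$ for all $w\in R_{h,K}$. Assumption~\ref{assumption} only controls the Gram matrix of the enrichments among themselves, not their angle with $\mathbb P_1(\hat K)$, so this must be proved.

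The paper proves it by contradiction. Take $w_n\in R_{h_n,K_n}$ with $\Vert w_n\Vert_{L^2(\hat K)}=1$ and $\operatorname{dist}_{L^2}(w_n,\mathbb P_1(\hat K))\to0$. The Gram bound gives $|\alpha^{(n)}|\le C_G^{-1/2}$. The uniform $H^2(\hat K)$ bound on the $\hat r^j_{h,K}$ then makes $(w_n)$ bounded in $H^2(\hat K)$, hence precompact in $H^1(\hat K)$ and, for $d\le3$, in $C^0(\bar{\hat K})$. A limit $w_*$ lies in the closed space $\mathbb P_1(\hat K)$ and has unit $L^2$ norm. It also vanishes at the vertices, because every $w_n$ does, so $w_*=0$, a contradiction. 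With $\delta$ in hand, writing $v=p+w$ gives $\Vert v\Vert\ge\operatorname{dist}_{L^2}(w,\mathbb P_1(\hat K))\ge\delta\Vert w\Vert$ and $\Vert p\Vert\le(1+\delta^{-1})\Vert v\Vert$. That is exactly the recombination you could not close.

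Neither of your fallbacks supplies this step.

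\emph{The nodal-value idea.} It uses the right structural fact (vertex vanishing), but as stated it is circular. Bounding the vertex values of $\hat v$ by $\Vert\hat v\Vert_{L^2(\hat K)}$, uniformly over the varying spaces $V_N(\hat K)$, is itself an $L^\infty$--$L^2$ inverse estimate of the kind you are proving. Moreover, the only regularity you record for the enrichments is an $H^1$ bound, which does not control point values when $d\ge2$.

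It can be rescued with the uniform $H^2$ bound, which the same interpolation and scaling argument you sketch also gives:
\begin{itemize}
\item First, $\Vert w\Vert_{H^2(\hat K)}\le C|\alpha|\le C\Vert w\Vert_{L^2(\hat K)}$.
\item Next, the embedding $H^2\hookrightarrow C^{0,\gamma}$ ($d\le3$) and vertex vanishing give $|w|\le C\rho^\gamma\Vert w\Vert_{L^2(\hat K)}$ within distance $\rho$ of every vertex.
\item Finally, average $p=v-w$ over these neighborhoods of all vertices and absorb the $w$-term for a fixed small $\rho$. This yields $\Vert p\Vert\le C\Vert v\Vert$.
\end{itemize}
This would be a constructive alternative to the paper's compactness step, but you would still have to carry it out.

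\emph{The reinterpretation of the assumption.} Reading Assumption~\ref{assumption} as a Gram condition on the full basis of $V_N(\hat K)$ replaces the hypothesis with a stronger one. Deriving that stronger statement from the stated assumption is precisely the content of the paper's Step 2.
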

\begin{proof}
    We employ a scaling argument by considering the function space $\hat{V}_{h, K}$ defined on the reference element. The subsequent proof is structured into three steps.

    (Step 1) First, we show the inverse estimate in $R_{h, K}:=\operatorname{span}\{\hat{L}_j\hat{r}^j_{h, K}\}_{j=1}^M$. For any $w\in R_{h, K}$, we can write $w=\sum_{i=1}^Ma_i\hat{L}_i\hat{r}^i_{h, K}.$ It follows from Assumption \ref{assumption} that 
    $$
    \Vert w\Vert^2_{L^2(\hat K)}=\sum^M_{i, j=1}\alpha_i\alpha_j(\hat{L}_i\hat{r}^i_{h, K}, \hat{L}_j\hat{r}^j_{h, K})_{L^2(\hat{K})}=\alpha^TG_{h,K}\alpha\ge C_G|\alpha|^2.
    $$
    This implies 
    \begin{equation}
        |\alpha|\le C_G^{-1/2}\Vert w\Vert_{L^2(\hat K)}.
        \label{|a|}
    \end{equation}
    On the other hand, from the interpolation error we have
    $$
    \Vert\hat{r}^j_{h, K}\Vert_{H^2(\hat{K})}\le C\vert\phi_j\vert_{H^2(\hat K)}\le C h^{2-d/2}_K\vert\phi_j\vert_{H^2(\Omega)}.
    $$
    Therefore, $\mathcal{F}^j:=\{\hat{r}^j_{h, K}\}$ is uniformly bounded in $H^2(\hat{K})$. Combining \eqref{|a|} yields
    \begin{equation}
            \Vert w\Vert_{H^1(\hat{K})}\le\sum^M_{i=1}|\alpha_i|\Vert \hat{L}_i\Vert_{H^1(\hat K)}\Vert \hat{r}^i_{h, K}\Vert_{H^1(\hat K)}\le C\sqrt{M}|\alpha|\le C_R\Vert w\Vert_{L^2(\hat{K})}.
            \label{inverse_Rn}
    \end{equation}
    
    (Step 2) Next, we show that $R_{h, K}$ and $\mathbb{P}_1(\hat K)$ are uniformly separate. That is, there exists $\delta>0$ independent of $h$ and $K$, such that
    $$
    \operatorname{dist}_{L^2}(w, \mathbb{P}_1(\hat K))\ge\delta\Vert w\Vert_{L^2(\hat{K})}, \quad\forall w\in R_{h, K}.$$ 
    If not, there exists $w_n\in R_{h_n, K_n}$ such that $\Vert w_n\Vert_{L^2(\hat{K})}=1$ but 
    $\operatorname{dist}_{L^2}(w_n, \mathbb{P}_1(\hat K))\to 0.$
    Since the embedding $H^2(\hat K)\hookrightarrow H^1(\hat K)$ is compact, $\mathcal{F}=\prod_{j=1}^M\mathcal{F}^j$ is therefore relatively compact in $H^1(\hat K)$. We extract a convergent subsequence such that $w_n\to w_*$ in $H^1(\hat{K})$. On the other hand, we extract  $p_n\in\mathbb{P}_1(\hat K)$ such that $\Vert w_n-p_n\Vert_{L^2(\hat K)}\to 0.$ Therefore, $p_n\to w_*$ in $L^2(\hat{K}).$ As $\mathbb{P}_1(\hat K)$ is closed, $w_*\in\mathbb{P}_1(\hat K)$. Note that all functions in $R_{h, K}$ vanish at vertices of $\hat K$. From the Sobolev embedding theorem, $w_*$ vanishes at each vertex of $\hat{K}$ when dimension $d\le3$. We deduce that $w_* \equiv 0$, which contradicts the condition $\Vert w_*\Vert_{L^2(\hat K)}=1$.

    (Step 3) Finally, we show the inverse inequality \eqref{inverse}. For any $h, K$ and $v\in \hat{V}_{h, K}$, we write it as $v = p+w$, where $p\in\mathbb{P}_1(\hat{K})$ and $w\in R_{h, K}$. From step 2,
    \begin{equation}
    \Vert v\Vert_{L^2(\hat{K})}=\Vert p+w\Vert_{L^2(\hat K)}\ge\operatorname{dist}_{L^2}(w, \mathbb{P}_1(\hat{K}))\ge\delta\Vert w\Vert_{L^2(\hat{K})}.
    \label{w_v}
    \end{equation}
    Thus, together with the norm equivalency result in $\mathbb{P}_1(\hat{K})$, we have
    \begin{equation}
        \Vert p\Vert_{H^1(\hat K)}\le C_P(1+\delta^{-1})\Vert v\Vert_{L^2(\hat K)}.
        \label{p_in}
    \end{equation}
    Ultimately, combining \eqref{inverse_Rn}, \eqref{w_v} and \eqref{p_in} we yield
    \begin{equation}
        \Vert \nabla v\Vert_{L^2(\hat K)}\le C\Vert v\Vert_{L^2(\hat{K})}.
    \end{equation}
    By scaling argument, one immediately obtains \eqref{inverse}.
\end{proof}
\begin{remark}
    The purpose of Assumption \ref{assumption} is to ensure that the enrichment space $R_{h, K}$ does not collapse into $\mathbb{P}_1(\hat{K)}$. In the case of single global enrichment function $\phi$, which is standard practice in classical SGFEM literature, the situation simplifies. To maintain the flow of the presentation, we provide a more straightforward assumption, and derive the inverse estimate for this case in the Appendix.
\end{remark}

To establish the efficiency result, we follow Verfürth's argument \cite{verfurth1994posteriori}. We start by introducing a bubble function $b_K$ such that $b_K\in H_0^1(K)$ with $b_K>0$ on $\mathring{K}$. As a consequence of Lemma~\ref{Inverse inequality}, the following inverse estimates hold.
\begin{lemma}
     Let $b_K$ be the bubble function defined on $K$. Suppose that all enrichment functions $\phi_j$ associated with an element $K\subset\bar{\Omega}$ satisfy Assumption~\ref{assumption}. There exists constants $C>0$ independent of $h$ and $K$ such that
    \begin{align}
        \Vert v_h\Vert^2_{L^2(K)}&\le C\int_Kv_h^2b_K\,\text{d}x \quad &&\text{(Norm equivalency)}\label{norm equivalency},\\
        \Vert \nabla(b_Kv_h)\Vert_{L^2(K)}&\le Ch^{-1}\Vert v_h\Vert_{L^2(K)} \quad &&\text{(Inverse inequality)},\label{inverse}
    \end{align}
for all $v_h\in V_N(K)$.
\end{lemma}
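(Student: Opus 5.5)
We work on the reference element $\hat K$ and transfer the estimates by the affine map $F_K:\hat K\to K$. We assume the standard bubble $b_K=\hat b\circ F_K^{-1}$, where $\hat b$ is a fixed multiple of the product of the barycentric coordinates, so that $0\le b_K\le 1$, $\hat b>0$ in the interior of $\hat K$, and $\Vert\nabla b_K\Vert_{L^\infty(K)}\le Ch^{-1}$. Since $\int_K v_h^2b_K\,\mathrm dx=|\det DF_K|\int_{\hat K}\hat v^2\hat b\,\mathrm d\hat x$ and $\Vert v_h\Vert^2_{L^2(K)}=|\det DF_K|\,\Vert\hat v\Vert^2_{L^2(\hat K)}$, the Jacobian factors cancel. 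Hence \eqref{norm equivalency} reduces to a uniform estimate on the reference space $V_N(\hat K)$: there exists $C$ independent of $h$ and $K$ with
\begin{equation*}
\Vert \hat v\Vert^2_{L^2(\hat K)}\le C\int_{\hat K}\hat v^2\hat b\,\mathrm d\hat x\qquad\forall \hat v\in V_N(\hat K).
\end{equation*}

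For a single fixed finite-dimensional space this would follow from norm equivalence, because $\hat v\mapsto(\int\hat v^2\hat b)^{1/2}$ is a norm there. The main obstacle is uniformity, since the space $V_N(\hat K)$ changes with $h$ and $K$. We handle this with a compactness argument reusing the three steps in the proof of Lemma~\ref{Inverse inequality}. Suppose the estimate fails. Then there are $\hat v_n\in V_{N}(\hat K)$, belonging to spaces $(h_n,K_n)$, with $\Vert\hat v_n\Vert_{L^2(\hat K)}=1$ and $\int\hat v_n^2\hat b\to0$.

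We split $\hat v_n=p_n+w_n$ as in Step~3. By \eqref{w_v}, $\Vert w_n\Vert_{L^2}\le\delta^{-1}$, and by \eqref{p_in} the $p_n$ are bounded in the finite-dimensional space $\mathbb P_1(\hat K)$. Writing $w_n=\sum_i\alpha^{(n)}_i\hat L_i\hat r^i_{h_n,K_n}$, estimate \eqref{|a|} bounds $|\alpha^{(n)}|$ uniformly. Step~1 shows that the $\hat r^i$ are uniformly bounded in $H^2(\hat K)$, and the $\hat L_i$ are fixed affine functions. Therefore $w_n$ is bounded in $H^2(\hat K)$. By compactness of $H^2(\hat K)\hookrightarrow H^1(\hat K)$, together with finite dimensionality of $\mathbb P_1$, a subsequence satisfies $\hat v_n\to v_*$ in $L^2(\hat K)$. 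In fact, since $d\le3$, the embedding $H^2\hookrightarrow C^0$ is compact and the convergence is even uniform.

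Passing to the limit, $\Vert v_*\Vert_{L^2(\hat K)}=1$ and, because $\hat b$ is bounded, $\int_{\hat K}v_*^2\hat b=\lim\int\hat v_n^2\hat b=0$. Since $\hat b>0$ on the interior of $\hat K$, this forces $v_*=0$ a.e., a contradiction. Scaling back then gives \eqref{norm equivalency}.

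The second estimate in the lemma is more direct. By the product rule,
\begin{equation*}
\Vert\nabla(b_Kv_h)\Vert_{L^2(K)}\le\Vert\nabla b_K\Vert_{L^\infty(K)}\Vert v_h\Vert_{L^2(K)}+\Vert b_K\Vert_{L^\infty(K)}\Vert\nabla v_h\Vert_{L^2(K)}.
\end{equation*}
The first term is bounded by $Ch^{-1}\Vert v_h\Vert_{L^2(K)}$ using the properties of the bubble. For the second term we use $\Vert b_K\Vert_{L^\infty(K)}\le1$ and apply Lemma~\ref{Inverse inequality} to $v_h\in V_N(K)$, which gives $\Vert\nabla v_h\Vert_{L^2(K)}\le Ch^{-1}\Vert v_h\Vert_{L^2(K)}$. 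Summing the two bounds proves the second estimate. Quasi-uniformity of $\mathcal T_h$ lets us write $h$ in place of $h_K$ throughout.
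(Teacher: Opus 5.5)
Your proposal is correct and follows essentially the paper's proof. For the norm equivalence it uses a contradiction argument on the reference element (a normalized sequence with $\int\hat v_n^2\hat b\to0$, extraction of an $L^2$-convergent subsequence, and positivity of the bubble in the interior), and for the second estimate it uses the product rule combined with Lemma~\ref{Inverse inequality}.

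The only difference is cosmetic. You obtain compactness by reopening Steps~1--3 of Lemma~\ref{Inverse inequality} to get an $H^2$ bound on the enrichment part. The paper instead uses the uniform reference-element $H^1$ bound that this inverse inequality already provides, which is a slightly shorter route to the same Rellich argument.
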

\begin{proof}
(1) We employ a scaling argument, and show that there exists a constant $C$ independent of $h$ and $K$ such that
\begin{equation}
\Vert \hat{v}_h\Vert^2_{L^2(\hat K)}\le C\int_{\hat K}\hat{v}_h^2b_{\hat K}\,\text{d}x.
\end{equation}
If not, there exists $v_n\in\hat{V}_{h_n, K_n}$ such that $\Vert v_n\Vert_{L^2(\hat K)}=1$ but $\Vert b^{1/2}_{\hat K}v_n\Vert_{L^2(\hat K)}\to 0$. From the inverse inequality Lemma \ref{Inverse inequality}, $\{v_n\}$ is uniformly bounded in $H^1(\hat{K})$, implying that there exists a subsequence such that $v_{n_k}\to v$ in $L^2(\hat K)$. As $b^{1/2}_{\hat K}\in L^\infty(\hat K)$, $b^{1/2}_{\hat K}v_{n_k}\to b^{1/2}_{\hat K}v$ in $L^2(\hat K)$. Thus, $v=0$  a.e. on $K$, which contradicts the condition $\Vert v\Vert_{L^2(\hat K)}=1$.

(2) From the inverse inequality Lemma \ref{Inverse inequality} and the Hölder inequality, we have
\begin{align*}
    \Vert \nabla(b_Kv_h)\Vert_{L^2(K)}\le\Vert\nabla b_K\Vert_{L^\infty}\Vert v_h\Vert_{L^2(K)} + \Vert b_K\Vert_{L^\infty}\Vert\nabla v_h\Vert_{L^2(K)}
    \le Ch^{-1}\Vert v_h\Vert_{L^2(K)}.
\end{align*}
\end{proof}

Analogous to Lemma \ref{Inverse inequality}, to handle the flux jump terms on element edges, our goal is to establish an inverse estimate of the following form
\begin{equation*}
    \Vert \nabla v_h\Vert_{L^2(l)}\le Ch^{-1}\Vert v_h\Vert_{L^2(l)}, \quad \forall v_h\in W_l,
\end{equation*}
where the definition of $l$ and $W_l$ will be provided shortly. To this end, we introduce a further assumption that is similar to Assumption \ref{assumption} but related to the element edges. Let $l$ be an interior edge shared by the elements $K^+$ and $K^-$, and define $\omega_l := K^+ \cup K^-$. Let $r^j_{h, l}:=(\phi_j - \mathcal{I}_h\phi_j)|_l$ denote the enrichment functions associated with patch $\omega_l$ and restricted to $l$. We first demonstrate the approximation space on $l$. Let $v^+ = p^++\sum a_j^+L_jr^j$ and $v^- = p^-+\sum a_j^-L_jr^j$ be functions defined on $K^+$ and $K^-$, respectively. Here, the FE hat function $L_j|_l=0$ if the node $j$ associated with $L_j$ is not on $e$. Moreover, $\phi^j$ and $\partial_n\phi^j$ are continuous across $l$. Thus,
\begin{equation}
    w_h:= \partial_nv^+|_l - \partial_nv^-|_l = 
    \underbrace{[\partial_n p]}_{\in \mathbb{P}_0(l)} + 
    \underbrace{\sum \alpha_j[\partial_nL_j]r_{h, l}^j}_{\in \operatorname{span}\{r^j_{h, l}\}} + 
    \underbrace{\sum \alpha_jL_j|_l[\partial_n(\phi^j-\mathcal{I}_h\phi^j)]}_{\in \mathbb{P}_1(l)},
\end{equation}
which implies that the approximation space on $l$ is
\begin{equation}
    w_h\in W_{l}:=\mathbb{P}_1(l)\oplus\operatorname{span}\{r_{h, l}^j\}_{j=1}^{M^++M^-}.
    \label{flux space}
\end{equation}
The following assumption ensures that $W_{l}$ is non-degenerated.
\begin{assumption}
\label{edge assumption}
    For all mesh size $h$ and the patch $\omega_l$, the basis functions $\{\hat{r}^j_{h, l}\}$ defined on the reference patch $\hat\omega_l$ and associated with $\omega_l$ are uniformly linearly independent. That is, there exists a constant $C_l>0$, independent of $h$ and $\omega_l$, such that
   $$
   \alpha^TG_{h, l}\alpha\ge C_l|\alpha|^2,\quad\forall \alpha\in\mathbb{R}^{M^++M^-},
   $$
   where
   $$G_{h, l}:=((\hat{r}^i_{h, l}, \hat{r}^j_{h, l})_{L^2(\hat{l})})^{M^++M^-}_{i, j=1}$$
   is the $L^2-$Gram matrix.
\end{assumption}
Based on Assumption \ref{edge assumption}, we have the following inverse estimates on $l$.
\begin{lemma}[Inverse inequality]
\label{edge inequality}
Suppose that all enrichment functions $\phi_j$ associated with a patch $\omega_l$ satisfy Assumption~\ref{edge assumption}. Then, there exists a constant $C>0$ independent of $h$ and $\omega_l$ such that 
\begin{equation}
    \Vert \nabla v_h\Vert_{L^2(l)}\le Ch^{-1}\Vert v_h\Vert_{L^2(l)}
    \label{inverse}
\end{equation}
for all $v_h\in W_l$.
\end{lemma}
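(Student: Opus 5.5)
The plan mirrors the three-step scaling argument of Lemma~\ref{Inverse inequality}, now carried out on the reference edge $\hat l$ of the reference patch $\hat\omega_l$. Here $\nabla v_h$ on $l$ means the tangential derivative of $v_h\in W_l$ along $l$, so the claim is a one-dimensional inverse estimate. We map $l$ affinely to $\hat l$ and obtain $\hat W_l=\mathbb{P}_1(\hat l)\oplus \hat R_{h,l}$ with $\hat R_{h,l}:=\operatorname{span}\{\hat r^j_{h,l}\}$. It then suffices to prove $\Vert \hat\nabla \hat v\Vert_{L^2(\hat l)}\le C\Vert \hat v\Vert_{L^2(\hat l)}$ with $C$ independent of $h$ and $\omega_l$. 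On a one-dimensional edge of length $h_l\sim h$ (quasi-uniformity), the factor $h^{-1}$ is recovered because the derivative scales like $h^{-1}$ while the two $L^2(l)$ norms scale identically, so their common factor $h^{1/2}$ cancels.

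\emph{Step 1 (enrichment part).} Write $w=\sum_j\alpha_j\hat r^j_{h,l}\in\hat R_{h,l}$. Assumption~\ref{edge assumption} gives $|\alpha|\le C_l^{-1/2}\Vert w\Vert_{L^2(\hat l)}$. Next we bound each $\hat r^j_{h,l}$ uniformly in $H^1(\hat l)$, in fact in $H^{3/2+\epsilon}$ or at least in a space compactly embedded in $H^1(\hat l)$. For this we use the trace theorem on $\hat K^\pm$ together with the uniform $H^2(\hat K)$ bound on $\hat r^j_{h,K}$ from Step 1 of Lemma~\ref{Inverse inequality}: the trace of an $H^2(\hat K)$ function lies in $H^{3/2}(\hat l)\hookrightarrow H^1(\hat l)$, and this embedding is compact. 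Since the number $M^++M^-$ of enrichments is bounded, this yields $\Vert w\Vert_{H^1(\hat l)}\le C\sqrt{M^++M^-}\,|\alpha|\le C_R\Vert w\Vert_{L^2(\hat l)}$.

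\emph{Step 2 (uniform separation).} We show that there is $\delta>0$ with $\operatorname{dist}_{L^2(\hat l)}(w,\mathbb{P}_1(\hat l))\ge\delta\Vert w\Vert_{L^2(\hat l)}$ for all $w\in\hat R_{h,l}$. We argue by contradiction exactly as before. Take normalized $w_n$ whose distance to $\mathbb{P}_1(\hat l)$ tends to zero. Compactness of the family of traces in $H^1(\hat l)$ gives a subsequence $w_n\to w_*$ in $H^1(\hat l)$, and closedness of $\mathbb{P}_1(\hat l)$ gives $w_*\in\mathbb{P}_1(\hat l)$ with $\Vert w_*\Vert_{L^2(\hat l)}=1$. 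Every $\hat r^j_{h,l}$ vanishes at the two endpoints of $\hat l$, because $\phi_j-\mathcal I_h\phi_j$ vanishes at the mesh nodes. In one dimension $H^1(\hat l)\hookrightarrow C(\hat l)$, so point values pass to the limit and $w_*$ vanishes at both endpoints. A linear function vanishing at both endpoints of $\hat l$ is zero, which is a contradiction.

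\emph{Step 3.} Split $v=p+w$ with $p\in\mathbb{P}_1(\hat l)$ and $w\in\hat R_{h,l}$. Step 2 gives $\Vert v\Vert\ge\delta\Vert w\Vert$, hence $\Vert p\Vert\le(1+\delta^{-1})\Vert v\Vert$. Norm equivalence on the finite-dimensional space $\mathbb{P}_1(\hat l)$ bounds $\Vert p\Vert_{H^1}$, and Step 1 bounds $\Vert w\Vert_{H^1}$; together they give the reference estimate, and scaling back to $l$ finishes the proof.

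The main obstacle is the uniform compactness used in Steps 1--2. It requires the traces $\hat r^j_{h,l}$ to be uniformly bounded in a norm strictly stronger than $H^1(\hat l)$. This hinges on the scaled bound $h_K^{2-d/2}|\phi_j|_{H^2}$ staying bounded, as in Lemma~\ref{Inverse inequality}, and on the trace theorem from $H^2(\hat K^\pm)$. If that bound is unavailable, I would try to obtain Step 1 directly from the $H^2$ bound and finite dimensionality, and Step 2 from $C(\hat l)$-compactness via Arzelà--Ascoli.
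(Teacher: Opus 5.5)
Your proposal is correct and takes the same route as the paper, whose proof only states that the argument of Lemma~\ref{Inverse inequality} carries over to the edge: Gram-matrix control of the coefficients, uniform separation of $\operatorname{span}\{\hat r^j_{h,l}\}$ from $\mathbb{P}_1(\hat l)$ by contradiction and compactness (using that the $\hat r^j_{h,l}$ vanish at the endpoints of $\hat l$), and then splitting $v=p+w$ and scaling. Your extra details supply what the paper leaves implicit: the uniform $H^{3/2}(\hat l)$ bound via the trace theorem from $\hat K^\pm$, and the one-dimensional embedding $H^1(\hat l)\hookrightarrow C(\hat l)$ that passes endpoint values to the limit. Your caveat about needing the scaled $H^2(\hat K)$ bound on $\hat r^j_{h,K}$ names the same unstated hypothesis the paper relies on.
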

\begin{proof}
    The proof follows the same lines as that of Lemma \ref{Inverse inequality}.
\end{proof}

Next, we introduce the edge bubble function $b_l := L_i L_j$, where $L_i$ and $L_j$ are the nodal basis functions associated with the endpoints of $l$. The support of $b_l$ is exactly $\omega_l$. As a consequence of Assumption \ref{edge assumption} and Lemma \ref{edge inequality}, we have the following norm equivalency results.
\begin{lemma}[Norm equivalency]
    Consider the enrichment function $\phi$ satisfy Assumption~\ref{edge assumption}, and let $b_l$ be the bubble function defined on $\omega_l$. Then, there exists a constant $C>0$ independent of $h$ and $\omega_l$ such that
    \begin{equation}
        \Vert w_h\Vert_{L^2(l)}^2\le C\int_l w^2_hb_lds
        \label{edge equivalency}
    \end{equation}
    for all $w_h\in W_l$
    \end{lemma}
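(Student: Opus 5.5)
We argue exactly as in part (1) of the proof of the element-bubble norm equivalency \eqref{norm equivalency}: scale to the reference configuration and use compactness, with Lemma~\ref{edge inequality} supplying the needed uniform bound. On $l$ the edge bubble $b_l=L_iL_j$ restricts to the standard one-dimensional quadratic bubble. It is nonnegative, vanishes only at the two endpoints, and satisfies $\Vert b_l\Vert_{L^\infty(l)}\le 1$. Both sides of \eqref{edge equivalency} scale like $h_l$ under the affine map $l\to\hat l$, and $\hat b_l$ is a fixed function on $\hat l$. It therefore suffices to find a constant $C$, independent of $h$ and $\omega_l$, with
\begin{equation*}
\Vert \hat w\Vert^2_{L^2(\hat l)}\le C\int_{\hat l}\hat w^2\,\hat b_l\,\text{d}s\qquad\text{for all } \hat w\in \hat W_l:=\mathbb P_1(\hat l)\oplus\operatorname{span}\{\hat r^j_{h,l}\}.
\end{equation*}

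Suppose this fails. Then there are $h_n$, patches $\omega_{l_n}$ and functions $w_n\in\hat W_{l_n}$ with $\Vert w_n\Vert_{L^2(\hat l)}=1$ and $\Vert \hat b_l^{1/2}w_n\Vert_{L^2(\hat l)}\to0$. By Lemma~\ref{edge inequality}, transported to the reference edge (where the factor $h^{-1}$ becomes a fixed constant), $\Vert\nabla w_n\Vert_{L^2(\hat l)}\le C$. Hence $\{w_n\}$ is bounded in $H^1(\hat l)$, uniformly in $n$. Since $\hat l$ is a one-dimensional interval, the embedding $H^1(\hat l)\hookrightarrow L^2(\hat l)$ is compact, so a subsequence satisfies $w_{n_k}\to w$ in $L^2(\hat l)$ with $\Vert w\Vert_{L^2(\hat l)}=1$. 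Because $\hat b_l^{1/2}\in L^\infty(\hat l)$, also $\hat b_l^{1/2}w_{n_k}\to \hat b_l^{1/2}w$ in $L^2(\hat l)$, and this limit must be zero. As $\hat b_l>0$ in the interior of $\hat l$, it follows that $w=0$ a.e., which contradicts $\Vert w\Vert_{L^2(\hat l)}=1$. Scaling back yields \eqref{edge equivalency}.

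The main obstacle is the uniform $H^1(\hat l)$ bound, i.e.\ the correct use of Lemma~\ref{edge inequality}. Its proof mirrors Lemma~\ref{Inverse inequality}, and one point needs checking there. The $\hat r^j_{h,l}$ vanish at the endpoints of $\hat l$, so $\operatorname{span}\{\hat r^j_{h,l}\}$ is uniformly separated from $\mathbb P_1(\hat l)$ by the same compactness argument as in Step~2. That argument requires the traces $\hat r^j_{h,l}$ to be uniformly bounded in $H^1(\hat l)$, or better in $H^{3/2}(\hat l)$. This should follow from the uniform $H^2(\hat K)$ bound on $\hat r^j_{h,K}$ together with the trace theorem. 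Once this is granted, the constant $C$ depends only on $C_l$ from Assumption~\ref{edge assumption}, on the number $M^++M^-$ of enrichment functions, and on the shape regularity of the mesh.
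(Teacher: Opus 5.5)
Your proposal is correct and follows the paper's route: the paper's proof simply defers to part (1) of the proof of \eqref{norm equivalency}, namely scaling to the reference configuration, a contradiction sequence normalized in $L^2$, a uniform $H^1$ bound from the inverse inequality, compact embedding into $L^2$, and positivity of the bubble in the interior. That is exactly what you carry out on $\hat l$, using Lemma~\ref{edge inequality} in place of Lemma~\ref{Inverse inequality}. Your closing remark, that the compactness step inside Lemma~\ref{edge inequality} needs the traces $\hat r^j_{h,l}$ uniformly bounded in $H^{3/2}(\hat l)$ via the trace theorem from $H^2(\hat K^\pm)$, is a sensible check on that lemma, which the paper also only asserts by analogy.
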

\begin{proof}
    The proof follows the same lines as that of the norm equivalency \eqref{norm equivalency}.
\end{proof}

We further introduce the normal-constant extension $T_l: H^1(l)\to H^1(\omega_l)$. Specifically, for any point in $\omega_l$, let $(s, n)$ be its local coordinates where $s$ denotes the projection onto the hyperplane containing $l$, and $n$ denotes the coordinate in the normal direction. For a given function $v \in H^1(l)$, the extension is defined such that it remains constant along the normal direction, i.e., $(T_l v)(s, n) = v(s)$. Then, the following inverse estimates hold:
\begin{lemma}[Inverse inequality]
    Consider the enrichment function $\phi$ satisfy Assumption~\ref{edge assumption}, and let $b_l$ be the bubble function defined on $\omega_l$. Then, there exists constants $C>0$ independent of $h$ and $\omega_l$ such that
        \begin{align}
        \Vert b_l T_lw_h\Vert_{L^2(\omega_l)}&\le Ch^{1/2}\Vert w_h\Vert_{L^2(l)},\label{edge_inv1}\\
                \Vert \nabla(b_l T_lw_h)\Vert_{L^2(\omega_l)}&\le Ch^{-1/2}\Vert w_h\Vert_{L^2(l)}.\label{edge_inv2}
    \end{align}
\end{lemma}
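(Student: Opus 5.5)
Both bounds follow from a direct scaling argument. The only real work is controlling the tangential derivative of $T_lw_h$, and for that we use the edge inverse inequality of Lemma~\ref{edge inequality}.

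Setup. Let $w_h\in W_l$ and put $v:=b_lT_lw_h$ on $\omega_l$. Shape regularity and quasi-uniformity give $\Vert b_l\Vert_{L^\infty(\omega_l)}\le 1$ and $\Vert\nabla b_l\Vert_{L^\infty(\omega_l)}\le Ch^{-1}$, by \eqref{partition} and the product rule. Since $\omega_l$ is a pair of triangles sharing $l$, it is contained in the strip $\{(s,n): s\in \tilde l,\ |n|\le Ch\}$. Here $\tilde l$ is the projection of $\omega_l$ onto the line through $l$. A subtle point is that this projection may be slightly larger than $l$ itself, because the opposite vertices can project outside $l$. I would handle this by extending $w_h$ to $\tilde l$ in the natural way: the polynomial part extends trivially, and the enrichment parts are restrictions of the globally defined $\phi_j-\mathcal I_h\phi_j$. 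Alternatively, one can define $T_l$ on the subregion of $\omega_l$ whose projection lies in $l$, cutting off with $b_l$ as in Verfürth's argument. In the latter form the $L^2$ norm of $T_lw_h$ over that region is computed exactly by Fubini.

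First estimate \eqref{edge_inv1}. Fubini in the $(s,n)$ coordinates gives
\[
\Vert b_lT_lw_h\Vert^2_{L^2(\omega_l)}\le\int_{l}\int_{|n|\le Ch}|w_h(s)|^2\,dn\,ds\le Ch\Vert w_h\Vert^2_{L^2(l)}.
\]
Taking square roots yields \eqref{edge_inv1}.

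Second estimate \eqref{edge_inv2}. By the product rule,
\[
\nabla(b_lT_lw_h)=T_lw_h\,\nabla b_l+b_l\,\nabla T_lw_h.
\]
The first term is bounded by $Ch^{-1}\Vert T_lw_h\Vert_{L^2(\omega_l)}\le Ch^{-1/2}\Vert w_h\Vert_{L^2(l)}$, using the Fubini bound above. For the second term, $T_lw_h$ is constant in $n$, so $\nabla T_lw_h=T_l(\partial_sw_h)$. Applying Fubini again gives $\Vert\nabla T_lw_h\Vert_{L^2(\omega_l)}\le Ch^{1/2}\Vert\partial_sw_h\Vert_{L^2(l)}$. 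We then invoke Lemma~\ref{edge inequality}, which applies because $w_h\in W_l$ and Assumption~\ref{edge assumption} holds:
\[
\Vert\partial_sw_h\Vert_{L^2(l)}\le Ch^{-1}\Vert w_h\Vert_{L^2(l)}.
\]
Multiplying, the second term is bounded by $Ch^{-1/2}\Vert w_h\Vert_{L^2(l)}$, and adding the two terms gives \eqref{edge_inv2}.

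Main obstacle. The tangential-derivative step is the crux. For pure polynomials it is standard, but for the enrichment components $r^j_{h,l}$ it requires precisely the uniform non-degeneracy of Assumption~\ref{edge assumption}. That is why the lemma is stated under this assumption, with the constant independent of $h$ through the compactness/scaling argument of Lemma~\ref{Inverse inequality}.

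A secondary technical point is the geometric matter of the projection of $\omega_l$ versus $l$ noted above. I would resolve it by the cutoff variant of $T_l$ described there, so that only the values of $w_h$ on $l$ enter the estimates.
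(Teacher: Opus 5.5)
Your argument is correct and is essentially the paper's proof, carried out directly on the physical patch instead of the reference patch. Your Fubini bounds play the role of the $H^1$-stability of the normal-constant extension, your product rule with $\Vert\nabla b_l\Vert_{L^\infty(\omega_l)}\le Ch^{-1}$ replaces $\hat b_l\in W^{1,\infty}(\hat\omega_l)$, and the tangential derivative is controlled by Lemma~\ref{edge inequality} exactly as in the paper.

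One caution on your geometric side remark, an issue the paper itself ignores: the cutoff variant fails, because $b_l=L_iL_j$ does not vanish on the perpendicular to $l$ through an endpoint where $K^\pm$ has an obtuse angle, so the truncated function jumps there and leaves $H^1(\omega_l)$. Verf\"urth's actual device is an oblique extension through the affine maps of $K^\pm$ (constant along an edge direction of each triangle), and with it your Fubini computations go through unchanged up to shape-regularity constants.
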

\begin{proof}
    We employ a scaling argument, and show that there exists a constant $C$ independent of $h$ and $\omega_l$ such that
    \begin{equation}
            \Vert b_{\hat l} \hat T_{l}\hat {w}_h\Vert_{L^2(\hat{\omega}_l)}\le C\Vert \hat w_h\Vert_{H^1(\hat l)}.
            \label{reference}
    \end{equation}
    To begin with, the normal-constant extension $\hat{T}_l$ is $H^1$ stable, i.e., 
    $
        \Vert \hat{T}_l\hat w\Vert_{H^1({\hat{\omega}_l})}\le C\Vert \hat w\Vert_{H^1(\hat l)}
    $
    for any $\hat w\in H^1(\hat{l})$.
    Together with the inverse inequality \eqref{inverse}, we obtain
    $
         \Vert \hat{T}_l\hat w\Vert_{H^1({\hat{\omega}_l})}\le C\Vert \hat w\Vert_{L^2(\hat l)}.
    $
    As $\hat{b}_l\in W^{1, \infty}(\hat\omega_l)$, \eqref{reference} holds from direct derivations together with the Hölder inequality.
\end{proof}

We only consider here a simple situation, where $a(x)$ is a piecewise polynomial function of degree 1. Let $\omega_K$ be a neighbor of $K$; $f_h$ be a local approximation of $f$ in $V_N(\mathcal{T}_h, I^{enr}_h; \theta)$. The local efficiency result is presented as follows.
\begin{theorem}[Local efficiency]
     There exists a constant $C$, depending on enrichment functions $\{\phi_i\}$ applied in $\omega_K$ and the shape regularity of the mesh, but independent of $h$ and location of $K$, such that
    \begin{equation}
        \eta_K\le C\left(\Vert u-u_h\Vert_{E,\omega_K}+\operatorname{osc}(f, \omega_K)\right),
        \label{local efficiency}
    \end{equation}
    where $\omega_K$ is a neighborhood of $K$, and $\operatorname{osc}(f, \omega_K)$ is the data oscillation defined as
    \begin{equation}
        \operatorname{osc}(f, \omega_K):=\sqrt{\sum_{K\subset\omega_K}h^2_K\Vert f-f_h\Vert^2_{L^2(K)}}.
    \end{equation}
\end{theorem}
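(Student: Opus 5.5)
We follow Verfürth's bubble-function argument and estimate the two parts of $\eta_K$ separately: first the element residual, then the edge jumps. Throughout, the element residual is replaced by its discrete surrogate
\[
R_h := f_h + \nabla\cdot a\nabla u_h .
\]
The key point is that $R_h|_K$ should lie in a space covered by the inverse estimates. Since $a$ is piecewise linear and $u_h|_K\in V_N(K)$, we use that $\nabla\cdot a\nabla u_h$ (together with $f_h$) belongs to a finite-dimensional space on $K$ spanned by polynomials and by derivatives of the enrichment functions. For this space we expect the norm equivalency \eqref{norm equivalency} and the inverse inequality for $b_Kv_h$ to hold, with constants depending on the enrichments; this dependence is exactly what is permitted in the constant $C$ of \eqref{local efficiency}. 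If needed, we regard $V_N(K)$ as enlarged by these derivative functions, with Assumption~\ref{assumption} applied to the enlarged set.

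For the element term, test \eqref{star} with $v_K := b_K R_h \in H^1_0(K)$. On $K$ integration by parts produces no jump term, so $R(v_K) = (f+\nabla\cdot a\nabla u_h, v_K)_K$. Using \eqref{norm equivalency},
\[
\Vert R_h\Vert^2_{L^2(K)} \le C (R_h, b_K R_h)_K = C\big[a(e, v_K) + (f_h - f, v_K)_K\big].
\]
Then apply Cauchy--Schwarz, the inverse inequality $\Vert\nabla(b_K R_h)\Vert_{L^2(K)}\le Ch^{-1}\Vert R_h\Vert_{L^2(K)}$, and $\Vert b_K\Vert_{L^\infty}\le 1$. After dividing by $\Vert R_h\Vert_{L^2(K)}$ this gives
\[
h_K\Vert R_h\Vert_{L^2(K)} \le C\big(\Vert e\Vert_{E,K} + h_K\Vert f-f_h\Vert_{L^2(K)}\big).
\]
The triangle inequality then bounds $h_K\Vert f+\nabla\cdot a\nabla u_h\Vert_{L^2(K)}$ by the same right-hand side.

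For an edge $l\subset\partial K$, set $w_h := [a\partial_{\boldsymbol n}u_h]$. Because $a$ is continuous and piecewise linear, $w_h$ belongs to (a product-enlarged version of) the space $W_l$ in \eqref{flux space}. Take the test function $v_l := b_l T_l w_h$, which is supported in $\omega_l$ and vanishes on $\partial\omega_l$. From \eqref{weak form} and \eqref{star},
\[
\langle w_h, b_l w_h\rangle_l = a(e,v_l) - \sum_{K'\subset\omega_l}(f+\nabla\cdot a\nabla u_h, v_l)_{K'},
\]
up to the factor convention for the jump. The norm equivalency \eqref{edge equivalency} bounds $\Vert w_h\Vert^2_{L^2(l)}$ by the left-hand side. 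Estimating the right-hand side with \eqref{edge_inv1}--\eqref{edge_inv2} gives
\[
\Vert w_h\Vert^2_{L^2(l)} \le C\Big(h^{-1/2}\Vert e\Vert_{E,\omega_l} + h^{1/2}\sum_{K'\subset\omega_l}\Vert f+\nabla\cdot a\nabla u_h\Vert_{L^2(K')}\Big)\Vert w_h\Vert_{L^2(l)}.
\]
Multiplying by $h_l^{1/2}$ and inserting the element bound already proved on both $K^\pm$ yields
\[
h_l^{1/2}\Vert w_h\Vert_{L^2(l)} \le C\big(\Vert e\Vert_{E,\omega_l} + \operatorname{osc}(f,\omega_l)\big).
\]
Summing over the at most three edges of $K$ and squaring gives \eqref{local efficiency}, with $\omega_K$ the union of $K$ and its edge neighbours.

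The main obstacle is the membership step. We must check that $R_h|_K$ and $[a\partial_{\boldsymbol n}u_h]|_l$ actually lie in finite-dimensional spaces for which Assumptions~\ref{assumption} and~\ref{edge assumption} give $h$-uniform constants. The difficulty is that the enrichments are not polynomials, so applying $\nabla\cdot a\nabla$ produces new functions such as $\Delta\phi_j$ and $\nabla L_j\cdot\nabla\phi_j$. Our first attempt is to take a fixed finite set of these functions, note that their scaled versions are uniformly bounded in the reference variables, and rerun the compactness arguments of Lemma~\ref{Inverse inequality} and the norm-equivalency lemma on this enlarged span, with the constants left dependent on $\{\phi_i\}$ as the theorem allows.
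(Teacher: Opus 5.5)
Your proposal follows the paper's proof. It is Verfürth's bubble argument: test with $b_KR_{K,h}$, using \eqref{norm equivalency} and the bubble inverse estimate, for the element residual; then test with $b_lT_l[a\partial_n u_h]$, using \eqref{edge equivalency}, \eqref{edge_inv1} and \eqref{edge_inv2}, for the flux jump, and feed the element bound into the edge bound to produce the oscillation term.

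The membership issue you flag is one the paper does not address. It applies the $V_N(K)$ lemmas directly to $R_{K,h}$, which it writes as $f_h+\nabla a\cdot\nabla u_h$, as though $\Delta u_h$ vanished elementwise. Your enlargement is therefore an addition rather than a deviation. If you carry it out, two points matter:
\begin{itemize}
\item Step~2 of Lemma~\ref{Inverse inequality} relies on the enrichments vanishing at the vertices of $\hat K$. The new functions $\nabla L_j\cdot\nabla r^j_{h,K}$ and $L_j\Delta\phi_j$ do not vanish there, so applying Assumption~\ref{assumption} to the enlarged set is not enough. You must also assume uniform separation of the enlarged span from $\mathbb{P}_1(\hat K)$.
\item For the edge term you can avoid enlarging $W_l$ altogether. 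Test with $b_lT_l[\partial_n u_h]$, where $[\partial_n u_h]\in W_l$, and use that $a$ is continuous with $a_{\min}\le a\le a_{\max}$. This sidesteps the same separation failure for your product space, since a quadratic can vanish at both endpoints of $l$.
\end{itemize}
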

\begin{proof}
    Let $R_K=f + \nabla a\cdot\nabla u_h$. Since $R_K = (f - f_h) + f_h + \nabla a\cdot\nabla u_h=:(f - f_h) + R_{K, h}$, we first only need to show
    \begin{equation}
    h\Vert R_{K, h}\Vert_{L^2(K)}\le C\left(\Vert e\Vert_{E, K} + h\Vert f-f_h\Vert_{L^2(K)}\right).
    \label{st1}
    \end{equation}
    Direct computation shows that $R_{K, h}= (f_h - f) - \nabla a\cdot \nabla e$. Let $v_b =b_KR_{K, h}$, where $b_K$ is the bubble function defined previously. Testing the equation with $v_b$, and  applying the norm equivalency \eqref{norm equivalency} and inverse inequality \eqref{inverse} yield
    \begin{align*}
        \Vert R_{K,h}\Vert^2_{L^2(K)}&\le C(R_{K, h}, v_b) = C(f-f_h, v_b) - C(\nabla a\cdot \nabla e,v_b)\\
        &\le C\Vert f - f_h\Vert_{L^2(K)}\Vert b_KR_{K,h}\Vert_{L^2(K)} + C\Vert e\Vert_{E, K}\Vert \nabla(b_KR_{K,h})\Vert_{L^2(K)}\\
        &\le C\Vert f - f_h\Vert_{L^2(K)}\Vert R_{K,h}\Vert_{L^2(K)} + Ch^{-1}\Vert e\Vert_{E, K}\Vert R_{K,h}\Vert_{L^2(K)}.
    \end{align*}
    By multiplying both side by $h$ and dividing by $\Vert R_{K,h}\Vert_{L^2(K)}$, we obtain \eqref{st1}.

    Next, we demonstrate that
    \begin{equation}
        h^{1/2}\Vert [a\nabla_nu_h]\Vert_{L^2(l)}\le C\left(\Vert e\Vert_{E,\omega_l} + \sum_{K\subset\omega_K}h\Vert R_{K}\Vert_{L^2(\omega_K)}\right).
        \label{st2}
    \end{equation}
    Let $v_l=b_lT_l[a\nabla_nu_h]$ be the test function, where $b_l$ and $T_l$ are the bubble function and extension. Testing the equation $-\nabla a\cdot\nabla e = R_K$ with $v_l$, and applying the norm equivalency \eqref{edge equivalency} and the inverse inequalities \eqref{edge_inv1} and \eqref{edge_inv2}, we have
    \begin{align*}
    \Vert [a\nabla_nu_h]\Vert^2_{L^2(l)} &\le C\langle [a\nabla_nu_h], v_l\rangle = C(a\nabla e,\nabla v_l)-C(R_K, v_l)\\
    &\le C\Vert R_K\Vert_{L^2(\omega_l)}\Vert v_l\Vert_{L^2(\omega_l)} + C\Vert e\Vert_{E,\omega_e}\Vert\nabla v_l\Vert_{L^2(\omega_e)}\\
    &\le Ch^{1/2}\Vert R_K\Vert_{L^2(\omega_l)}\Vert [a\nabla_nu_h]\Vert_{L^2(l)} + Ch^{-1/2}\Vert e\Vert_{E,\omega_e}\Vert [a\nabla_nu_h]\Vert^2_{L^2(l)}.
    \end{align*}
    Therefore, by multiplying both side by $h^{1/2}$ and dividing by $\Vert [a\nabla_nu_h]\Vert_{L^2(l)}$, we therefore obtain \eqref{st2}. Combining \eqref{st1} and \eqref{st2}, we yield the local efficiency \eqref{local efficiency}.
\end{proof}

\subsection{Error estimates on interface problems}
Let $I^{enr}_h=\{i\in I_h: P_i\in e_s \text{ where } e_s\cap\Gamma\neq\emptyset\}$ be the set of enriched nodes. Note that the enrichment strategy here is to only enrich those elements intersected by the interface, see Figure \ref{fig:int}. To obtain the optimal convergence rate $\mathcal{O}(h)$, the function space
\begin{equation}
\mathbb M:=\{u: u|_{\Omega_r}\in H^2(\Omega_r),\; r=0, 1 \text{ and } \Vert D^\alpha u\Vert_{L^\infty(\Gamma)}<\infty, \vert\alpha\vert\le1\}
\end{equation}
with norm
\begin{equation}
\Vert u\Vert_{\mathbb M} = \Vert u_0\Vert_{H^2(\Omega_0)}+\Vert u_1\Vert_{H^2(\Omega_1)}+\sum_{\vert\alpha\vert\le1}\Vert D^\alpha u\Vert_{L^\infty(\Gamma)}
\end{equation}
is introduced. This is a prevailing assumption \cite{wang2025general, gong2024improved, zhang2019strongly} in the analytical study of SGFEM for interface problems. 
\begin{figure}[htbp]
    \centering
\begin{tikzpicture}[scale=0.35]

% --------------------------------
% parameters
% --------------------------------
\def\N{7}          % grid from -N to N
\def\R{4.4}        % radius of the interface
\def\xc{0.15}      % x-coordinate of circle center
\def\yc{0.05}      % y-coordinate of circle center

% --------------------------------
% draw structured square grid
% --------------------------------
\foreach \i in {-\N,...,\N} {
  \draw[thin, black!45] (\i,-\N) -- (\i,\N);
  \draw[thin, black!45] (-\N,\i) -- (\N,\i);
}

% --------------------------------
% split each square into two right triangles
% using the same diagonal direction
% --------------------------------
\foreach \i in {-7,...,6} {
  \foreach \j in {-7,...,6} {
    \draw[thin, black!45] (\i,\j) -- (\i+1,\j+1);
  }
}

% boundary of the computational box
\draw[thin, black!60] (-\N,-\N) rectangle (\N,\N);

% --------------------------------
% mark enriched nodes:
% vertices of triangles intersected by Gamma
% Criterion:
% a triangle is cut if its vertices have level-set
% values with opposite signs.
% --------------------------------
\foreach \i in {-7,...,6} {
  \foreach \j in {-7,...,6} {

    % four vertices of the square cell
    \pgfmathsetmacro{\xA}{\i}
    \pgfmathsetmacro{\yA}{\j}

    \pgfmathsetmacro{\xB}{\i+1}
    \pgfmathsetmacro{\yB}{\j}

    \pgfmathsetmacro{\xC}{\i+1}
    \pgfmathsetmacro{\yC}{\j+1}

    \pgfmathsetmacro{\xD}{\i}
    \pgfmathsetmacro{\yD}{\j+1}

    % level set values phi = (x-xc)^2 + (y-yc)^2 - R^2
    \pgfmathsetmacro{\phiA}{(\xA-\xc)^2 + (\yA-\yc)^2 - \R^2}
    \pgfmathsetmacro{\phiB}{(\xB-\xc)^2 + (\yB-\yc)^2 - \R^2}
    \pgfmathsetmacro{\phiC}{(\xC-\xc)^2 + (\yC-\yc)^2 - \R^2}
    \pgfmathsetmacro{\phiD}{(\xD-\xc)^2 + (\yD-\yc)^2 - \R^2}

    % Triangle 1: A-B-C
    \pgfmathsetmacro{\maxABC}{max(max(\phiA,\phiB),\phiC)}
    \pgfmathsetmacro{\minABC}{min(min(\phiA,\phiB),\phiC)}
    \pgfmathtruncatemacro{\cutABC}{ifthenelse(\maxABC>0 && \minABC<0,1,0)}

    \ifnum\cutABC=1
      \node[green!70!black, scale=0.85] at (\xA,\yA) {$\times$};
      \node[green!70!black, scale=0.85] at (\xB,\yB) {$\times$};
      \node[green!70!black, scale=0.85] at (\xC,\yC) {$\times$};
    \fi

    % Triangle 2: A-C-D
    \pgfmathsetmacro{\maxACD}{max(max(\phiA,\phiC),\phiD)}
    \pgfmathsetmacro{\minACD}{min(min(\phiA,\phiC),\phiD)}
    \pgfmathtruncatemacro{\cutACD}{ifthenelse(\maxACD>0 && \minACD<0,1,0)}

    \ifnum\cutACD=1
      \node[green!70!black, scale=0.85] at (\xA,\yA) {$\times$};
      \node[green!70!black, scale=0.85] at (\xC,\yC) {$\times$};
      \node[green!70!black, scale=0.85] at (\xD,\yD) {$\times$};
    \fi

  }
}

% --------------------------------
% interface Gamma
% --------------------------------
\draw[blue!70!black, line width=0.3pt] (\xc,\yc) circle (\R);

% --------------------------------
% labels
% --------------------------------
\node[font=\Large] at (-0.25,0.20) {$\Omega_0$};
\node[font=\Large] at (-2.4,-5.65) {$\Omega_1$};
\node[font=\Large] at (4.95,-0.25) {$\Gamma$};

\end{tikzpicture}
\caption{The mesh and enriched nodes for interface problems.}
\label{fig:int}
\end{figure}
% From the trace and embedding theorem, this assumption implies that the solution u belongs to $H^{2+\epsilon}(\Omega_0\cup\Omega_1)$ in 2D, where $\epsilon>0$.

In the remainder of this section, we revisit this construction and present a new proof that obviates the need for additional regularity while maintaining the optimal convergence rate. We emphasize that while this analysis is conducted within the standard SGFEM framework—typically involving a single, fixed enrichment function—it yields critical insights into our neural network architecture. Specifically, it elucidates the rationale behind adopting the formulation in \eqref{interface_neural} for neural enrichment.

Let $\Gamma_i = \Gamma\cap\omega_i$ denote the part of $\Gamma$ in patch $\omega_i$ associated with point $P_i$. Let $\omega_i^r = \omega_i\cap\bar\Omega_r, r=0, 1.$ The following lemma is valid.
\begin{lemma}
    Let $u$ be the solution of \eqref{model2}, then following patch wised estimates hold
    \begin{align}
        &\Vert [\partial_nu]\Vert_{H^{1/2}(\Gamma_i)}\le C\left(\vert u_0\vert_{H^2(\omega_i^0)}+\vert u_1\vert_{H^2(\omega_i^1)}\right)\label{Ju1/2}\\
        &\Vert [\partial_n u]\Vert_{L^2(\Gamma_i)}\le Ch^{1/2}\left(\vert u_0\vert_{H^2(\omega_i^0)}+\vert u_1\vert_{H^2(\omega_i^1)}\right)\label{JuL2}
    \end{align}
    \label{lemma1}
\end{lemma}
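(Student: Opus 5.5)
The plan is to turn the jump $[\partial_{\boldsymbol n}u]$ into the normal derivative of a single $H^2$ function that vanishes on $\Gamma$. Then a scaled trace argument on the patch $\omega_i$ gives both estimates, with the interface conditions used to remove the lower-order part.

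\textbf{Step 1: reduce to one function.} Let $E_r:H^2(\Omega_r)\to H^2(\mathbb R^d)$ be Stein extension operators for $r=0,1$. Set $w:=E_0u_0-E_1u_1$. Since $[u]=0$, $w$ vanishes on $\Gamma$. Hence its tangential derivatives vanish there, and
\[
\nabla w|_{\Gamma}=[\partial_{\boldsymbol n}u]\,\boldsymbol n ,\qquad [\partial_{\boldsymbol n}u]=\partial_{\boldsymbol n}w .
\]
Because $[a\partial_{\boldsymbol n}u]=0$, the jump is also a fixed multiple of one-sided fluxes:
\[
[\partial_{\boldsymbol n}u]=\tfrac{a_1-a_0}{a_1}\partial_{\boldsymbol n}u_0=\tfrac{a_1-a_0}{a_0}\partial_{\boldsymbol n}u_1 .
\]
This second representation is what I would use to control the part of $\nabla w$ that is constant on the patch.

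\textbf{Step 2: scaled trace estimates.} I would map $\omega_i$ to a reference patch of unit size. The interface $\Gamma$ is smooth, so the image of $\Gamma_i$ is a curve with uniformly bounded geometry, and the trace constants from $H^1(\hat\omega_i^r)$ to $H^{1/2}(\hat\Gamma_i)$ and $L^2(\hat\Gamma_i)$ are uniform in $h$ and $i$. On the reference patch I would subtract a constant vector $c$ from $\nabla w$. The goal is
\[
\Vert \partial_{\boldsymbol n}w\Vert_{H^{1/2}(\hat\Gamma_i)}\le C\big(\vert w\vert_{H^2(\hat\omega_i)}+\vert c\cdot\boldsymbol n\vert_{\hat\Gamma_i}\text{-contribution}\big).
\]
By Bramble--Hilbert, $\Vert\nabla w-c\Vert_{H^1(\hat\omega_i)}\le C\vert w\vert_{H^2(\hat\omega_i)}$. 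Stein extension followed by restriction to a patch is bounded in the seminorm up to lower-order terms. Scaling back gives the factor $h^{1/2}$ in \eqref{JuL2}, since the $L^2(\Gamma_i)$ norm of an $H^1$ function scales as $h^{1/2}$ times its reference norm when the $H^2$-seminorm controls it. The bound \eqref{Ju1/2} follows directly from the $H^{1/2}$ trace, with no power of $h$ lost once we work with the seminorm.

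\textbf{Step 3: the constant $c$ (main obstacle).} The obstacle is the constant vector $c$. A Bramble--Hilbert argument only controls $\nabla w$ modulo constants, and $c\cdot\boldsymbol n$ is not obviously small. I would try to eliminate it with the interface conditions. Take $c_r$ to be the mean of $\nabla u_r$ over $\omega_i^r$, so that
\[
[\partial_{\boldsymbol n}u]=\boldsymbol n\cdot(\nabla u_0-c_0)-\boldsymbol n\cdot(\nabla u_1-c_1)+\boldsymbol n\cdot(c_0-c_1).
\]
Combining $\partial_{\boldsymbol\tau}u_0=\partial_{\boldsymbol\tau}u_1$ with $a_0\partial_{\boldsymbol n}u_0=a_1\partial_{\boldsymbol n}u_1$, I would show that $\boldsymbol n\cdot(c_0-c_1)$ is bounded on $\Gamma_i$ by oscillation terms of $\nabla u_r-c_r$, which are again controlled by $\vert u_r\vert_{H^2(\omega_i^r)}$. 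This rests on a compactness (Deny--Lions) argument over the class of admissible patches. It needs the kernel to be trivial: piecewise-linear functions satisfying both transmission conditions on a non-flat $\hat\Gamma_i$ should have zero normal-derivative jump.

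I expect the delicate point to be uniformity as $h\to0$, because $\hat\Gamma_i$ flattens. For a straight interface this kernel is nontrivial, so I would first check carefully whether the curvature of $\Gamma$ or an extra term is needed to close the argument.
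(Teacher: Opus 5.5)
Your Step~3 is where the argument breaks, and it cannot be repaired: the lemma as stated is false. Your own Step~1 identity and your flat-interface remark already point to the reason. Suppose \eqref{JuL2} held with $C$ independent of $h$ and $i$. The patches $\omega_i$ cover $\Gamma$, and each element lies in exactly $d+1$ of them. Summing over $i$ therefore gives
\[
\Vert[\partial_{\boldsymbol n}u]\Vert_{L^2(\Gamma)}^2\le\sum_i\Vert[\partial_{\boldsymbol n}u]\Vert_{L^2(\Gamma_i)}^2\le 2(d+1)C^2h\left(\vert u_0\vert^2_{H^2(\Omega_0)}+\vert u_1\vert^2_{H^2(\Omega_1)}\right)\longrightarrow0\quad(h\to0),
\]
so $[\partial_{\boldsymbol n}u]\equiv0$ on $\Gamma$. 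The $H^{1/2}$ estimate \eqref{Ju1/2} forces the same conclusion. Its left side dominates the $L^2(\Gamma_i)$ norm. Its squared right sides sum to at most $2(d+1)C^2\sum_r\vert u_r\vert^2_{H^2(\Omega_r\cap S_h)}$, where $S_h$ is the union of the patches meeting $\Gamma$. Since $S_h$ has measure $O(h)$, this sum tends to zero by absolute continuity of the integral.

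The jump cannot vanish in general. By your Step~1, $[\partial_{\boldsymbol n}u]=\frac{a_1-a_0}{a_1}\partial_{\boldsymbol n}u_0$. When $\bar\Omega_0\subset\Omega$, the divergence theorem gives $\int_\Gamma a_0\partial_{\boldsymbol n}u_0\,ds=-\int_{\Omega_0}f\,dx$. Hence the jump is nonzero whenever $a_0\neq a_1$ and $\int_{\Omega_0}f\,dx\neq0$.

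Explicitly, take $\Omega_0=B_R(0)$ with $\bar B_R(0)\subset\Omega$ and $u_r=a_{1-r}(\vert x\vert^2-R^2)$, $r=0,1$. These satisfy both transmission conditions with $f\equiv-2d\,a_0a_1$, and $[\partial_{\boldsymbol n}u]\equiv2R(a_1-a_0)$, while $\vert u_r\vert_{H^2(\omega_i^r)}=O(h^{d/2})$. On a patch meeting $\Gamma$ in a piece of measure $\sim h^{d-1}$, the left side of \eqref{JuL2} is of order $h^{(d-1)/2}$ and the right side is $O(h^{(d+1)/2})$. The exact solution of Example~3 behaves the same way, with $\vert[\partial_{\boldsymbol n}u]\vert\equiv0.9$ on $\Gamma$.

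So your suspicion in Step~3 is correct, and it is the decisive point rather than a technicality. For a pair of linear polynomials, $p_0=p_1$ on $\hat\Gamma_i$ forces $\nabla p_0=\nabla p_1$ only because $\hat\Gamma_i$ is curved. On the reference patch its curvature is $O(h)$. The compactness (Deny--Lions) constant that should bound $\boldsymbol n\cdot(c_0-c_1)$ by the oscillations $\nabla u_r-c_r$ therefore degenerates at least like $h^{-1}$; the rescaled example above attains this. That blow-up exactly cancels the power of $h$ you are after.

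The paper's own proof passes over the same point. The jump is invariant only when one common $p\in\mathbb P_1(\hat\omega)$ is added on both sides. The bound by $\Vert\hat u_0\Vert_{H^2/\mathbb P_1(\hat\omega^0)}+\Vert\hat u_1\Vert_{H^2/\mathbb P_1(\hat\omega^1)}$, however, lets the polynomials on the two subdomains be chosen independently. The jump then changes by $\boldsymbol n\cdot\nabla(p_0-p_1)$, which is precisely your $\boldsymbol n\cdot(c_0-c_1)$.

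A valid estimate must retain lower-order information on $u$. An example is the global trace bound $\Vert[\partial_{\boldsymbol n}u]\Vert_{L^2(\Gamma)}\le C(\Vert u_0\Vert_{H^2(\Omega_0)}+\Vert u_1\Vert_{H^2(\Omega_1)})$, with full norms and no factor $h^{1/2}$. Consequently, the bound \eqref{sigma} on $\sigma_i$, which rests on \eqref{JuL2}, would also have to be revisited.
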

\begin{proof}
    We only prove the first inequality here, and the other one could be derived in a similar way. By scaling argument, it suffices to show
    \begin{equation*}
            \Vert [\partial_n \hat{u}]\Vert_{H^{1/2}(\hat\Gamma)}\le C\left(\vert \hat u_0\vert_{H^2(\hat\omega^0)}+\vert \hat u_1\vert_{H^2(\hat\omega^1)}\right)
    \end{equation*}
    on the reference patch. From the trace theorem,
    \begin{equation*}
    \Vert [\partial_n \hat{u}]\Vert_{H^{1/2}(\hat\Gamma)}\le\Vert\partial_n\hat u_0\Vert_{H^{1/2}(\hat\Gamma)}+\Vert\partial_n\hat u_1\Vert_{H^{1/2}(\hat\Gamma)}\le C\left(\Vert \hat u_0\Vert_{H^2(\hat\omega^0)}+\Vert \hat u_1\Vert_{H^2(\hat\omega^1)}\right).
    \end{equation*}
    For any $p\in\mathbb P_1(\hat\omega)$, we have $\Vert [\partial_n \hat{u}]\Vert_{H^{1/2}(\hat\Gamma)}=\Vert [\partial_n (\hat{u}+p)]\Vert_{H^{1/2}(\hat\Gamma)}$.
    Thus, from the equivalent results in the quotient space, one immediately has
    \begin{equation*}
    \begin{aligned}
    \Vert [\partial_n \hat{u}]\Vert_{H^{1/2}(\hat\Gamma)}=\Vert[\partial_n \hat{u}]\Vert_{H^{1/2}/\mathbb P_1(\hat\Gamma)}&\le C\left(\Vert \hat u_0\Vert_{H^2/\mathbb P_1(\hat\omega^0)}+\Vert \hat u_1\Vert_{H^2/\mathbb P_1(\hat\omega^1)}\right)\\
    &\le C\left(\vert \hat u_0\vert_{H^2(\hat\omega^0)}+\vert \hat u_1\vert_{H^2(\hat\omega^1)}\right).
    \end{aligned}
    \end{equation*}
\end{proof}

We next present Lemma \ref{lemma4.2}, which is crucial for deriving the optimal convergence order.
This result is closely related to Lemma~4.2 in~\cite{wang2025general}, but differs in the regularity
assumptions imposed on the solution. This difference arises from the specific construction of the coefficients $\sigma_i$ \eqref{special_sigma}, which avoids the requirement of nodal values. 
\begin{lemma}
    Let $u$ be the solution of \eqref{model2}. For enrichment function (quasi-distance function) $\phi$ that satisfies the general enrichment conditions \eqref{quasi}, there exists a constant $\sigma_i$ and $z_i\in\mathbb P_1(\omega_i)$ for every patch $\omega_i$, such that
    \begin{equation}
    \left.
    \begin{aligned}
    \Vert u-z_i-\sigma_i\phi\Vert^2_{H^l(\omega_i)}\\
        \Vert [\mathcal{I}-\mathcal{I}_h](u-z_i-\sigma_i\phi)\Vert^2_{H^l(\omega_i)}
    \end{aligned}
    \right\}
         \le Ch^{4-2l}\left(\vert u_0\vert^2_{H^2(\omega_i^0)}+\vert u_1\vert^2_{H^2(\omega_i^1)}\right), l=0, 1.
    \end{equation}
    \label{lemma4.2}
\end{lemma}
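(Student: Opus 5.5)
The plan is to build $\sigma_i$ from the flux jump rather than from nodal values. On each patch $\omega_i$ that meets $\Gamma$, the function $u-\sigma_i\phi$ should have a normal-derivative jump that is small in $H^{1/2}(\Gamma_i)$. I choose $\sigma_i$ as a weighted mean of the jump ratio,
\begin{equation}
\sigma_i := \frac{\int_{\Gamma_i}[\partial_n u]\,[\partial_n\phi]\,ds}{\int_{\Gamma_i}[\partial_n\phi]^2\,ds},
\label{special_sigma}
\end{equation}
which is well defined because $|[\partial_n\phi]|\ge\kappa_1$. On patches not cut by $\Gamma$, I take $\sigma_i=0$, and the claim is just Bramble--Hilbert. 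Set $w:=u-\sigma_i\phi$. Then $w$ is continuous across $\Gamma_i$, since $[u]=[\phi]=0$. The piecewise $H^2$ seminorms of $w$ are controlled by those of $u$ plus $|\sigma_i|\,|\phi|_{W^{2,\infty}}|\omega_i|^{1/2}$. By Cauchy--Schwarz, $|\sigma_i|\le \kappa_1^{-2}\kappa_2|\Gamma_i|^{-1/2}\Vert[\partial_n u]\Vert_{L^2(\Gamma_i)}$. Together with \eqref{JuL2} and $|\Gamma_i|\sim h^{d-1}$, $|\omega_i|\sim h^d$, this gives $|\sigma_i|\,|\omega_i|^{1/2}\le Ch\,(|u_0|_{H^2(\omega_i^0)}+|u_1|_{H^2(\omega_i^1)})$. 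So the $\sigma_i\phi$ part is harmless at the $H^2$ level, and even gains a factor $h$.

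Next I pass to the reference patch $\hat\omega$ by scaling, so the target becomes $\inf_{z\in\mathbb P_1}\Vert \hat w-z\Vert_{H^1(\hat\omega)}\le C(|\hat w_0|_{H^2}+|\hat w_1|_{H^2}+\text{jump term})$. To prove this, I would establish a Bramble--Hilbert/Deny--Lions type estimate on the space of functions that are continuous, piecewise $H^2$ on $\hat\omega^0\cup\hat\omega^1$, and satisfy
\[
\Vert \hat w-z\Vert_{H^1(\hat\omega)}\le C\Big(|\hat w_0|_{H^2(\hat\omega^0)}+|\hat w_1|_{H^2(\hat\omega^1)}+\Vert[\partial_n\hat w]\Vert_{H^{1/2}(\hat\Gamma)}\Big)
\]
for the best $z$. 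The argument is by contradiction with compactness, as in Lemma~\ref{Inverse inequality}. If all three quantities vanish, $\hat w$ is piecewise linear, continuous, and has no flux jump across $\hat\Gamma$. For a nondegenerate interface piece, this forces a single global linear function.

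The jump term is then handled by the choice of $\sigma_i$. We have $[\partial_n w]=[\partial_n u]-\sigma_i[\partial_n\phi]$, and by construction this is $L^2(\Gamma_i)$-orthogonal to $[\partial_n\phi]$. Using \eqref{Ju1/2}, the $W^{1,\infty}$ bound on $[\partial_n\phi]$, and the scaled bound on $\sigma_i$, I expect $\Vert[\partial_n\hat w]\Vert_{H^{1/2}(\hat\Gamma)}\le C(|\hat u_0|_{H^2}+|\hat u_1|_{H^2})$. Scaling back gives the $l=0,1$ estimates for $u-z_i-\sigma_i\phi$. The interpolation estimate follows because $(\mathcal I-\mathcal I_h)$ annihilates $\mathbb P_1$. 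I would apply the standard piecewise interpolation bound elementwise: on uncut elements use the $H^2$ seminorm; on cut elements bound the nodal values via the $H^2\hookrightarrow C^0$ embedding of each piece, valid for $d\le3$. Then I combine with the first estimate.

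The main obstacle is the uniformity of the compactness constant with respect to how $\Gamma$ cuts the reference patch. The interface geometry varies with $i$ and $h$, and $\hat\Gamma$ may cut off tiny slivers. I would first try a contradiction argument over a compact family of interface configurations, which are flattened $C^2$ curves converging to hyperplanes as $h\to0$. A sliver could make the constant blow up. In that case I would instead extend $u_0,u_1$ across $\Gamma$ with Stein extensions, and write $w$ on the sliver side as the extension of the other side plus a correction whose normal derivative is the jump. The $H^{1/2}$ jump bound is exactly what makes that correction controllable.
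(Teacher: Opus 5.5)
\paragraph{The failing step.} The proof breaks where you import Lemma~\ref{lemma1}. You bound $|\sigma_i|\,|\omega_i|^{1/2}$, and then $\Vert[\partial_n\hat w]\Vert_{H^{1/2}(\hat\Gamma)}$, by the piecewise $H^2$ \emph{seminorms} of $u$ via \eqref{JuL2} and \eqref{Ju1/2}. For a solution of \eqref{model2}, $[\partial_n u]=(1-a_0/a_1)\partial_n u_0$ is a first-order quantity, and no second-order seminorm controls it.

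In fact the lemma is false as stated. Take $\Omega=(-1,1)^2$ cut along $x=0$, with $\Omega_0=\{x<0\}$. Choose $\alpha\neq\beta$ with $a_0\alpha=a_1\beta$. Let $k=\alpha x$ for $x<0$ and $k=\beta x$ for $x>0$, and set $u=\chi k$, where $\chi\in C_c^\infty(\Omega)$ equals $1$ near a cut patch $\omega_i$.
\begin{itemize}
\item Then $[u]=[a\partial_n u]=0$, and $u$ solves \eqref{model2} with the smooth source $f=-a_0\alpha(x\Delta\chi+2\partial_x\chi)$.
\item On the patch, $|u_0|_{H^2(\omega_i^0)}=|u_1|_{H^2(\omega_i^1)}=0$, while $[\partial_n u]=\alpha-\beta\neq0$ on $\Gamma_i$. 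This contradicts \eqref{JuL2}.
\item Now let $\phi=|x|+y^2$, which satisfies \eqref{quasi} with $[\partial_n\phi]=-2$. Suppose $u-z-\sigma\phi\equiv0$ on $\omega_i$ for some $z\in\mathbb P_1$. On $\omega_i^0$ this forces $\sigma y^2\in\mathbb P_1$, so $\sigma=0$ and $z=\alpha x$. Then $\alpha=\beta$ on $\omega_i^1$, a contradiction.
\end{itemize}
So the left-hand side of the lemma is positive for every choice of $\sigma_i,z_i$, while the right-hand side vanishes.

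In your argument the failure appears exactly where expected. Your $\sigma_i=(\beta-\alpha)/2$ cancels the flux jump. But $w=u-\sigma_i\phi$ then has $|w|_{H^2(\omega_i^0\cup\omega_i^1)}=2|\sigma_i|\,|\omega_i|^{1/2}\neq0$, so your claimed bound $|\sigma_i|\,|\omega_i|^{1/2}\le Ch(\cdots)=0$ is false. With $\phi=|x|(1+y^2)$ instead, no $\sigma_i$ removes $[\partial_n w]$ either. This does not contradict Theorem~\ref{main_theorem}, because the global seminorms of $u$ are nonzero due to $\chi$.

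\paragraph{Relation to the paper.} The gap is not specific to your route. The paper's proof uses Lemma~\ref{lemma1} in the same two places, to bound $|\sigma_i|$ and $\Vert\psi_i\Vert_{H^{1/2}(\Gamma_i)}$. Its error sits in the quotient-space step of Lemma~\ref{lemma1}. There, $[\partial_n\hat u]$ is invariant only under adding a \emph{common} $p\in\mathbb P_1$ to both sides. Passing to $\Vert\hat u_r\Vert_{H^2/\mathbb P_1(\hat\omega^r)}$ on each side separately needs invariance under independent $p_0,p_1$, which fails because $[\partial_n(p_0-p_1)]\neq0$ in general.

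Apart from this, your plan is a genuine alternative to the paper's.
\begin{itemize}
\item The paper removes the flux jump by an explicit lifting: a local interface problem $-\Delta\hat w_i^j=0$ with $[\partial_n\hat w_i^j]=\hat\psi_i$ and zero data on $\partial\hat\omega$. It then interpolates $w_i-w_i^j\in H^2(\omega_i)$ in the standard way.
\item You instead prove a Deny--Lions estimate with a jump term by compactness. That estimate is correct for a fixed, nondegenerate interface configuration.
\item Both routes need constants uniform in how $\Gamma$ cuts the patch. Both also tacitly use $|\Gamma_i|\ge ch^{d-1}$, which fails for slivers. The paper ignores this; you at least flag it.
\item Your weighted $\sigma_i$ is preferable. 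The paper's normalisation by $\int_{\Gamma_i}|[\partial_n\phi]|$ has the wrong sign whenever $[\partial_n\phi]<0$, as for the distance function. In the example above it doubles the jump instead of cancelling it.
\end{itemize}

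\paragraph{What a correct statement needs.} A valid local statement must keep a first-order term on the right-hand side. Examples are a scaled trace of $\nabla u$ on $\Gamma_i$, or the $L^\infty(\Gamma)$ bound on $D^\alpha u$, $|\alpha|\le1$, built into $\mathbb M$. That term is precisely what controls $|\sigma_i|\,|\phi|_{H^2}$, and it is the assumption the paper claims to dispense with.
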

\begin{proof}
    We first look for the constant $\sigma_i$. Let
    \begin{equation}
        \sigma_i = \frac{\int_{\Gamma_i}[\partial_n u]ds}{\int_{\Gamma_i}\vert[\partial_n \phi]\vert ds},
        \label{special_sigma}
    \end{equation}
    $w_i=u-\sigma_i\phi$, and $\psi_i(s):=[\partial_n w_i]$. Obviously, $\psi_i\in H^{1/2}(\Gamma_i)$. By applying \eqref{JuL2}, the Cauchy-Schwarz inequality, and the general enrichment conditions \eqref{quasi}, we have the following estimate for $\sigma_i$
    \begin{equation}
        \vert\sigma_i\vert\le\frac{h^{1/2}\Vert [\partial_n u]\Vert_{L^2(\Gamma_i)}}{\kappa_1h}\le C\left(\vert u_0\vert_{H^2(\omega_i^0)}+\vert u_1\vert_{H^2(\omega_i^1)}\right).
        \label{sigma}
    \end{equation}
    Thus, by employing Lemma~\ref{lemma1} again along with \eqref{sigma}, one obtains
    \begin{equation}
        \Vert\psi_i\Vert_{H^{1/2}(\Gamma_i)}\le\Vert [\partial_n u]\Vert_{H^{1/2}(\Gamma_i)}+\vert\sigma_i\vert\Vert [\partial_n \phi]\Vert_{H^{1/2}(\Gamma_i)}\le C\left(\vert u_0\vert_{H^2(\omega_i^0)}+\vert u_1\vert_{H^2(\omega_i^1)}\right).
        \label{psi_estimate}
    \end{equation}
    
    Next, we decompose $w_i$ into two parts as $w_i = w_i^s + w_i^j$, where $w_i^s$ represents a smooth part, while $w_i^j$ denotes a term involving  flux discontinuity. To define $w_i^s$, we lift $\psi_i$ from $H^{1/2}(\Gamma_i)$ to $H^1_0(\omega_i)\cap H_0^2(\omega^0_i\cup\omega^1_i)$. In fact, we can consider an elliptic interface problem on the reference patch $\hat\omega$ as follows:
    \begin{equation}
    \begin{aligned}
        &-\Delta \hat w_i^j = 0\quad \text{in }\hat\omega, \qquad
        [\hat w_i^j]|_{\hat\Gamma_i}=0,\qquad
        [\partial_n\hat{w}_i^j]|_{\hat\Gamma_i}=\hat\psi_i, \qquad\hat w_i^j\left|_{\partial\hat\omega}\right.=0.
    \end{aligned}
    \end{equation}
    From the elliptic regularity results, $\hat w_i^j$ could be bounded as follows:
    \begin{equation}
        \Vert \hat w_i^j\Vert_{H^2(\hat\omega^0\cup\hat\omega^1)}\le C\Vert \hat\psi_i\Vert_{H^{1/2}(\hat\Gamma_i)}.
    \end{equation}
    By mapping everything back into the current patch $\omega_i$, we have
    \begin{equation}
        \Vert w_i^j\Vert_{H^l(\omega^0_i\cup\omega_i^1)}\le Ch^{2-l}\Vert\psi_i\Vert_{H^{1/2}(\Gamma_i)}, l=0, 1, 2.
        \label{lift}
    \end{equation}
    Combining with \eqref{psi_estimate}, one would derive
    \begin{equation}
        \Vert w_i^j\Vert_{H^l(\omega^0_i\cup\omega_i^1)}\le Ch^{2-l}\left(\vert u_0\vert_{H^2(\omega_i^0)}+\vert u_1\vert_{H^2(\omega_i^1)}\right), l=0, 1, 2.
        \label{jump}
    \end{equation}

    Let $w_i^s = w_i - w_i^j$. It is easy to verify that $w_i^s\in H^2(\omega_i)$. We can therefore define the Lagrangian interpolation $z_i = \mathcal{I}_hw_i^s$ for this smooth part. The standard interpolation error estimates yield
    \begin{equation}
    \begin{aligned}
        \Vert w_i^s-z_i\Vert^2_{H^l(\omega)}&\le Ch^{4-2l}\vert w_i^s\vert^2_{H^2(\omega_i)}, l=0,1\\
        &\le Ch^{4-2l}\left(\vert u\vert^2_{H^2(\omega_i^0\cup\omega_i^1)}+\vert\sigma_i\vert^2\vert \phi\vert^2_{H^2(\omega^0_i\cup\omega^1_i)}+\vert w_i^j\vert^2_{H^2(\omega^0_i\cup\omega^1_i)}\right)\\
        &\le Ch^{4-2l}\left(\vert u_0\vert^2_{H^2(\omega_i^0)}+\vert u_1\vert^2_{H^2(\omega_i^1)}\right),\; l=0, 1.
    \end{aligned}
        \label{interpolation}
    \end{equation}
    Combining \eqref{jump} and \eqref{interpolation}, we immediately obtain the first term.
    \begin{equation}
        \Vert u-z_i-\sigma_i\phi\Vert^2_{H^l(\omega_i)}\le Ch^{4-2l}\left(\vert u_0\vert^2_{H^2(\omega_i^0)}+\vert u_1\vert^2_{H^2(\omega_i^1)}\right), \; l=0, 1.
    \end{equation}

    We next estimate the second term in \eqref{lemma4.2}. 
    % Let $\mathcal{I}$ be the identity operator. We have
    % \begin{equation}
    %     \Vert \mathcal{I}_h(u-z_i-\sigma_iG)\Vert^2_{H^l(\omega_i)} \le \Vert (\mathcal{I}-\mathcal{I}_h)(u-z_i-\sigma_iG)\Vert^2_{H^l(\omega_i)} + \Vert u-z_i-\sigma_iG\Vert^2_{H^l(\omega_i)}
    % \end{equation}
    % The second term we have already estimated. For the first term, 
    Since $z_i - \mathcal{I}z_i=0$, we have 
    \begin{equation}
        \Vert [\mathcal{I}-\mathcal{I}_h](u-z_i-\sigma_i\phi)\Vert^2_{H^l(\omega_i)}
        \le\Vert(\mathcal{I}-\mathcal{I}_h)(w_i^s)\Vert_{H^l(\omega_i)}+\Vert(\mathcal{I}-\mathcal{I}_h)(w_i^j)\Vert_{H^l(\omega_i)}
    \end{equation}
    The first term is related to standard interpolation error estimates. The second term, since $w_i^j$ has zero trace on patch $\omega_i$, degenerates into $\Vert w^j_i\Vert_{H^l(\omega_i)}, l=0, 1$. Therefore, by applying \eqref{jump} we ultimately have
    \begin{equation}
        \Vert [\mathcal{I}-\mathcal{I}_h](u-z_i-\sigma_i\phi)\Vert^2_{H^l(\omega_i)}\le Ch^{4-2l}\left(\vert u_0\vert^2_{H^2(\omega_i^0)}+\vert u_1\vert^2_{H^2(\omega_i^1)}\right), l=0, 1.
    \end{equation}
\end{proof}
% We give the main theorem of error estimates below:
\begin{theorem}
    Suppose that $u$ is the solution of the interface problem \eqref{model2}, and $u_{SG,h}$ is the SGFEM solution, the enrichment function $\phi$ satisfies the general enrichment conditions \eqref{quasi}. Then, there exists $C>0$ independent of $h$ such that
    \begin{equation}
        \Vert u-u_{SG,h}\Vert_E\le Ch\left(\vert u_0\vert_{H^2(\Omega^0)}+\vert u_1\vert_{H^2(\Omega^1)}\right).
    \end{equation}
    \label{main_theorem}
\end{theorem}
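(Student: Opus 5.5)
By Céa's lemma, it suffices to build one function $v_h\in V_h=V_{\mathrm{FEM}}\oplus\operatorname{span}\{L_i(\phi-\mathcal I_h\phi): i\in I_h^{enr}\}$ with $\Vert u-v_h\Vert_E\le Ch(\vert u_0\vert_{H^2(\Omega_0)}+\vert u_1\vert_{H^2(\Omega_1)})$. The energy norm is equivalent to the $H^1$ norm, so we may measure the error in $H^1$. I would take the standard SGFEM interpolant built from the local data of Lemma~\ref{lemma4.2}:
\begin{equation*}
v_h=\mathcal I_h u+\sum_{i\in I_h^{enr}}\sigma_i L_i(\phi-\mathcal I_h\phi).
\end{equation*}
Here $\sigma_i$ is the constant from \eqref{special_sigma}. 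This is admissible, since $u\in H^2(\Omega_0\cup\Omega_1)\cap H^1_0$ is continuous for $d\le 3$, so $\mathcal I_hu\in V_{\mathrm{FEM}}$.

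The key step is a partition-of-unity splitting of the error. On the union of enriched patches, $\sum_{i}L_i\equiv1$ holds only away from the boundary of the enriched region. So I write
\begin{equation*}
u-v_h=(u-\mathcal I_hu)-\sum_{i\in I_h^{enr}}\sigma_iL_i(\mathcal I-\mathcal I_h)\phi=\sum_{i\in I_h}L_i\,\xi_i,
\end{equation*}
where $\xi_i=(\mathcal I-\mathcal I_h)(u-\sigma_i\phi)$ for $i\in I_h^{enr}$ and $\xi_i=(\mathcal I-\mathcal I_h)u$ otherwise. Because $(\mathcal I-\mathcal I_h)z_i=0$ for $z_i\in\mathbb P_1$, the enriched terms equal $(\mathcal I-\mathcal I_h)(u-z_i-\sigma_i\phi)$ on $\omega_i$. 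Lemma~\ref{lemma4.2} (second estimate) then gives
\begin{equation*}
\Vert\xi_i\Vert_{L^2(\omega_i)}+h\Vert\nabla\xi_i\Vert_{L^2(\omega_i)}\le Ch^2\bigl(\vert u_0\vert_{H^2(\omega_i^0)}+\vert u_1\vert_{H^2(\omega_i^1)}\bigr).
\end{equation*}
For non-enriched nodes, $\omega_i$ is not cut by $\Gamma$ up to a standard technicality. Since every element meeting $\Gamma$ has all its vertices enriched, any element in $\omega_i$ with $i\notin I_h^{enr}$ lies in one subdomain. Hence $u\in H^2$ there elementwise, and the usual interpolation estimate yields the same bound with $\vert u\vert_{H^2(\omega_i)}$. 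I would state this elementwise: on each element $K$, the error $u-v_h$ equals $\sum_{i\in K}L_i\xi_i$ over its three vertices. On a cut element all three vertices are enriched, and on an uncut element the enriched and non-enriched terms are both controlled.

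Next I combine the pieces with the partition-of-unity estimate. Using \eqref{partition},
\begin{equation*}
\Vert\nabla(L_i\xi_i)\Vert_{L^2(\omega_i)}\le\Vert\nabla\xi_i\Vert_{L^2(\omega_i)}+Ch^{-1}\Vert\xi_i\Vert_{L^2(\omega_i)}\le Ch\bigl(\vert u_0\vert_{H^2(\omega_i^0)}+\vert u_1\vert_{H^2(\omega_i^1)}\bigr).
\end{equation*}
I then sum the squares over $i$, using the finite overlap of patches on a quasi-uniform mesh. This gives
\begin{equation*}
\Vert\nabla(u-v_h)\Vert^2_{L^2(\Omega)}\le C\sum_i\Vert\nabla(L_i\xi_i)\Vert^2\le Ch^2\bigl(\vert u_0\vert_{H^2(\Omega_0)}+\vert u_1\vert_{H^2(\Omega_1)}\bigr)^2.
\end{equation*}
Céa's lemma then finishes the argument.

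The main obstacle is making the partition-of-unity identity legitimate at the border of the enriched region. Mixed elements have some enriched and some non-enriched vertices, and this is where blending effects appear in GFEM. Two facts resolve it. First, SGFEM uses $\phi-\mathcal I_h\phi$, which makes each enrichment term vanish at all nodes. Second, such mixed elements are uncut, so $\sigma_iL_i(\phi-\mathcal I_h\phi)$ is controlled there by $\vert\sigma_i\vert\,\Vert(\mathcal I-\mathcal I_h)\phi\Vert$ together with \eqref{sigma} and $\phi\in W^{2,\infty}$ on each side. I would check carefully that both routes give the $O(h)$ bound without nodal-value or $L^\infty(\Gamma)$ assumptions, which is exactly what Lemma~\ref{lemma4.2} is designed to provide.
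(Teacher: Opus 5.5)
Your proposal is correct and is essentially the paper's own proof. It uses the same interpolant $v=\mathcal I_hu+\sum_{i\in I_h^{enr}}\sigma_iL_i(\phi-\mathcal I_h\phi)$ and the same partition-of-unity splitting into non-enriched terms (standard interpolation on uncut patches) and enriched terms $L_i(\mathcal I-\mathcal I_h)(u-z_i-\sigma_i\phi)$ (second estimate of Lemma~\ref{lemma4.2}), followed by finite-overlap summation and C\'ea's lemma; the separate check of mixed elements in your last paragraph is not needed, since $\sum_{i\in I_h}L_i\equiv1$ on all of $\Omega$ makes the splitting an exact identity.
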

\begin{proof}
    Let 
    $v = \mathcal{I}_hu+\sum_{i\in I^{enr}_h}\sigma_iL_i(\phi-\mathcal{I}_h\phi),
    $
    where $\sigma_i$ are the constants in Lemma~\ref{lemma4.2}. We have
    \begin{equation}
    \begin{aligned}
        u - v &= \sum_{i\in I_h}L_i(u-\mathcal{I}_hu) - \sum_{i\in I^{enr}_h}\sigma_iL_i(\phi-\mathcal{I}_h\phi)\\
        &=\sum_{i\in I_h^{std}}L_i(u-\mathcal{I}_hu)+\sum_{i\in I^{enr}_h}L_i[\mathcal{I}-\mathcal{I}_h](u-z_i-\sigma_i\phi):=T_1 + T_2,
    \end{aligned}
    \end{equation}
    where $z_i$ is the polynomial in Lemma~\ref{lemma4.2}. As $I^{enr}_h=\{i\in I_h: P_i\in e_s \text{ where } e_s\cap\Gamma\neq\emptyset\}$, the patch $\omega_i$ either $\omega_i\subset\Omega_0$ or $\omega_i\subset\Omega_1$. Thus, by using the standard interpolation result and \eqref{partition}, $T_1$ is estimated as follows:
    \begin{equation}
    \begin{aligned}
        &\Vert T_1\Vert_E^2\le C\sum_{i\in I_h^{enr}}\left(\Vert L_i\Vert_{L^\infty(\Omega)}^2\vert u-\mathcal{I}_hu\vert^2_{H^1(\omega_i)}+\Vert\nabla L_i\Vert_{L^\infty(\Omega)}^2\Vert u-\mathcal{I}_hu\Vert_{L^2(\omega_i)}^2\right)\\
        &\le C\sum_{i\in I_h^{std}}\left(\vert u-\mathcal{I}_hu\vert^2_{H^1(\omega_i)}+h^{-2}\Vert u-\mathcal{I}_hu\Vert_{L^2(\omega_i)}^2\right)\le Ch^2\left(\vert u_0\vert^2_{H^2(\Omega^0)}+\vert u_1\vert^2_{H^2(\Omega^1)}\right)
        \end{aligned}
        \label{T1}
    \end{equation}
    Similarly, based on Lemma~\ref{lemma4.2}, $T_2$ yields
    \begin{equation}
        \Vert T_2\Vert^2_E\le Ch^2\left(\vert u_0\vert^2_{H^2(\Omega^0)}+\vert u_1\vert^2_{H^2(\Omega^1)}\right).
        \label{T2}
    \end{equation}
    Combining \eqref{T1} and \eqref{T2}, we have
    \begin{equation}
        \Vert u-v\Vert^2_E\le Ch^2\left(\vert u_0\vert^2_{H^2(\omega_i^0)}+\vert u_1\vert^2_{H^2(\omega_i^1)}\right).
    \end{equation}
    From  Céa's lemma we immediately have the desired results.
\end{proof}

Based on Theorem~\ref{main_theorem}, we revisit the network design defined in $\eqref{interface_neural}$. Assuming a uniform architecture and fixed random distribution initialization are adopted for all networks, the following corollary holds at the initialized state, notwithstanding their independent and random initializations:
\begin{corollary}
    Suppose that $u$ is the solution of the interface problem \eqref{model2}, and $u_{NE}$ is the initial (without training) NEFEM solution using the enrichment function \eqref{interface_neural} and \eqref{quasi}. Assume that $\partial_D\mathcal{N}_i \neq0$ for $x\in\Gamma$. Then, there exists $C>0$ independent of $h$ such that
    \begin{equation}
        \mathbb{E}_\theta[\Vert u-u_{NE}\Vert_E]\le Ch\left(\vert u_0\vert_{H^2(\Omega^0)}+\vert u_1\vert_{H^2(\Omega^1)}\right),
    \end{equation}
    where $\mathbb{E}_\theta$ is the expectation with respect to the initialized network parameters.
    \label{corollary}
\end{corollary}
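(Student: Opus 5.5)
The approach is to reduce the corollary to Theorem~\ref{main_theorem}, applied realization by realization, after checking that each initialized neural enrichment satisfies the hypotheses used in Lemma~\ref{lemma4.2}. Fix a realization $\theta$ of the initial parameters. For each enriched node $i$ set $\phi_i(x)=\mathcal{N}_i(x-p_i,D(x);\theta_i)$, with smooth activations.

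The first step is to verify the general enrichment conditions \eqref{quasi} for $\phi_i$ on the patch $\omega_i$, with constants depending on $\theta_i$.
\begin{itemize}
\item Condition (1) holds because $\mathcal{N}_i$ is smooth and $D\in W^{2,\infty}(\bar\Omega_r)$. The chain rule gives $\phi_i|_{\omega_i^r}\in W^{2,\infty}$, with the bound controlled by $\Vert\mathcal{N}_i\Vert_{C^2}$ on the compact range of $(x-p_i,D(x))$.
\item Condition (2) follows from $[D]=0$ together with continuity of $\mathcal{N}_i$.
\item For condition (3), the tangential part of $\nabla\phi_i$ is continuous. So on $\Gamma$ we have $[\partial_n\phi_i]=\partial_D\mathcal{N}_i\,[\partial_n D]$, since the $\partial_x\mathcal{N}_i$ part has no jump. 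Using $\partial_D\mathcal{N}_i\neq0$ on $\Gamma$, continuity, and compactness of $\bar\Gamma_i$, we get $0<\kappa_1(\theta_i)\le|[\partial_n\phi_i]|\le\kappa_2(\theta_i)$.
\end{itemize}

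The second step is to redo the proof of Lemma~\ref{lemma4.2} with a node-dependent enrichment $\phi=\phi_i$ on each patch. Nothing there used that $\phi$ is global: $\sigma_i$ is defined by \eqref{special_sigma} with $\phi_i$, and \eqref{sigma}, \eqref{psi_estimate} and \eqref{interpolation} only involve $\phi_i$ on $\omega_i$.

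A point to check is that the $H^2$ and $H^{1/2}$ norms of $\phi_i$ enter with the correct $h$-scaling. The function $\phi_i$ is not $h$-rescaled; it lives on the physical patch. Hence $|\phi_i|_{H^2(\omega_i^r)}\le Ch\Vert\phi_i\Vert_{W^{2,\infty}}$, and $\Vert[\partial_n\phi_i]\Vert_{H^{1/2}(\Gamma_i)}$ scales like the corresponding quantity for $u$ in Lemma~\ref{lemma1}. Combined with $|\sigma_i|\lesssim\kappa_1^{-1}(|u_0|_{H^2(\omega_i^0)}+|u_1|_{H^2(\omega_i^1)})$, each patch estimate takes the form $C(\theta_i)h^{4-2l}(\ldots)$. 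The local constant $C(\theta_i)$ is polynomial in $\kappa_1(\theta_i)^{-1}$ and $\Vert\mathcal{N}_i(\cdot;\theta_i)\Vert_{C^2}$ over the relevant input range.

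The third step assembles the interpolant $v=\mathcal{I}_hu+\sum_i\sigma_iL_i(\phi_i-\mathcal{I}_h\phi_i)$ exactly as in the proof of Theorem~\ref{main_theorem}. Finite overlap of patches and Céa's lemma then give
\[
\Vert u-u_{NE}\Vert_E\le C\max_{i}C(\theta_i)^{1/2}\,h\left(\vert u_0\vert_{H^2(\Omega^0)}+\vert u_1\vert_{H^2(\Omega^1)}\right).
\]

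The last step takes $\mathbb{E}_\theta$. This is the main obstacle in two respects.

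\emph{Dependence on the number of nodes.} The number of enriched nodes grows like $h^{-(d-1)}$, so a bound through $\max_i C(\theta_i)$ could pick up $h$-dependence. To avoid this, I will instead keep the patchwise sum,
\[
\Vert u-v\Vert_E^2\lesssim\sum_i C(\theta_i)h^2(\ldots)_i,
\]
and bound the expectation using Jensen's inequality, $\mathbb{E}\Vert\cdot\Vert_E\le(\mathbb{E}\Vert\cdot\Vert_E^2)^{1/2}$. The expectation then passes inside the sum. Since all networks share the same architecture and initialization distribution, $\mathbb{E}[C(\theta_i)]=\bar C$ is the same for every $i$ once the inputs are normalized by the shift $x-p_i$. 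Summing the local seminorms then gives the global bound.

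\emph{Finiteness of $\bar C$.} We need $\mathbb{E}[\kappa_1(\theta)^{-2}]<\infty$ and finite moments of the $C^2$ norms. The latter follow for Gaussian or uniform weights with tanh or sigmoid activations, since the derivatives are polynomially bounded in the weights. The former requires interpreting the assumption $\partial_D\mathcal{N}_i\neq0$ quantitatively, i.e.\ that $|\partial_D\mathcal{N}_i|$ has an integrable inverse second moment on $\Gamma$. I would state this as the precise reading of the hypothesis.
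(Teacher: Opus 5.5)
Your route is the same as the paper's, and you handle the $h$-dependence more carefully. There is, however, a genuine gap in the final expectation step, and the paper has the same gap. The paper's whole proof applies Theorem~\ref{main_theorem} for each fixed $\theta$, writes $\Vert u-u_{NE}\Vert_E\le C(\theta)h(\vert u_0\vert_{H^2(\Omega^0)}+\vert u_1\vert_{H^2(\Omega^1)})$ with $C(\theta)$ ``a random variable drawn from a fixed distribution'', and takes expectations.

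Your first three steps make explicit what the paper leaves implicit: checking \eqref{quasi} for $\phi_i$ via $[\partial_n\phi_i]=\partial_D\mathcal{N}_i\,[\partial_nD]$, rerunning Lemma~\ref{lemma4.2} with node-dependent $\phi_i$, and assembling $v$. Keeping the patchwise sum and using $\mathbb{E}\Vert\cdot\Vert_E\le(\mathbb{E}\Vert\cdot\Vert_E^2)^{1/2}$ is a real improvement. A single constant valid on all $\mathcal{O}(h^{1-d})$ enriched patches behaves like $\max_iC(\theta_i)$, so the paper's claim that $C(\theta)$ has an $h$-independent law is unjustified. One minor correction: the $C(\theta_i)$ are not identically distributed, because $\kappa_1(\theta_i)$ depends on where $\Gamma_i$ lies relative to $p_i$ and on $D$ along $\Gamma_i$. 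What you need is $\sup_i\mathbb{E}[C(\theta_i)]<\infty$, for instance by dominating every $C(\theta_i)$ by one functional of $\mathcal{N}_i$ over a fixed compact input set.

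\textbf{The gap.} The hypothesis $\partial_D\mathcal{N}_i\neq0$ on $\Gamma$ gives no moment bound on $\kappa_1(\theta_i)^{-1}$. Your proposed ``quantitative reading'' is a strictly stronger assumption, and it fails for the initializations you mention.
\begin{enumerate}
\item Fix $x_0\in\Gamma_i$ and set $y_0=(x_0-p_i,D(x_0))$.
\item Conditionally on the hidden layers, $\partial_D\mathcal{N}_i(y_0)=w_{\mathrm{out}}^{T}g(y_0)$ is a linear combination of independent symmetric Gaussian or uniform output weights. For $g(y_0)\neq0$ its density is positive at $0$, so $\mathbb{E}\,\vert\partial_D\mathcal{N}_i(y_0)\vert^{-1}=\infty$.
\item Let $\kappa_2$ be the bound on $\vert[\partial_nD]\vert$ in \eqref{quasi}. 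Then $\kappa_1(\theta_i)\le\vert[\partial_n\phi_i](x_0)\vert\le\kappa_2\vert\partial_D\mathcal{N}_i(y_0)\vert$, so already $\mathbb{E}[\kappa_1(\theta_i)^{-1}]=\infty$.
\item Conditioning on the event that $\partial_D\mathcal{N}_i$ does not vanish on $\Gamma_i$ generically does not help. Values of $\min_{\Gamma_i}\vert\partial_D\mathcal{N}_i\vert$ in $(0,t)$ still occur with probability of order $t$.
\end{enumerate}
Hence taking expectations of the realization-wise bound gives $+\infty$ on the right-hand side, in your argument as in the paper's. The true error stays finite only because $V_{FEM}\subset V_N$, and that bound is of FEM type.

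What you have actually proved is the corollary under the extra hypothesis $\sup_i\mathbb{E}_\theta[C(\theta_i)]<\infty$, and it should be stated that way. Covering standard random initializations needs a different final step. One option is to fall back to $\sigma_i=0$ on patches where $\kappa_1(\theta_i)$ is small, combined with a tail bound $\mathbb{P}(\kappa_1(\theta_i)<t)\le ct$. It is not clear this recovers the rate $h$ under piecewise $H^2$ regularity alone, since on fallback patches only the non-enriched local error near $\Gamma$ is available.
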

\begin{proof}
    Based on Theorem~\ref{main_theorem}, we have
    \begin{equation*}
        \Vert u-u_{NE}\Vert_E\le C(\theta)h\left(\vert u_0\vert_{H^2(\Omega^0)}+\vert u_1\vert_{H^2(\Omega^1)}\right),
    \end{equation*}
    where $C(\theta)$ is a random variable drawn from a fixed distribution. The result follows directly by taking its expectation.
\end{proof}
% \begin{remark}
% Corollary~\ref{corollary} implies that the training process can be proceed from a more accurate baseline through mesh refinement.
% \end{remark}
\section{Numerical results}
\label{sec:results}
In this section, we provide numerical examples to validate the efficiency and accuracy of our proposed approach. We construct all NNs in PyTorch, and apply the Adam optimizer for training. To accelerate convergence, we employ an adaptive activation function inspired by the work of Karniadakis et al. \cite{jagtap2020adaptive}, which is defined as follows:
\begin{equation}
\sigma(na\mathcal{L}_k(z^{k-1})),
\end{equation}
where $n\ge1$ is a scale factor, $a$ is a trainable parameter, $\mathcal{L}_k$ denotes the affine transformation of layer $k$, and $\sigma$ is the common activation function, e.g., $\sin$ or $\operatorname{Tanh}$. By modifying the slope of the activation function, this parameter enables the enrichment basis to capture high-frequency information. For the final output layer $\mathcal{L}_{-1}$, we retain only the weights $w_{-1}$ and omit the bias term $b_{-1}$, as the constant shift is inherently captured by the standard FE basis. In terms of integration, we apply a 78-point Gaussian quadrature rule in each triangular element, which achieves an algebraic precision of degree 20. The coordinates and weights are sourced directly from Liu and Liu \cite{liu2024symmetric}. All numerical experiments were conducted using an NVIDIA H100 GPU and an AMD EPYC 7773X CPU.

\subsection{Example 1: A Problem with oscillated coefficients}
In this test case, we consider the following problem on $\Omega=(0, 1)^2$ with oscillated coefficient $a_\epsilon$:
\begin{equation}
\begin{aligned}
    &-\nabla\cdot \big(a_\epsilon\nabla u\big) =-1, \quad
     a_\epsilon(x,y) = \frac{1}{(2 + P\sin(2\pi x/\epsilon))(2 + P\sin(2\pi y/\epsilon))},
\end{aligned}
\label{ex1}
\end{equation}
where $P=1.5$ and $\epsilon=0.02$. Homogeneous Dirichlet conditions $ u|_{\partial\Omega} = 0$ are applied. 
% The oscillation behavior of $a_\epsilon$ is illustrated in Figure \ref{fig:aepsilon}.
In the NEFEM implementation, we enriched all interior nodes and trained the network via Algorithm \ref{algo:NeFEM} for 60 epochs and 200 epochs, respectively, with a learning rate of 0.001. For each enrichment function, we employ a compact neural network with a $[2, 20, 20, 1]$ architecture and sine activation functions. The scale factor are set to $n_1=150$ for the first hidden layer, and $n_2=2$ for the second hidden layer. The numerical performance is evaluated using the $L^2$-error, denoted by $e_{L^2}$, and the $H^1$-semi-norm error, denoted by $e_{H^1}$. As problem \eqref{ex1} lacks an analytical solution, we adopt the FEM solution obtained on a highly refined mesh ($h=1/2048$) as the reference. 
% \begin{figure}[htpb]
%     \centering
%     \includegraphics[width=0.6\linewidth]{figure/aepsilon.png}
% \caption{Left: Profile of $a_\epsilon$ for Example 1 with $\epsilon=0.02$. Right: A zoomed-in top view on the region $[0.5, 0.6]\times[0.5, 0.6]$.}
% \label{fig:aepsilon}
% \end{figure}

Table \ref{tab:nefem_fenics} compares the results of NEFEM with those of the standard FEM computed using FEniCS \cite{AlnaesEtal2015, AlnaesEtal2014}. On a relatively coarse $32\times32$ mesh with only 2050 DoFs, NEFEM already attains better approximations than that of the standard FEM on a much finer $512\times512$ mesh with 263,169 DoFs in both $L^2$-norm and $H^1$-semi-norm. In addition, the computational time of NEFEM is less than half of that required by FEM. Moreover, increasing the number of training epochs further improves the accuracy of NEFEM. With $200$ epochs, the $L^2$-error is more than one order of magnitude lower than that of the standard FEM.
\begin{table}[htbp]
  \centering
\caption{Comparison of NEFEM and FEM results generated from FEniCS. The errors $e_{L^2}$, $e_{H^1}$, and computational times are averaged over 6 independent runs.}
  \label{tab:nefem_fenics}
  \begin{tabular}{c c c c c c c}
    \toprule
           & Mesh & DoFs & Epochs &$e_{L^2}$ & $e_{H^1}$ & Time (s) \\
    \midrule
    NEFEM  & $32\times 32$   & 2\,050 & 60 & $6.75\cdot 10^{-4}$& $4.54\cdot 10^{-2}$& 2.48  \\
    FEM & $512\times 512$ & 263\,169 & -- &$1.06\cdot 10^{-3}$ & $4.66\cdot 10^{-2}$ & 6.48 \\
    NEFEM  & $32\times 32$   & 2\,050 & 200 & $1.06\cdot 10^{-4}$ & $2.46\cdot 10^{-2}$ & 7.96  \\
    \bottomrule
  \end{tabular}
\end{table}

Figure~\ref{fig:error1}(a) depicts the error evolution during the training across two mesh levels. All experiments are based on a same network structure and learning rate. It is worth emphasizing that the proposed method does not require a large number of training epochs. In our experiments, highly accurate results are typically obtained within $200$ epochs. Furthermore, let $H=DAD$, where $D$ is a diagonal matrix with $D_{ii}=A_{ii}^{-1/2}$. Define the scaled condition number $\mathfrak{R}(A)$ of $A$ by $\mathfrak{R}(A):=\kappa_2(H)$, where $\kappa_2(H)=\Vert H\Vert_2\Vert H^{-1}\Vert_2$ is the condition number of $H$ based on the $\Vert\cdot\Vert_2$ vector norm. As noted in \cite{babuvska2012stable}, the scaled condition number is a standard metric in the SGFEM literature. We employ the scaled condition number to numerically evaluate the linear independence of the enrichment functions. Figure \ref{fig:error1}(b) presents the scaled condition number evolution of NEFEM during the training process. The overall condition numbers scale as $\mathcal{O}(h^{-2})$, and remain stable throughout training, indicating that the enrichment functions are linearly independent.
\begin{figure}[htbp]
    \centering
    \begin{minipage}{0.48\textwidth}
        \centering
        \begin{overpic}[width=0.7\linewidth]{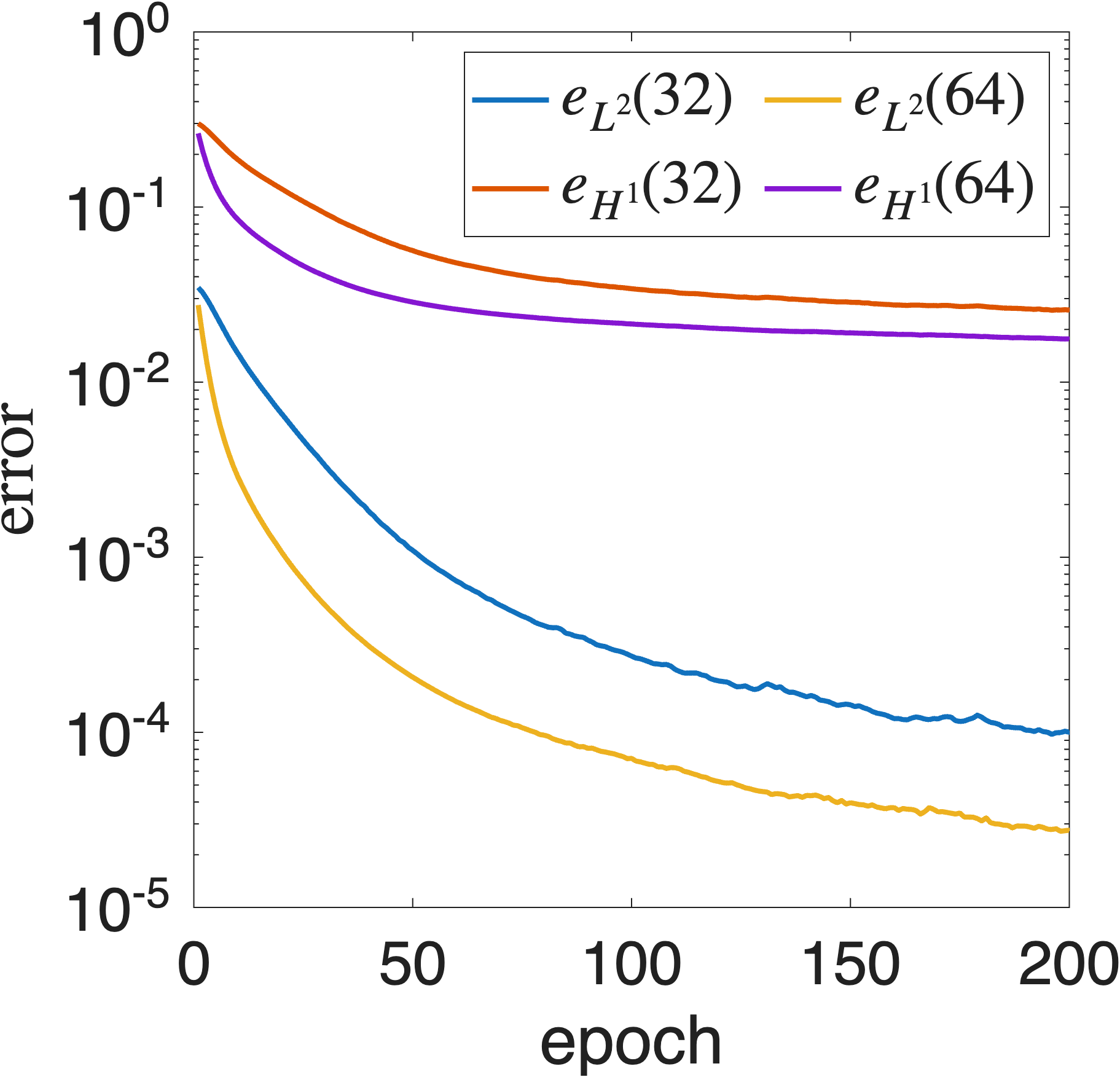}
            \put(-10,90){(a)}
        \end{overpic}
    \end{minipage}
    \hfill
    \begin{minipage}{0.48\textwidth}
        \centering
        \begin{overpic}[width=0.7\linewidth]{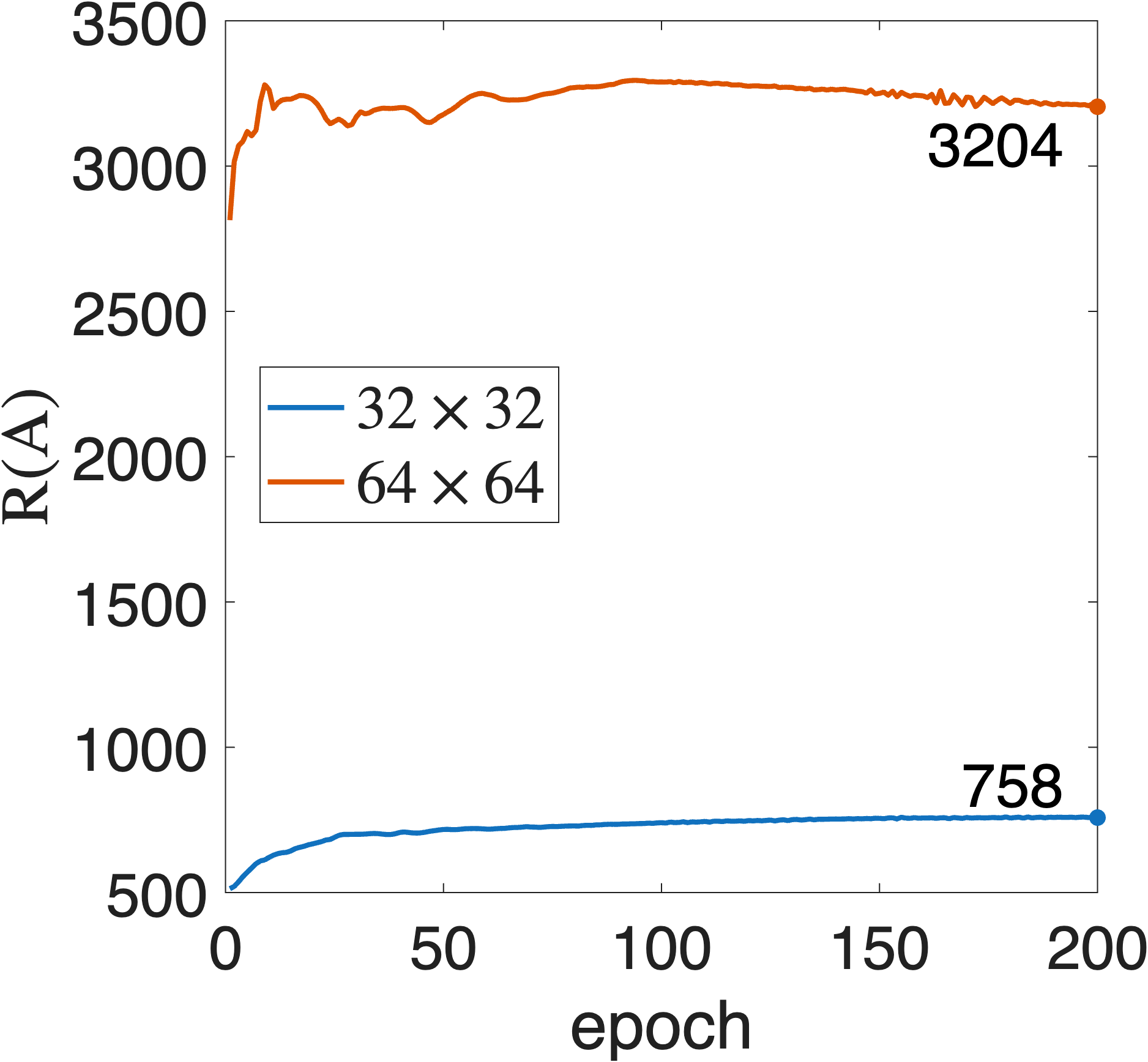}
            \put(-10,90){(b)}
        \end{overpic}
    \end{minipage}
    \caption{Evolution of the error (a) and the scaled condition number (b) during training on two different mesh levels. The final values of $\mathfrak{R}(A)$ are annotated in the figure.}
    \label{fig:error1}
\end{figure}
\subsection{Example 2: A problem with local oscillations}
This test case presents the numerical performance of the error estimator and the adaptivity Algorithm \ref{adaptive}. We consider the following exact solution, which oscillates locally
    \begin{equation}
    \begin{aligned}
        u(x,y) = \big( \sin(2\pi x) +  g(x)\sin(50\pi(x - 0.5)) \big)
        \big( \sin(2\pi y) + g(y)\sin(50\pi(y - 0.5)) \big),
    \end{aligned}
    \label{ex 2}
    \end{equation}
where $g(x)=e^{-100(x - 0.5)^2}$. The diffusion coefficient $a$ is set as one. The source term $f$ is calculated from the analytical solution. As shown in Figure \ref{fig:ex2} (a), \eqref{ex 2} is smooth at the four corners but exhibits oscillatory behavior within a cross-shaped region. We employ the estimator to select enriched nodes, and train the network via Algorithm \ref{adaptive} for 200 epochs with a learning rate of 0.001. The marking parameters are set to $\alpha_1=\alpha_2=0.6$, and the adaptive epochs are set to $H_1=H_2=50$. For each enrichment function, we employ the same network architecture and activation functions as those in Example 1.
\begin{figure}[htbp]
    \centering
    \begin{minipage}{0.45\textwidth}
        \centering
        \begin{overpic}[width=0.8\linewidth]{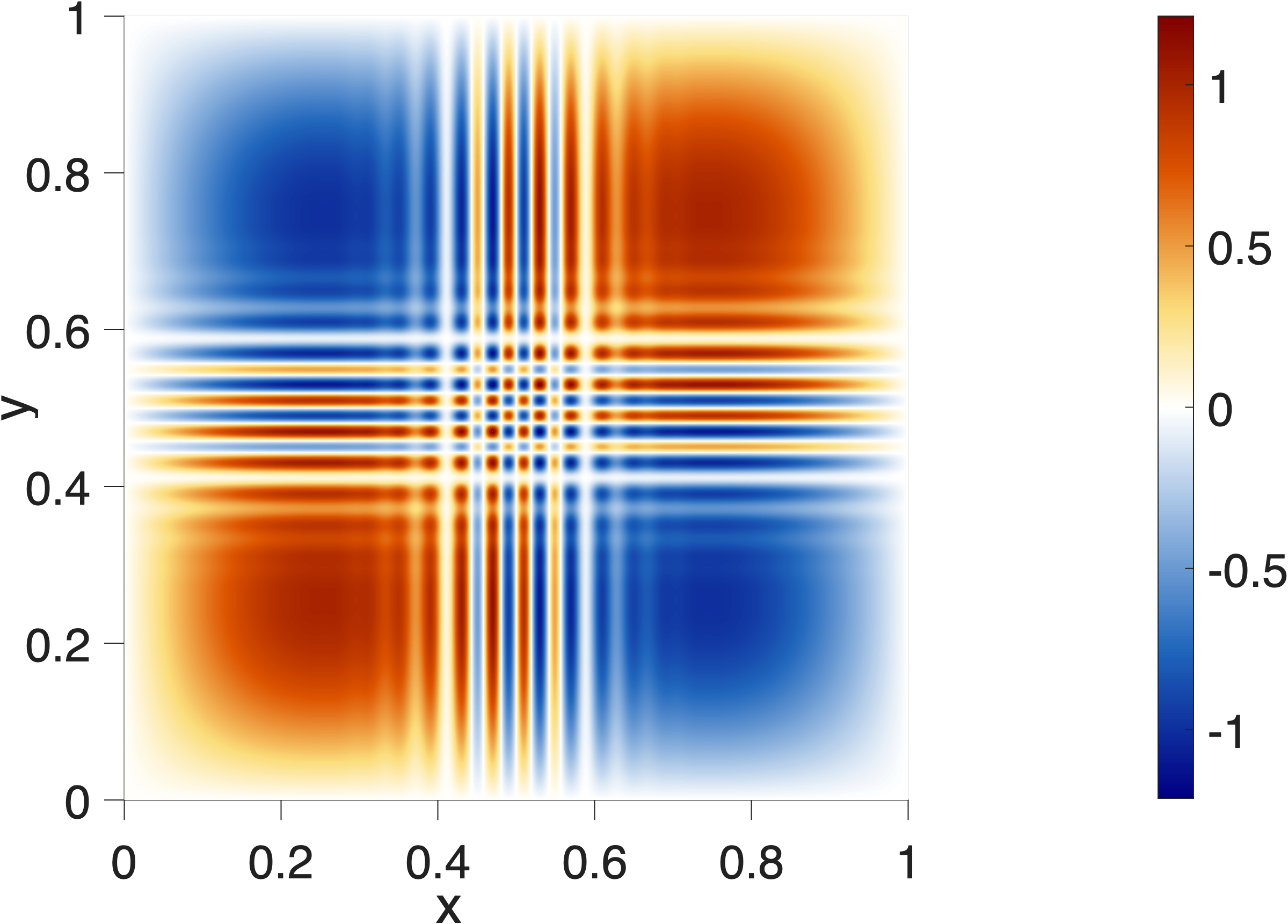}
            \put(-10,80){(a)}
        \end{overpic}
    \end{minipage}
    \hfill
    \begin{minipage}{0.45\textwidth}
        \centering
        \begin{overpic}[width=0.8\linewidth]{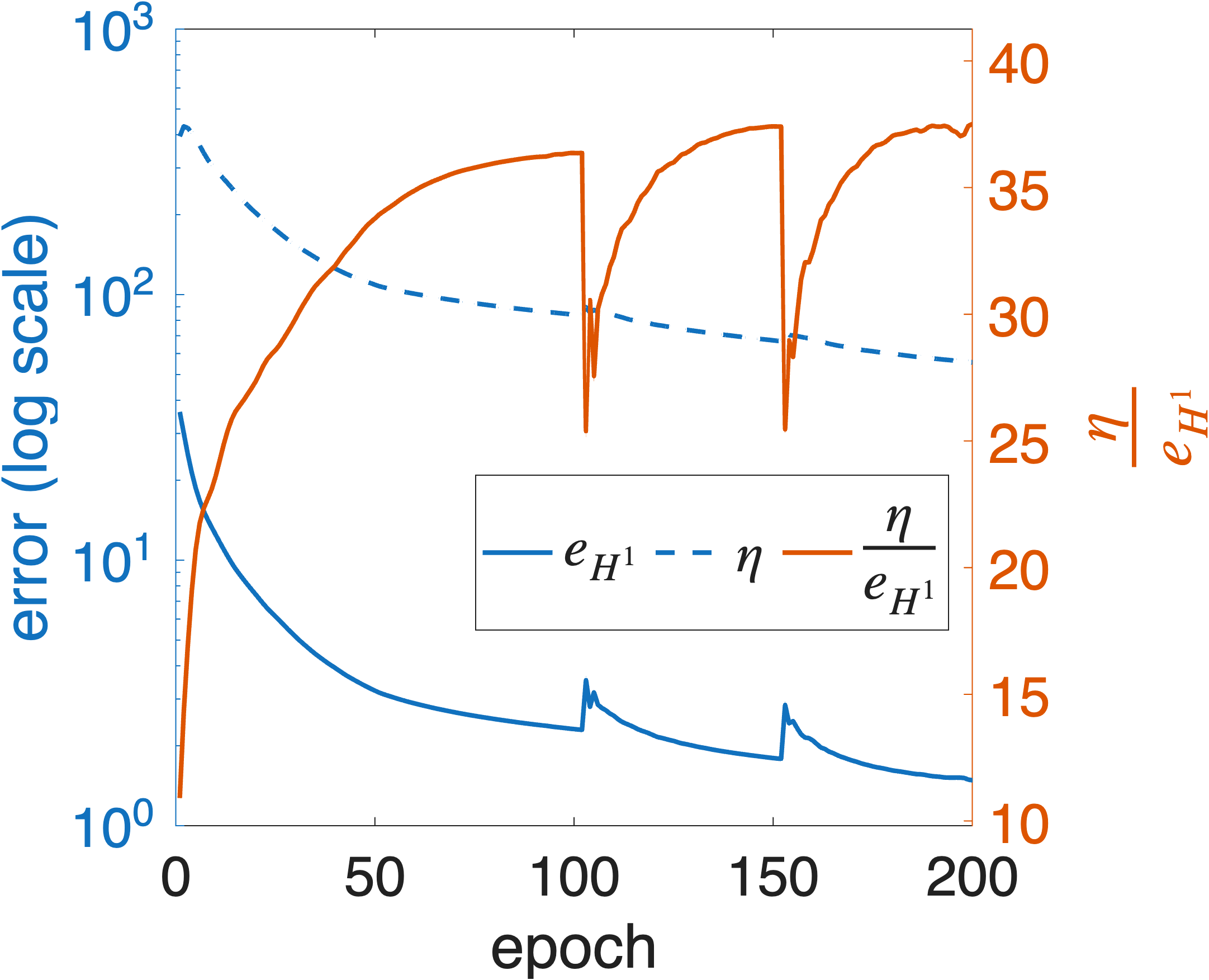}
            \put(-10,80){(b)}
        \end{overpic}
    \end{minipage}
    \caption{(a) Profile of $u$. (b) Evolution of $e_{H^1}$ and the estimator on a $32\times32$ mesh.}
    \label{fig:ex2}
\end{figure}
\begin{figure}[htbp]
    \centering
    \begin{minipage}{0.32\textwidth}
        \centering
        \begin{overpic}[width=\linewidth]{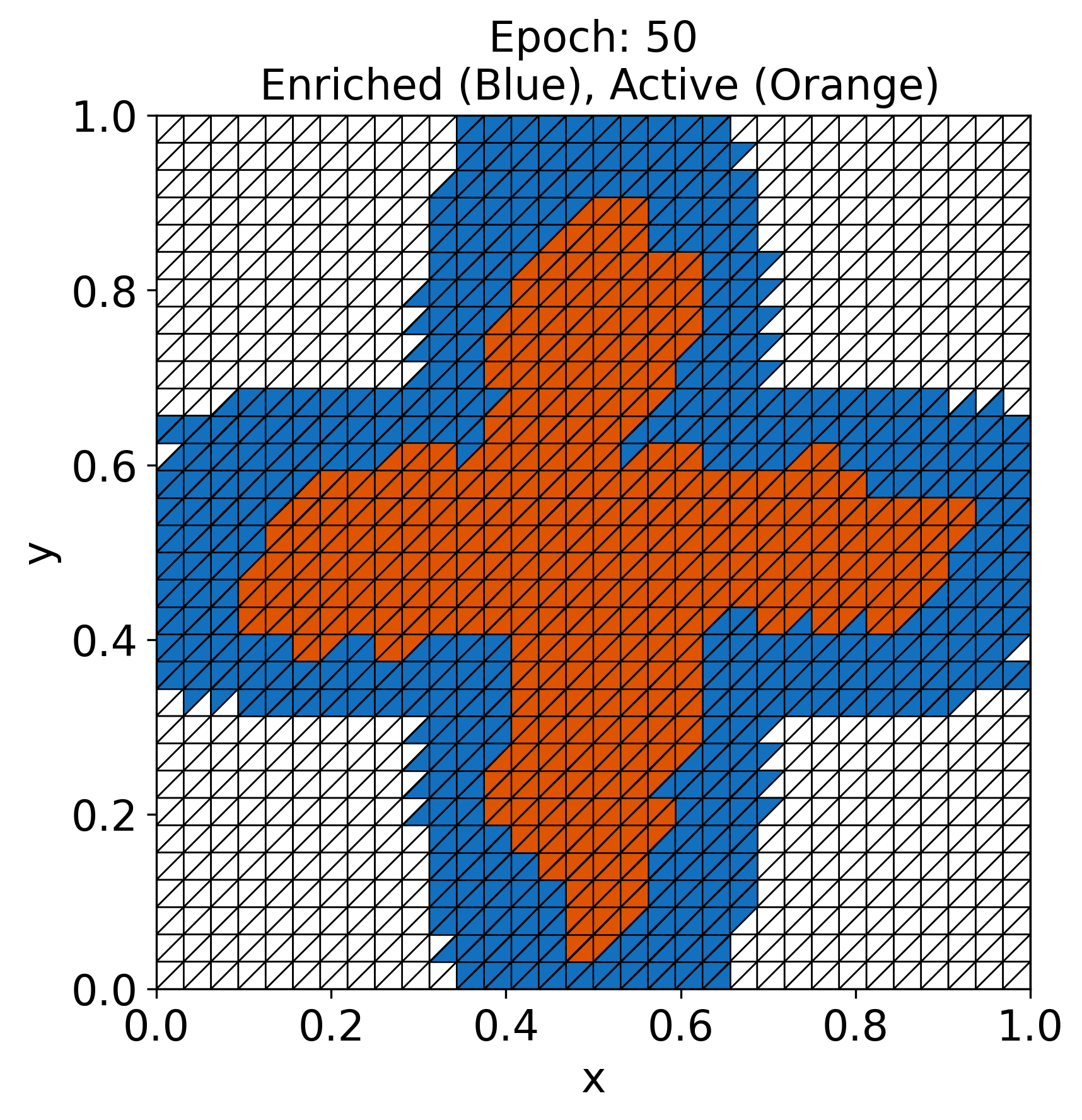}
            \put(-10,90){(a)}
        \end{overpic}
    \end{minipage}
    \hfill
    \begin{minipage}{0.32\textwidth}
        \centering
        \begin{overpic}[width=\linewidth]{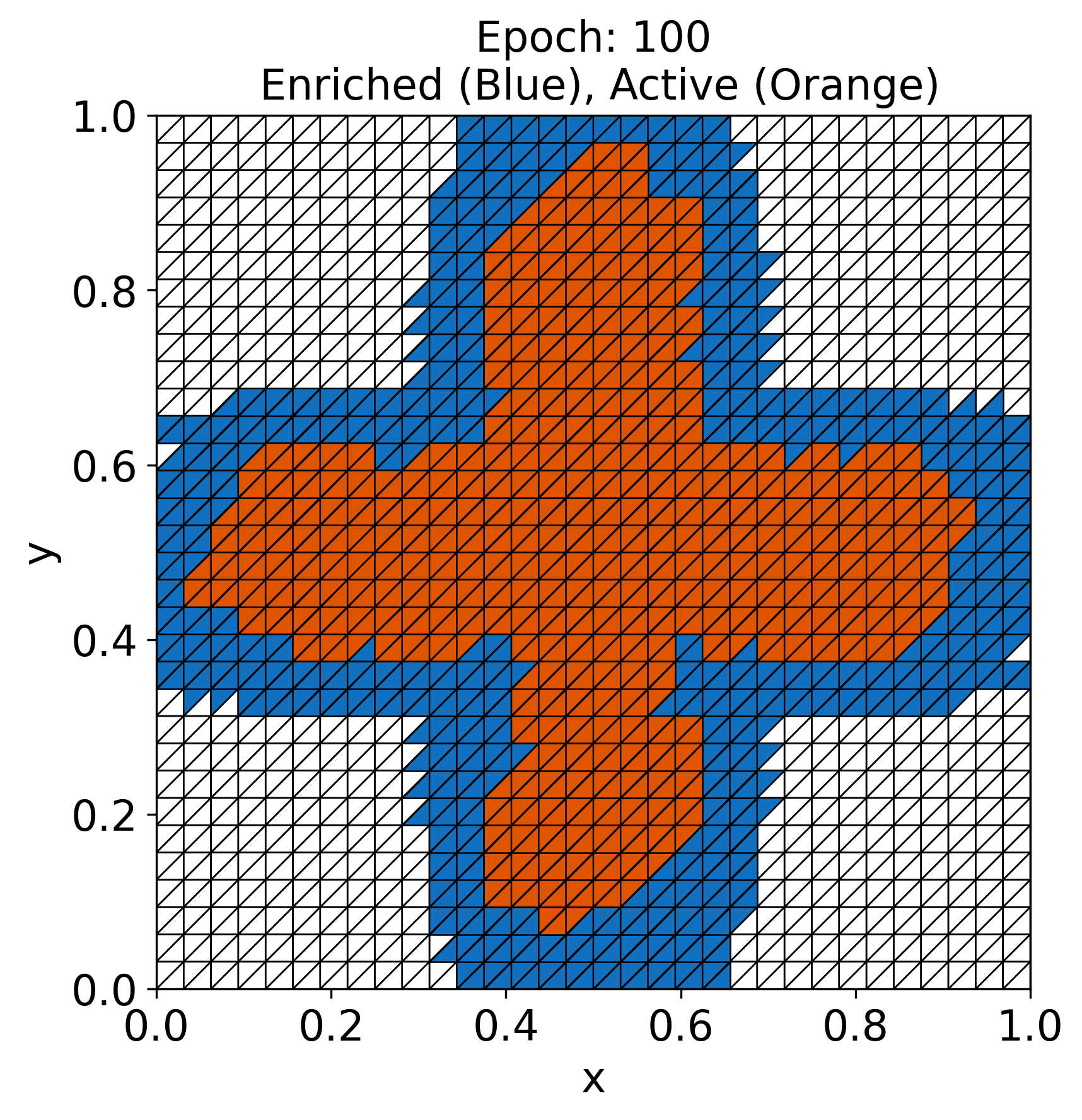}
            \put(-10,90){(b)}
        \end{overpic}
    \end{minipage}
    \hfill
    \begin{minipage}{0.32\textwidth}
        \centering
        \begin{overpic}[width=\linewidth]{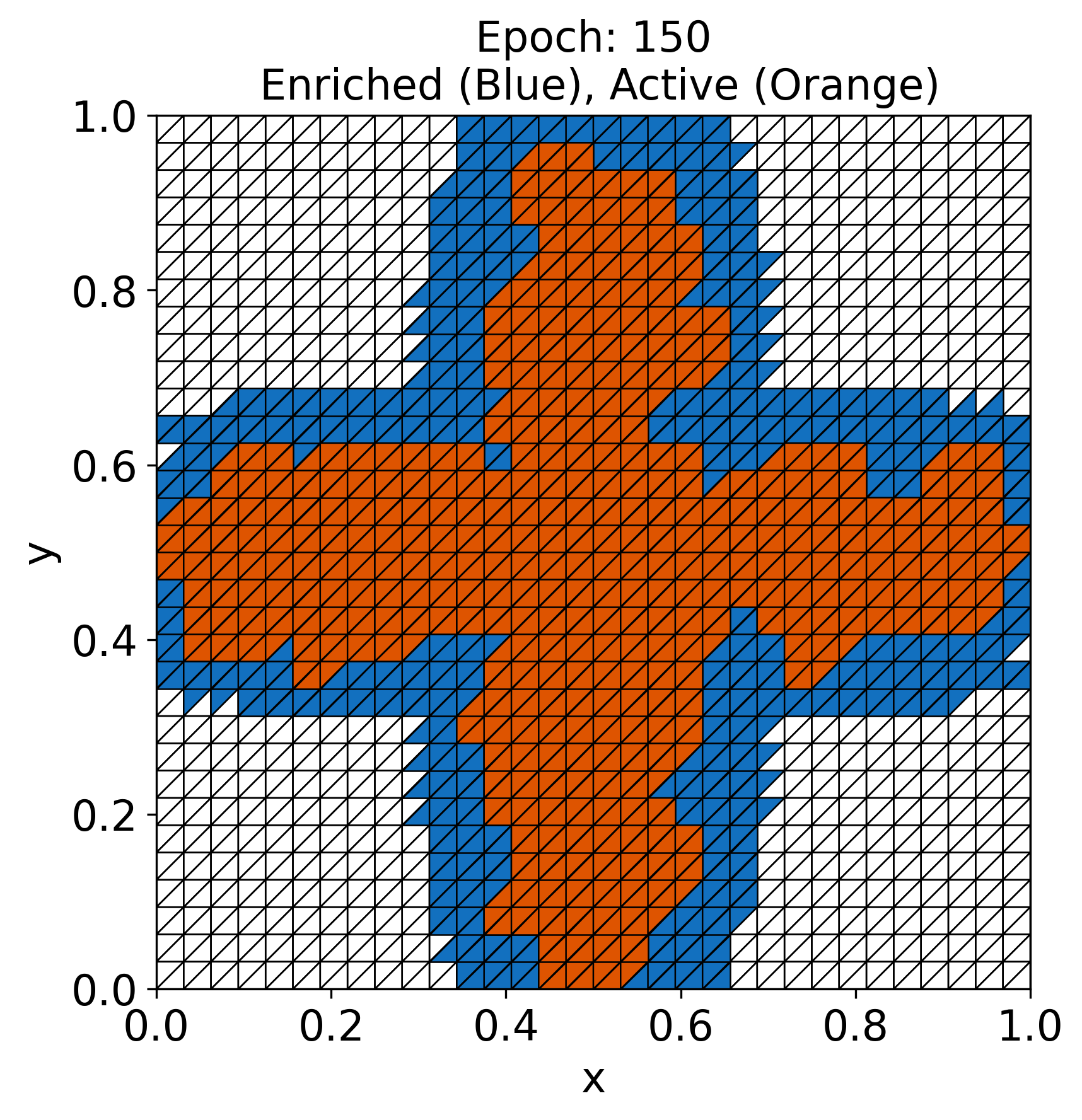}
            \put(-10,90){(c)}
        \end{overpic}
    \end{minipage}
    \caption{Distributions of enriched (blue) and active (orange) nodes at epochs 50, 100, and 150, respectively.}
    \label{fig:local}
\end{figure}

Figure \ref{fig:ex2} (b) illustrates the evolution of $e_{H^1}$ and the estimator $\eta=\sum_K\eta_K$ during training. As discussed in Section \ref{sec:error estimates}, the performance of the estimator dependents on the updated enrichment functions. To evaluate its robustness, we track the evolution of the effectivity index $\eta/e_{H^1}$ during training. A strongly fluctuated effectiveness index is undesirable, as it indicates that the estimator fails to accurately track the true error. From the right axis (orange) of Figure \ref{fig:ex2} (a), while the true error $e_{H^1}$ decreases by a factor of approximately 25, the effectivity index only increases by a factor of roughly 3 and remains well-bounded. This demonstrates that the estimator is sharp and numerically robust throughout the training. In Figure \ref{fig:local} (b)-(d), we present distributions of enriched nodes and active nodes for each selections. The enriched nodes concentrate along the cross-shaped region, which aligns well with the localized oscillations of the solution. Table~\ref{tab:adaptivity} compares the results of the adaptive NEFEM with those of the standard FEM computed using FEniCS. The adaptive NEFEM achieves a better balance between computational efficiency and accuracy.
\begin{table}[htbp]
  \centering
\caption{Comparison of adaptive NEFEM and FEM results. Adaptive parameters are set to  $\alpha:=\alpha_1=\alpha_2$, $H:=H_1=H_2$. The errors $e_{L^2}$, $e_{H^1}/\vert u\vert_{H^1}$, and computational times are averaged over 6 independent runs.}
  \label{tab:adaptivity}
  \begin{tabular}{c c c c c c c c}
    \toprule
           & Mesh & Epochs & $\alpha$ & $H$& $e_{L^2}$ & $e_{H^1}/\vert u\vert_{H^1}$ & Time (s) \\
    \midrule
    NEFEM  & $32\times 32$ & 200 & 0.6& 50 &$4.58\cdot 10^{-3}$ & $3.66\cdot 10^{-2}$ & 6.42  \\
    NEFEM  & $32\times 32$ & 200& 1.0 & -- & $3.12\cdot 10^{-3}$ & $3.00\cdot 10^{-2}$ &  8.04 \\
    FEM & $512\times 512$ & -- & -- & -- &$2.84\cdot 10^{-3}$ & $9.81\cdot 10^{-2}$ & 6.89 \\
    \bottomrule
  \end{tabular}
\end{table}
\subsection{Example 3: An Interface problem}
In this example, we consider the interface problem. The square $\Omega=[-1,1]^2$ is split into a circle $\Omega_1=\mathcal{B}_R(x_m)$, where $R=0.5$ and $x_m = (0, 0.15)$ and $\Omega_2=\Omega\backslash\Omega_1$. The analytical solution is
\begin{equation}
    u(x)=\left\{\begin{aligned}
        &-2a_2\Vert x-x_m\Vert^4,&&\quad x\in\Omega_1,\\
        &-a_1\Vert x-x_m\Vert^2 + \frac{1}{4}a_1 - \frac{1}{8}a_2,&&\quad x\in\Omega_2,
    \end{aligned}
    \right.
\end{equation}
where $a_1=0.1$ and $a_2=1.0$. The right-hand side $f_i=-\nabla a_i\cdot\nabla u$ and Dirichlet boundary data are defined from the exact solution. In this case, we \textit{only} enrich those elements intersected by the interface. For each enrichment function, we employ a neural network with a $[3, 20, 20, 1]$ architecture and sine activation function. The scale factor are set to $n_1=10$ and $n_2=2$. The additional dimension of the input, as discussed beforehand, is the distance function $D(x)=\vert x-x_m\vert$. In Figure \ref{fig:interface1} (a), we plot the expectations of $L^2$ errors $\mathbb{E}_{L^2}$ and energy norm errors $\mathbb{E}_E$ obtained on several mesh levels at the initial stage. Errors on each mesh levels are averaged over 500 independent runs. This verifies the $\mathcal{O}(h)$ results in Corollary \ref{corollary}. We present the evolution of the error during training on different mesh levels in Figure \ref{fig:interface1} (b). This shows that the training process can be proceed from a more accurate baseline through mesh refinement. Note that the error decreases rapidly and then remains stable. This is because we only enrich the elements intersected by the interface.
\begin{figure}[htbp]
    \centering
    \begin{minipage}{0.48\textwidth}
        \centering
        \begin{overpic}[width=0.7\linewidth]{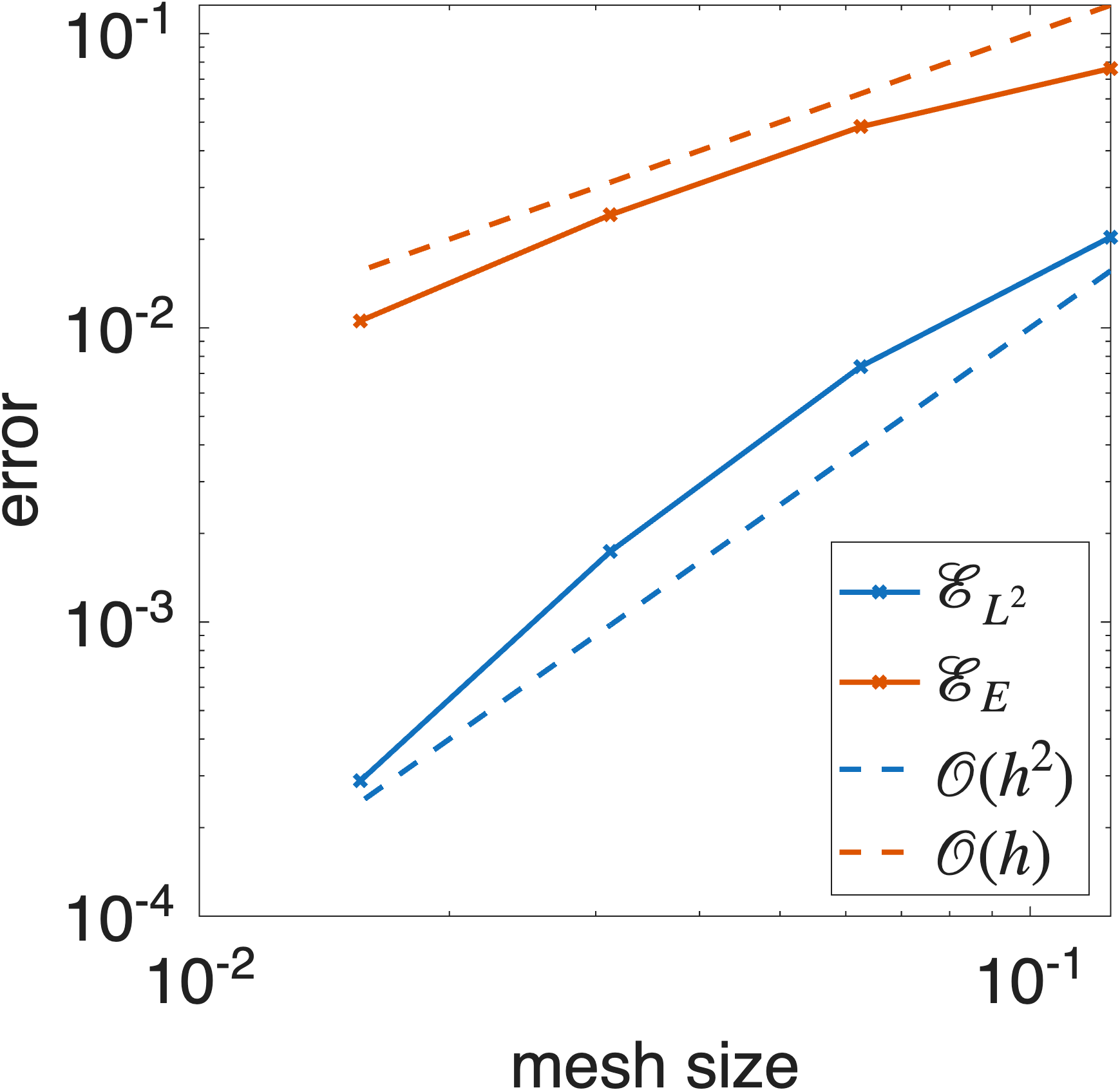}
            \put(-10,90){(a)}
        \end{overpic}
    \end{minipage}
    \hfill
    \begin{minipage}{0.48\textwidth}
        \centering
        \begin{overpic}[width=0.7\linewidth]{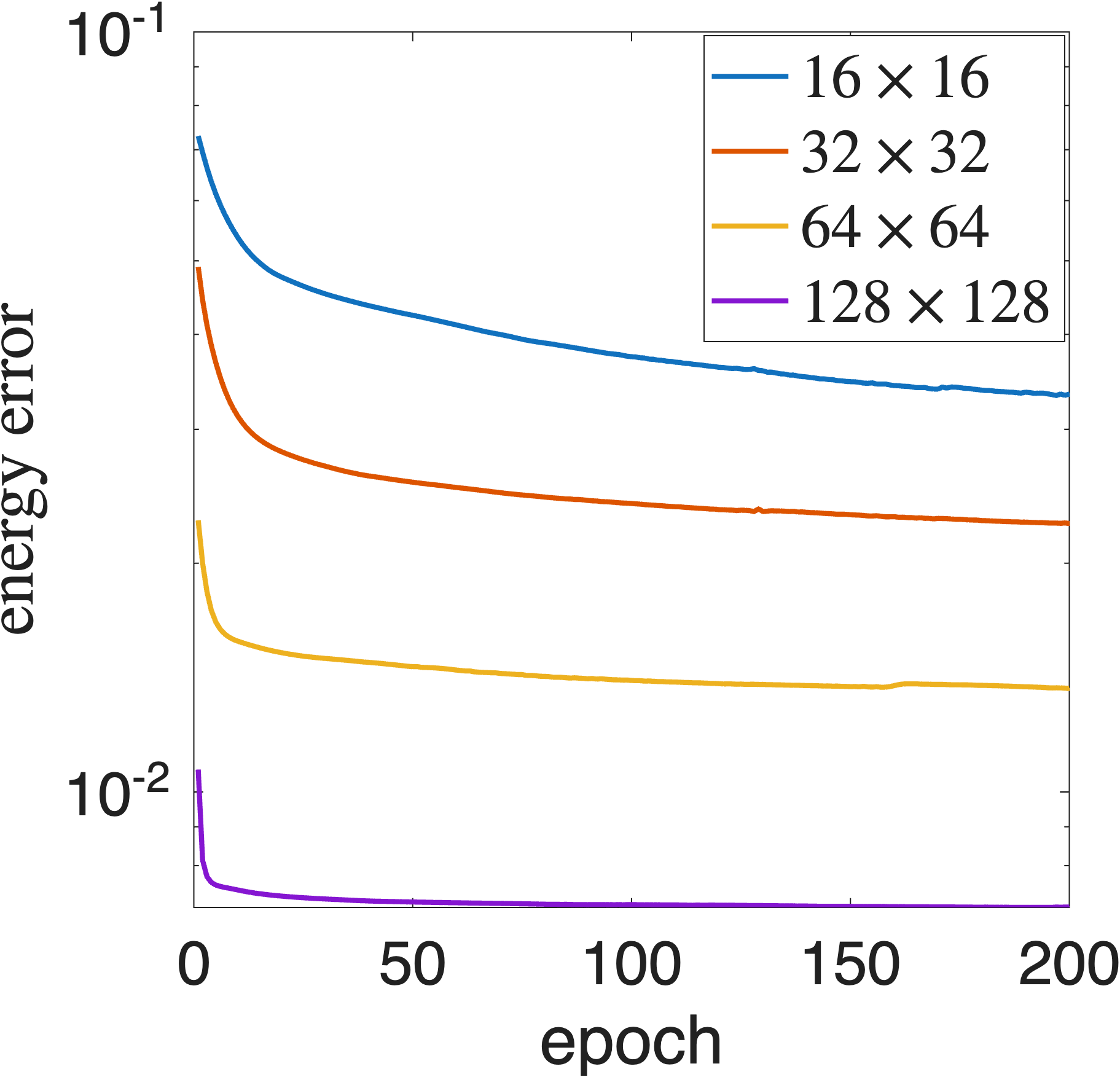}
            \put(-10,90){(b)}
        \end{overpic}
    \end{minipage}
    \caption{(a) Convergence order of the expectation error averaged over 500 independent runs at the initial stage. (b) Energy error evolution on different mesh levels.}
    \label{fig:interface1}
\end{figure}

% \subsection{Example 4: An interface problem with strong oscillations}
% In this case, we consider the analytical solution below
% \begin{equation}
% u(x,y)=
% \begin{cases}
% E + (x-0.53)E\phi, & x < 0.53, \\
% E + r(x-0.53)E\phi, & x > 0.53,
% \end{cases}
% \end{equation}
% where $E = \sin\bigl(\pi(x-0.03)\bigr)\sin(\pi y)$, $\phi = 0.05\,\sin\left(\frac{2\pi x}{\epsilon}\right)\cos\left(\frac{2\pi y}{\epsilon}\right)$ with $\epsilon=0.02$ and $r=100$. The profile of $u$ are shown in Figure \ref{}.

\section{Conclusions}
\label{sec:conclusions}
In this paper we have proposed the Neural Enrichment Finite Element Method, a hybrid framework based on SGFEM with the adaptivity of neural-network enrichment functions. The method constructs problem-dependent local approximation spaces through a Ritz-based training procedure while retaining compatibility with standard finite element solvers. For elliptic problems with localized oscillations, we introduced a residual-based adaptive strategy and established the reliability and local efficiency of the corresponding estimator. For interface problems, we showed that enrichment functions satisfying general interface conditions yield optimal energy-norm convergence without imposing additional regularity assumptions beyond piecewise $H^2$-regularity; this analysis also motivates the proposed interface-aware network architecture. The numerical experiments confirm that NEFEM can achieve substantially improved accuracy with far fewer degrees of freedom than standard FEM, while maintaining stable conditioning during training. 

Future work will extend the present framework to time-dependent problems, such as parabolic equations, and to mixed formulations, such as the Stokes problem. Another promising direction is the application of NEFEM to multiphysics models, including Stokes--Darcy systems and fluid--structure interaction problems.
% \section*{Acknowledgments}
% We would like to acknowledge the assistance of volunteers in putting
% together this example manuscript and supplement.
\appendix
\section{Inverse estimates for the standard SGFEM}
In this appendix, we establish the inverse estimates for the standard SGFEM. Given these estimates, the proof of local efficiency follows the same arguments as those for NEFEM. To this end, we impose the following assumption.
\begin{assumption}[Uniform finite order non-flatness]
\label{appen:assumption}
    Let the enrichment function satisfy $\phi\in W^{m_0+1, \infty}$ for some $m_0\ge2$. There exists a constant $c_0>0$ such that, $\forall x\in\bar\Omega$,
    \begin{equation}
        \max_{2\le m\le m_0}\Vert \nabla^m\phi(x)\Vert\ge c_0>0,
        \label{assumption_eq}
    \end{equation}
    where $\Vert \nabla^m\phi(x)\Vert$ denotes the Frobenius norm of the $m$-th order derivative tensor.
\end{assumption}
\begin{remark}
        It is satisfied by all polynomials of degree at least two. 
        Moreover, some non-polynomial functions such as $\sin(x)$ and $\exp(x)$ also fulfill this assumption. Assumption~\ref{appen:assumption} mainly excludes the degenerate case in which the enrichment function is locally close to a linear polynomial.
\end{remark}
\begin{lemma}[Inverse inequality]
\label{appendix: Inverse inequality}
Let the enrichment function $\phi$ satisfy Assumption~\ref{appen:assumption}. There exists a constant $C>0$ independent of $h$ such that 
\begin{equation}
    \Vert \nabla v_h\Vert_{L^2(K)}\le Ch^{-1}\Vert v_h\Vert_{L^2(K)}
    \label{inverse}
\end{equation}
for all $v_h\in V_h(K)=V_{FEM} \oplus \text{span}\{L_jr_{h, K}: j\in I_h^{enr}\}_{j=1}^M$.
\end{lemma}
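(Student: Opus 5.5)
The plan is to reduce the statement to the abstract argument already used for Lemma~\ref{Inverse inequality}. Concretely, I would show that Assumption~\ref{appen:assumption} implies a uniform version of Assumption~\ref{assumption} for the single enrichment $\phi$, together with a uniform $H^2(\hat K)$ bound on the normalized enrichment. Then Steps~1--3 of that proof apply verbatim.

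\textbf{Pull back and normalize.} Fix $K$ with affine map $F_K:\hat K\to K$ and set $\hat r_K:=\phi\circ F_K-\hat{\mathcal I}(\phi\circ F_K)$ on $\hat K$. Since the problem is linear in the enrichment coefficients, I may replace $\hat r_K$ by $\hat\psi_K:=\hat r_K/\Vert\hat r_K\Vert_{L^2(\hat K)}$ without changing the local space. The local reference space is then $\mathbb P_1(\hat K)\oplus\operatorname{span}\{\hat L_j\hat\psi_K\}_{j=1}^3$.

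\textbf{Two-sided control of $\hat r_K$ by a Taylor expansion.} Expand $\phi$ around the barycenter $x_K$ to order $m_0$. This gives
\[
\phi\circ F_K=\text{(linear part)}+q_K+R_K,\qquad q_K(\hat x)=\sum_{m=2}^{m_0}\tfrac{1}{m!}\nabla^m\phi(x_K)[B_K\hat x]^m ,
\]
where $R_K$ is the remainder term. Using $\phi\in W^{m_0+1,\infty}$, the remainder satisfies $\Vert R_K\Vert_{W^{2,\infty}(\hat K)}\le Ch^{m_0+1}$. Let $\mathcal Q$ be the finite-dimensional space of polynomials of degree $\le m_0$ without constant and linear terms. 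The map $q\mapsto q-\hat{\mathcal I}q$ is injective on $\mathcal Q$, because $q=\hat{\mathcal I}q$ forces $q\in\mathbb P_1$. By finite-dimensional norm equivalence, and using shape regularity to absorb $B_K$, this gives
\[
\Vert q_K-\hat{\mathcal I}q_K\Vert_{L^2(\hat K)}\simeq\Vert q_K-\hat{\mathcal I}q_K\Vert_{H^2(\hat K)}\simeq\sum_{m=2}^{m_0}h^m\Vert\nabla^m\phi(x_K)\Vert\ \ge\ c\,c_0h^{m_0},
\]
where the last inequality uses \eqref{assumption_eq}. For $h\le h_0$ the remainder is therefore absorbed: $\Vert R_K\Vert\le Ch\cdot h^{m_0}\le\tfrac12\Vert q_K-\hat{\mathcal I}q_K\Vert$. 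Consequently $\Vert\hat r_K\Vert_{H^2(\hat K)}\le C\Vert\hat r_K\Vert_{L^2(\hat K)}$ uniformly in $K$ and $h$, so $\{\hat\psi_K\}$ is bounded in $H^2(\hat K)$, hence relatively compact in $H^1(\hat K)$ and in $C^0(\overline{\hat K})$.

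\textbf{Uniform Gram bound and conclusion.} To verify Assumption~\ref{assumption} uniformly, argue by contradiction. Suppose there are $\alpha^n$ with $|\alpha^n|=1$ and $\Vert(\sum_j\alpha^n_j\hat L_j)\hat\psi_{K_n}\Vert_{L^2(\hat K)}\to0$. Passing to subsequences, $\alpha^n\to\alpha$ and $\hat\psi_{K_n}\to\hat\psi$ with $\Vert\hat\psi\Vert_{L^2}=1$, and the limit satisfies $(\sum_j\alpha_j\hat L_j)\hat\psi=0$. The function $\hat\psi$ is continuous and not identically zero, so it is nonzero on an open set. There the linear function $\sum_j\alpha_j\hat L_j$ must vanish, so it vanishes identically and $\alpha=0$, a contradiction. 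With the Gram bound and the $H^2$ bound in hand, Steps~1--3 of Lemma~\ref{Inverse inequality} go through unchanged: the separation from $\mathbb P_1(\hat K)$ follows because the limit vanishes at the vertices. Scaling back then yields \eqref{inverse}.

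\textbf{Main obstacle.} The hard step is the Taylor comparison, because the dominant order $m$ can change with $x_K$. The resolution is to keep all orders $2,\dots,m_0$ in $q_K$ simultaneously and to use only the crude lower bound $h^{m_0}$ against the remainder $h^{m_0+1}$. A secondary gap is the finitely many coarse meshes with $h>h_0$. For these I would argue separately that $\hat r_K\neq0$, since by \eqref{assumption_eq} $\phi$ is not affine on any element, and then use compactness over the finitely many such meshes, or else simply restrict the statement to $h\le h_0$.
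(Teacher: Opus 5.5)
Your proposal is correct. Its first half is the paper's Step~1: Taylor expansion about a point of $K$, removal of the affine part by $I-\hat{\mathcal I}$, injectivity of $I-\hat{\mathcal I}$ on polynomials without constant and linear terms, a lower bound of order $c_0h^{m_0}$ against an $\mathcal O(h^{m_0+1})$ remainder, and the resulting uniform $H^2(\hat K)$ bound on the normalized enrichment $\hat\psi_K$. Your two-sided equivalence with $\sum_m h^m\Vert\nabla^m\phi(x_K)\Vert$ is a more explicit form of the paper's direct-sum argument that singles out $m_*$.

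The second half is organized differently. The paper never passes through Assumption~\ref{assumption}. It takes the best constant $C(\hat\psi)$ of the inverse inequality on $\mathbb P_1(\hat K)\oplus\hat\psi\,\mathbb P_1(\hat K)$ and proves it is upper semicontinuous under $H^1$ convergence of $\hat\psi$. The key point there is that the components $\hat p_n,\hat q_n$ of normalized maximizers $\hat p_n+\hat\psi_n\hat q_n$ stay bounded, shown by contradiction using that the limit $\hat\psi$ vanishes at the vertices. It then concludes because an upper semicontinuous function is bounded on the compact set $\overline{\mathcal F}$.

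You instead verify a uniform Gram bound for $\{\hat L_j\hat\psi_K\}$ by compactness and then invoke Steps~1--3 of Lemma~\ref{Inverse inequality}. The underlying compactness mechanism is the same. The paper's single contradiction argument on $(\hat p_n,\hat q_n)$ handles linear independence and separation from $\mathbb P_1(\hat K)$ at once; you split it into the Gram bound and the separation step.

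Your modular route makes explicit something the paper leaves implicit: Assumption~\ref{appen:assumption} implies Assumption~\ref{assumption} only after normalization. For fixed $\phi$, the unnormalized Gram matrix of $\hat L_j\hat r_{h,K}$ is $\mathcal O(h^4)$. With normalization, the appendix becomes a genuine special case of Lemma~\ref{Inverse inequality}. The paper's route is self-contained and avoids Gram-matrix bookkeeping.

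On coarse meshes, there need not be finitely many meshes with $h>h_0$. Either of two fixes works:
\begin{itemize}
\item a compactness argument over admissible element geometries, using $\hat r_K\neq0$ because $\phi$ is affine on no element;
\item simply restricting to $h\le h_0$.
\end{itemize}
The paper itself only treats sufficiently small $h$.
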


\begin{proof}
    We employ a scaling argument by considering the function space $\hat{V}_{h, K}$ defined on the reference element. As all norms are equivalent in finite dimensional spaces, we have
    \begin{equation}
        \Vert\nabla\hat{v}\Vert_{L^2(\hat K)}\le C(h, K)\Vert \hat{v}\Vert_{L^2(\hat K)},
        \label{reference}
    \end{equation}
    for all $\hat{v}\in \hat{V}_{h, K}$. In the following, we show that $C(h, K)$ is uniformly bounded, i.e., $C(h, K)\le C^*$. 

    (Step 1) Taylor’s theorem at a fixed point $\hat{x}_K$ on $\hat K$ implies that
     \begin{equation}
         \hat{r}_{h, K} = \sum_{m=2}^{m_0} h^m\hat b_{m, K} + \hat{R}_{h, K} :=\hat\pi_{h, K} + \hat{R}_{h, K}.
     \end{equation}
     Here, each $\hat{b}_{m, K}$ depends linearly on $\{D^\alpha\phi(x_K)\}_{|\alpha|=m}$, where $x_K$ is the corresponding point in physical element $K$. Linear interpolation yields $\hat b_{m, K}\in\mathcal{B}_{m, \hat{x}_K}:= (I - \hat{\mathcal{I}}) \text{span}\{(\hat x- \hat{x}_K)^\alpha, |\alpha|=m\}$, where $I$ and $\hat{\mathcal{I}}$ are the identity operator and linear interpolation on $\hat K$, respectively. Since $\mathcal{B}_{m, \hat{x}_K}$ is finite-dimensional, and the mapping
     $\{D^\alpha\phi(x_K)\}_{|\alpha|=m}\mapsto\hat{b}_{m, K}, m\ge2$
     is linear and injective, there exist $c_1, c_2>0$,
     \begin{equation}
         \frac{c_1}{m!}\Vert\nabla^m\phi(x_K)\Vert\le\Vert\hat{b}_{m, K}\Vert_{L^2(\hat K)}\le \frac{c_2}{m!}\Vert\nabla^m\phi(x_K)\Vert.
     \end{equation}
     Moreover, the remainder satisfies
     \begin{equation}
         \Vert\hat{R}_{h, K}\Vert_{H^2(\hat K)}\le Ch^{m_0+1}.
         \label{Numerator}
     \end{equation}
    By Assumption~\ref{appen:assumption}, there exists $m_*\ge2$ such that $\Vert\nabla^{m_*}\phi(x_K)\Vert\ge c_0$. Note that $\oplus_{m=2}^{m_0}\mathcal{B}_{m, \hat{x}_K}$ is direct, and $\hat{\pi}_{h, K}\in\oplus_{m=2}^{m_0}\mathcal{B}_{m, \hat{x}_K}$. We have the following estimates for sufficiently small $h$:
     \begin{equation}
         \Vert\hat{\pi}_{h, K}\Vert_{L^2(\hat K)}\ge C_eh^{m_*}\Vert\hat{b}_{m_*, K}\Vert_{L^2(\hat K)}\ge\frac{C_ec_1c_0}{m_0!}h^{m_0}.
         \label{Denominator}
     \end{equation}
     Combining \eqref{Numerator} and \eqref{Denominator}, we have
     \begin{equation}
         \frac{\Vert\hat{R}_{h, K}\Vert_{H^2(\hat K)}}{\Vert\hat{\pi}_{h, K}\Vert_{L^2(\hat K)}}\to0, \qquad h\to0.
         \label{limit}
     \end{equation}
     On the other hand, there exists $C_{m_0}>0$
     \begin{equation}
         C_{m_0} = \sup_{0\neq \hat \pi\in\oplus_{m=2}^{m_0}\mathcal{B}_{m, \hat{x}_K}}\frac{\Vert \hat{\pi}\Vert_{H^2(\hat K)}}{\Vert \hat \pi\Vert_{L^2(\hat K)}}<\infty.
         \label{Cm0}
     \end{equation}
     Normalize $\hat r_{h, K}$ by $\hat{\psi}_{h, K}:=\frac{\hat{r}_{h, K}}{\Vert\hat r_{h, K}\Vert_{L^2(\hat K)}}$. Then $
        \hat{V}_{h, K} = \mathbb P_1(\hat{K}) \oplus \text{span}\{\hat{L}_j\hat{\psi}_{h, K}\}_{j=1}^M=\hat{S}(\hat{\psi}_{h, K}).
    $
     From \eqref{Cm0}, the following estimate holds
     \begin{equation}
         \Vert\hat\psi_{h, K}\Vert_{H^2(\hat K)}=\frac{\Vert\hat\pi_{h, K} + \hat{R}_{h, K}\Vert_{H^2(\hat K)}}{\Vert\hat\pi_{h, K} + \hat{R}_{h, K}\Vert_{L^2(\hat K)}}\le\frac{C_{m_0}\Vert\hat\pi_{h, K}\Vert_{L^2(\hat K)} + \Vert\hat{R}_{h, K}\Vert_{H^2(\hat K)}}{\Vert\hat\pi_{h, K}\Vert_{L^2(\hat K)} - \Vert\hat{R}_{h, K}\Vert_{L^2(\hat K)}}
         \label{frac}
     \end{equation}
     Applying \eqref{limit} and \eqref{frac}, $\mathcal{F}:=\{\hat \psi_{h,K}\}$ is uniformly bounded in $H^2(\hat K)$. Since the embedding $H^2(\hat K)\hookrightarrow H^1(\hat K)$ is compact, $\mathcal{F}$ is therefore relatively compact in $H^1(\hat K)$.
    
    (Step 2) The best possible constant $C(h, K)$ is given by:
    \begin{equation}
        C(h, K) \le C(\hat{\psi})=\sup_{\hat v\in\hat{S}(\hat{\psi})\setminus\{0\}}\frac{\Vert\hat\nabla\hat v\Vert_{L^2(\hat K)}}{\Vert\hat v\Vert_{L^2(\hat K)}}=\sup_{\substack{\hat v \in \hat S(\hat{\psi}) \\ \Vert\hat{v}\Vert_{L^2(\hat K)}=1}}\Vert\hat{\nabla}\hat{v}\Vert_{L^2(\hat K)}.
    \end{equation}
    We show that $C(\hat \psi)$ is upper semi-continuous with respect to $\hat \psi$.
    Since $\hat \psi$ ranges over a compact set $\overline{\mathcal{F}}$,
    this implies that $C(h,K)$ is uniformly bounded. 
    
    Let $\hat{\psi}_n\to\hat{\psi}\in H^1(\hat K)$. Since $\hat S(\hat \psi_n)$ is finite-dimensional, the supremum defining $C(\hat \psi_n)$ is attained. Hence, for each $\hat \psi_n$ there exists $\hat v_n = \hat p_n + \hat \psi_n \hat q_n \in \hat S(\hat \psi_n)$ such that
    $
    \|\nabla \hat v_n\|_{L^2(\hat K)} = C(\hat \psi_n)$, $\|\hat v_n\|_{L^2(\hat K)} = 1$.
    Here, $\{p_n\}$ and $\{q_n\}$ are bounded. If not, we can find a subsequence such that $A_n := \Vert p_n\Vert_{L^2(\hat{K})} + \Vert q_n\Vert_{L^2(\hat{K})} \to \infty$. Let
$
\tilde{p}_n := \frac{p_n}{A_n}, \quad \tilde{q}_n := \frac{q_n}{A_n}$, which yields $\tilde{p}_n + \psi_n\tilde{q}_n = \frac{v_n}{A_n}.$
It then follows that $
\Vert\tilde{p}_n\Vert_{L^2(\hat{K})} + \Vert\tilde{q}_n\Vert_{L^2(\hat{K})} = 1$, and $\Vert\tilde{p}_n+\psi_n\tilde{q}_n\Vert_{L^2(\hat K)} \to 0.
$
We extract a convergent subsequence such that $\tilde{p}_n \to \tilde{p}$ and $\tilde{q}_n \to \tilde{q}$. Consequently,
$
\tilde{p} + \psi \tilde{q} = 0$ a.e. in $\hat{K}$.
Noticing that $\psi$ vanishes at each node of $\hat{K}$, we deduce that $\tilde{p} \equiv 0$, which implies $\tilde{q} \equiv 0$. This contradicts the condition $\Vert\tilde{p}\Vert_{L^2(\hat{K})} + \Vert\tilde{q}\Vert_{L^2(\hat{K})} = 1$.
    
We extract convergent subsequences $\hat{p}_{n_k}, \hat{q}_{n_k}\to\hat{p}, \hat{q}\in\mathbb P_1(\hat{K})$. Then,
    \begin{equation}
    C(\hat{\psi}_{n_k})=\Vert\hat{\nabla}\hat{v}_{n_k}\Vert_{L^2(\hat{K})}\to\Vert\hat{\nabla}\hat v\Vert_{L^2(\hat K)}\le C(\hat{\psi}),
    \end{equation}
    which implies that $\limsup_{n\to+\infty}C(\hat{\psi}_n)\le C(\hat{\psi})$.
    Therefore, $C(\hat\psi)$ is upper semi-continuous; \eqref{inverse} holds.
\end{proof}

To handle the flux jump term on element edges,
we introduce a further assumption that is slightly stronger than
Assumption~\ref{appen:assumption}.
\begin{assumption}\label{appen:edge assumption}Let $\mathcal{E}$ denote the set of direction vectors associated with the mesh $\mathcal{T}_h$. Let the enrichment function satisfy $\phi\in W^{m_0+1, \infty}$ for some $2\le m_0<\infty$. There exists a constant $c_0>0$ such that, $\forall x\in\bar\Omega$,
    \begin{equation}
        \forall \tau\in\mathcal{E}, \max_{2\le m\le m_0}\vert \nabla^m\phi(x)[\tau, \cdots, \tau]\vert\ge c_0>0,
        \label{assumption_t}
    \end{equation}
    where $\nabla^m\phi[\tau, \cdots, \tau]$ denotes the $m$th-order directional derivative in the direction $\tau$. 
\end{assumption}
\begin{remark}
        Assumption~\ref{appen:edge assumption} mainly excludes the degenerate case in which the enrichment function is locally close to a linear polynomial along the mesh edge directions. As an unfitted method, the discretization can be carried out on structured meshes. In this case, the set of mesh edge directions $\mathcal{E}$ is finite.
\end{remark}
As a consequence of Lemma \ref{appen:edge assumption}, we have the following inverse estimate.
\begin{lemma}
    Consider the enrichment function $\phi$ satisfy Assumption~\ref{appen:edge assumption}. Let $v_h \in V_h$, and denote by $w_h := [\partial_n v_h]$ the normal flux jump across an interior edge $e$. There exists a constant $C>0$ independent of $h$ such that
    \begin{equation}
        \Vert\nabla w_h\Vert_{L^2(e)}\le Ch^{-1}\Vert w_h\Vert_{L^2(e)}.
    \end{equation}
\end{lemma}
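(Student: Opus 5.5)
We reduce to the reference edge and mimic the proof of Lemma~\ref{appendix: Inverse inequality}, now in one dimension along $e$. First we identify the trace space. As in \eqref{flux space}, write $v_h|_{K^\pm}=p^\pm+\sum_j\alpha_j^\pm L_j r_{h}$ with $r_h=\phi-\mathcal{I}_h\phi$. Since $\phi$ is smooth across $e$ (the single enrichment is globally $W^{m_0+1,\infty}$), the jump satisfies
\[
w_h=[\partial_n v_h]\in W_e:=\mathbb P_1(e)\oplus\operatorname{span}\{r_{h}|_e\}.
\]
Here the $\mathbb P_1(e)$ part collects $[\partial_n p]$ and the terms $L_j|_e[\partial_n(\phi-\mathcal I_h\phi)]$; the latter is linear on $e$ because $\partial_n\phi$ is continuous and $\partial_n\mathcal I_h\phi$ is piecewise constant. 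The second part collects the terms $[\partial_nL_j]\,r_h|_e$. On $e$, $\mathcal I_h\phi$ is the linear interpolant of $\phi$ between the two endpoints of $e$. Hence $r_h|_e$ is the one-dimensional interpolation error of $g(t):=\phi(x_e+t\tau)$, where $\tau$ is the edge direction. We map $e$ affinely to $\hat e=[0,1]$, so $\hat r_{h,e}(\hat t)=g(h\hat t)-\text{(linear interpolant)}$. Scaling gives $\Vert\nabla w\Vert_{L^2(e)}=h^{-1}\cdot h^{1/2}\Vert\hat w'\Vert_{L^2(\hat e)}$ and $\Vert w\Vert_{L^2(e)}=h^{1/2}\Vert\hat w\Vert_{L^2(\hat e)}$, where $\nabla$ means the tangential derivative. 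It therefore suffices to show $\Vert\hat w'\Vert_{L^2(\hat e)}\le C\Vert \hat w\Vert_{L^2(\hat e)}$ uniformly on $\hat W_e=\mathbb P_1(\hat e)\oplus\operatorname{span}\{\hat r_{h,e}\}$.

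Second, we prove non-degeneracy of the normalized enrichment, following Step~1 of Lemma~\ref{appendix: Inverse inequality}. Taylor expansion of $g$ at a point of $e$ gives
\[
\hat r_{h,e}=\sum_{m=2}^{m_0}\frac{h^m}{m!}\nabla^m\phi(x_e)[\tau,\dots,\tau]\,(I-\hat{\mathcal I})\hat t^m+\hat R_{h,e},\qquad \Vert\hat R_{h,e}\Vert_{H^2(\hat e)}\le Ch^{m_0+1}.
\]
The functions $(I-\hat{\mathcal I})\hat t^m$, $m=2,\dots,m_0$, are linearly independent, since they are polynomials of distinct degrees. By Assumption~\ref{appen:edge assumption}, some coefficient $\nabla^{m_*}\phi(x_e)[\tau,\dots,\tau]$ is at least $c_0$ in modulus. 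This is exactly where the directional assumption replaces the Frobenius-norm assumption: a large Hessian in a direction transverse to $e$ would be invisible on the edge. It follows that the polynomial part has $L^2(\hat e)$ norm at least $c h^{m_0}$, which dominates the remainder. The normalized functions $\hat\psi_{h,e}=\hat r_{h,e}/\Vert\hat r_{h,e}\Vert_{L^2(\hat e)}$ are then uniformly bounded in $H^2(\hat e)$ by the analogue of \eqref{Cm0}--\eqref{frac}. By compactness of $H^2(\hat e)\hookrightarrow H^1(\hat e)$, they form a relatively compact family $\mathcal F$. We also need uniformity over $\tau$. The direction set is finite on structured meshes, and in general the constants $c_1,c_2,C_{m_0}$ do not depend on $\tau$ at all, because everything is expressed through the scalar coefficients and the fixed polynomials $(I-\hat{\mathcal I})\hat t^m$.

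Third, we run the upper semicontinuity argument of Step~2 for
\[
C(\hat\psi)=\sup\bigl\{\Vert\hat v'\Vert_{L^2(\hat e)}:\ \hat v=\hat p+q\hat\psi,\ \hat p\in\mathbb P_1(\hat e),\ q\in\mathbb R,\ \Vert\hat v\Vert_{L^2(\hat e)}=1\bigr\}.
\]
Take maximizers $\hat v_n$ along $\hat\psi_n\to\hat\psi$ in $H^1$. If the coefficients $(\hat p_n,q_n)$ were unbounded, normalizing would produce a limit with $\tilde p+\tilde q\hat\psi=0$. Since $\hat\psi$ vanishes at both endpoints, $\tilde p$ vanishes there too, so $\tilde p\equiv0$; then $\tilde q=0$ because $\Vert\hat\psi\Vert_{L^2}=1$, a contradiction. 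Hence the coefficients are bounded, and passing to the limit gives $\limsup C(\hat\psi_n)\le C(\hat\psi)<\infty$. Thus $C$ is bounded on the compact set $\overline{\mathcal F}$, which yields the reference estimate and, after scaling back, the claim.

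The main obstacle is Step~2: the uniform lower bound on $\Vert\hat r_{h,e}\Vert_{L^2(\hat e)}$ relative to its $H^2$ norm. The orders $m_*$ may vary between edges, so we should bound below by $h^{m_0}$ only for the normalization. The $H^2$ bound then must use the ratio $\Vert\hat R\Vert_{H^2}/\Vert\hat\pi\Vert_{L^2}\le Ch$, together with the equivalence of norms on the fixed finite-dimensional polynomial space; if the argument only used the crude bound $h^{m_0}$ in both places, this ratio control would be lost.
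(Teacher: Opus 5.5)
Your proposal is correct and follows the same route as the paper: it identifies the trace space $W_{h,e}=\mathbb{P}_1(e)\oplus\operatorname{span}\{r|_e\}$ and then reruns the argument of Lemma~\ref{appendix: Inverse inequality} (Taylor expansion, normalization, compactness, upper semicontinuity). You also fill in what the paper leaves implicit, namely that on the reference edge the enrichment is the one-dimensional interpolation error of $t\mapsto\phi(x_e+t\tau)$, so the directional condition in Assumption~\ref{appen:edge assumption} is exactly what keeps the normalized functions bounded in $H^2(\hat e)$.
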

\begin{proof}
    Being similar to \eqref{flux space}, we have
        \begin{equation}
        w_h\in W_{h, e}:=\mathbb{P}_1(e)\oplus\text{span}\{r|_e\}.
    \end{equation}
    The rest of the proof then follows the same lines as that of Lemma~\ref{appendix: Inverse inequality}.
\end{proof}
\bibliographystyle{siamplain}
\bibliography{references}
\end{document}

%% file: ex_shared.tex
\usepackage{lipsum}
\usepackage{amsfonts}
\usepackage{graphicx}
\usepackage{epstopdf}
\usepackage{algorithmic}
\ifpdf
  \DeclareGraphicsExtensions{.eps,.pdf,.png,.jpg}
\else
  \DeclareGraphicsExtensions{.eps}
\fi

\newsiamremark{remark}{Remark}
\newsiamremark{hypothesis}{Hypothesis}
\crefname{hypothesis}{Hypothesis}{Hypotheses}
\newsiamthm{claim}{Claim}
\newsiamremark{fact}{Fact}
\crefname{fact}{Fact}{Facts}

\headers{Neural enrichment finite element method}{S. Guo, T. Richter}

\title{Neural enrichment finite element method: A hybrid method for problems with strong oscillations and interface problems\thanks{Submitted to the editors DATE.
\funding{The authors gratefully acknowledge the funding by the European Regional Development Fund (ERDF) within the programme Research and Innovation - Grant Number ZS/2023/12/182075. SG is also affiliated to the International Max Planck Research School for Systems and Process Engineering for a Sustainable Chemical Production (IMPRS SysProSus), Magdeburg, Germany.}}}

\author{Shihan Guo\thanks{Institute for Analysis and Numerics, Otto-von-Guericke-Universität Magdeburg, Magdeburg, 39106, Germany 
  (\email{shihan.guo@ovgu.de}, \email{thomas.richter@ovgu.de}).}
\and Thomas Richter\footnotemark[2]}

\usepackage{amsopn}

%% file: ex_article.bbl
\begin{thebibliography}{10}

\bibitem{aballay2025r}
{\sc D.~Aballay, F.~Fuentes, V.~Iligaray, {\'A}.~J. Omella, D.~Pardo, M.~A. S{\'a}nchez, I.~Tapia, and C.~Uriarte}, {\em An r-adaptive finite element method using neural networks for parametric self-adjoint elliptic problems}, J. Comput. Phys.,  (2025), p.~114447.

\bibitem{AlnaesEtal2015}
{\sc M.~S. Alnaes, J.~Blechta, J.~Hake, A.~Johansson, B.~Kehlet, A.~Logg, C.~Richardson, J.~Ring, M.~E. Rognes, and G.~N. Wells}, {\em The {FEniCS} project version 1.5}, Arch. Numer. Software, 3 (2015).

\bibitem{AlnaesEtal2014}
{\sc M.~S. Alnaes, A.~Logg, K.~B. {\O}lgaard, M.~E. Rognes, and G.~N. Wells}, {\em Unified form language: A domain-specific language for weak formulations of partial differential equations}, ACM Trans. Math. Software, 40 (2014).

\bibitem{babuvska1970finite}
{\sc I.~Babu{\v{s}}ka}, {\em The finite element method for elliptic equations with discontinuous coefficients}, Computing, 5 (1970), pp.~207--213.

\bibitem{babuvska2012stable}
{\sc I.~Babu{\v{s}}ka and U.~Banerjee}, {\em Stable generalized finite element method (sgfem)}, Comput. Methods Appl. Mech. Engrg., 201 (2012), pp.~91--111.

\bibitem{babuvska2017strongly}
{\sc I.~Babu{\v{s}}ka, U.~Banerjee, and K.~Kergrene}, {\em Strongly stable generalized finite element method: Application to interface problems}, Comput. Methods Appl. Mech. Engrg., 327 (2017), pp.~58--92.

\bibitem{baek2024n}
{\sc J.~Baek, Y.~Wang, and J.-S. Chen}, {\em N-adaptive ritz method: A neural network enriched partition of unity for boundary value problems}, Comput. Methods Appl. Mech. Engrg., 428 (2024), p.~117070.

\bibitem{chamoin2023introductory}
{\sc L.~Chamoin and F.~Legoll}, {\em An introductory review on a posteriori error estimation in finite element computations}, SIAM Review, 65 (2023), pp.~963--1028.

\bibitem{cui2022stable}
{\sc C.~Cui, Q.~Zhang, U.~Banerjee, and I.~Babu{\v{s}}ka}, {\em Stable generalized finite element method (sgfem) for three-dimensional crack problems}, Numer. Math., 152 (2022), pp.~475--509.

\bibitem{frei2014locally}
{\sc S.~Frei and T.~Richter}, {\em A locally modified parametric finite element method for interface problems}, SIAM J. Numer. Anal., 52 (2014), pp.~2315--2334.

\bibitem{fries2008corrected}
{\sc T.-P. Fries}, {\em A corrected xfem approximation without problems in blending elements}, Internat. J. Numer. Methods Engrg., 75 (2008), pp.~503--532.

\bibitem{fries2010extended}
{\sc T.-P. Fries and T.~Belytschko}, {\em The extended/generalized finite element method: an overview of the method and its applications}, Internat. J. Numer. Methods Engrg., 84 (2010), pp.~253--304.

\bibitem{gong2024improved}
{\sc W.~Gong, H.~Li, and Q.~Zhang}, {\em Improved enrichments and numerical integrations in sgfem for interface problems}, J. Comput. Appl. Math., 438 (2024), p.~115540.

\bibitem{hansbo2002unfitted}
{\sc A.~Hansbo and P.~Hansbo}, {\em An unfitted finite element method, based on nitsche’s method, for elliptic interface problems}, Comput. Methods Appl. Mech. Engrg., 191 (2002), pp.~5537--5552.

\bibitem{hansbo2014cut}
{\sc P.~Hansbo, M.~G. Larson, and S.~Zahedi}, {\em A cut finite element method for a stokes interface problem}, Appl. Numer. Math, 85 (2014), pp.~90--114.

\bibitem{hong2025fem}
{\sc Y.~Hong, W.~Zhang, L.~Zhao, and H.~Zheng}, {\em Fem-msfem hybrid method for the stokes-darcy model}, J. Comput. Phys., 532 (2025), p.~113952.

\bibitem{hou1999convergence}
{\sc T.~Hou, X.-H. Wu, and Z.~Cai}, {\em Convergence of a multiscale finite element method for elliptic problems with rapidly oscillating coefficients}, Math. Comput., 68 (1999), pp.~913--943.

\bibitem{hou1997multiscale}
{\sc T.~Y. Hou and X.-H. Wu}, {\em A multiscale finite element method for elliptic problems in composite materials and porous media}, J. Comput. Phys., 134 (1997), pp.~169--189.

\bibitem{jagtap2020adaptive}
{\sc A.~D. Jagtap, K.~Kawaguchi, and G.~E. Karniadakis}, {\em Adaptive activation functions accelerate convergence in deep and physics-informed neural networks}, J. Comput. Phys., 404 (2020), p.~109136.

\bibitem{Kapustsin2023}
{\sc U.~Kapustsin, U.~Kaya, and T.~Richter}, {\em A hybrid finite element/neural network solver and its application to the poisson problem}, 2023, \url{https://doi.org/10.1002/pamm.202300135}, \url{http://arxiv.org/abs/2307.00947}.

\bibitem{liu2024symmetric}
{\sc C.~Liu and B.~Liu}, {\em Symmetric and asymmetric gauss and gauss--lobatto quadrature rules for triangles and their applications to high-order finite element analyses}, J. Comput. Appl. Math., 437 (2024), p.~115451.

\bibitem{HartmannLessigMargenbergRichter2020}
{\sc N.~Margenberg, D.~Hartmann, C.~Lessig, and T.~Richter}, {\em A neural network multigrid solver for the navier-stokes equations}, Journal of Computational Physics, 460 (2022), p.~110983, \url{https://doi.org/10.1016/j.jcp.2022.110983}, \url{https://arxiv.org/abs/2008.11520}.

\bibitem{MargenbergJendersieLessigRichter2023}
{\sc N.~Margenberg, R.~Jendersie, C.~Lessig, and T.~Richter}, {\em Dnn-mg: A hybrid neural network/finite element method with applications to 3d simulations of the navier-stokes equations}, Computer Methods in Applied Mechanics and Engineering, 420 (2024), p.~116692, \url{https://doi.org/10.1016/j.cma.2023.116692}.

\bibitem{melenk1996partition}
{\sc J.~M. Melenk and I.~Babu{\v{s}}ka}, {\em The partition of unity finite element method: basic theory and applications}, Comput. Methods Appl. Mech. Engrg., 139 (1996), pp.~289--314.

\bibitem{raissi2019physics}
{\sc M.~Raissi, P.~Perdikaris, and G.~E. Karniadakis}, {\em Physics-informed neural networks: A deep learning framework for solving forward and inverse problems involving nonlinear partial differential equations}, J. Comput. Phys., 378 (2019), pp.~686--707.

\bibitem{strouboulis2001generalized}
{\sc T.~Strouboulis, K.~Copps, and I.~Babu{\v{s}}ka}, {\em The generalized finite element method}, Comput. Methods Appl. Mech. Engrg., 190 (2001), pp.~4081--4193.

\bibitem{verfurth1994posteriori}
{\sc R.~Verf{\"u}rth}, {\em A posteriori error estimation and adaptive mesh-refinement techniques}, J. Comput. Appl. Math., 50 (1994), pp.~67--83.

\bibitem{wang2025general}
{\sc D.~Wang, H.~Li, and Q.~Zhang}, {\em General enrichments of stable gfem for interface problems: Theory and extreme learning machine construction}, Appl. Numer. Math, 214 (2025), pp.~143--159.

\bibitem{wang2025neural}
{\sc Y.~Wang, Z.~Lin, and H.~Xie}, {\em Neural network element method for partial differential equations}, arXiv preprint arXiv:2504.16862,  (2025).

\bibitem{yu2018deep}
{\sc B.~Yu and W.~E}, {\em The deep ritz method: a deep learning-based numerical algorithm for solving variational problems}, Commun. Math. Stat., 6 (2018), pp.~1--12.

\bibitem{zang2020weak}
{\sc Y.~Zang, G.~Bao, X.~Ye, and H.~Zhou}, {\em Weak adversarial networks for high-dimensional partial differential equations}, J. Comput. Phys., 411 (2020), p.~109409.

\bibitem{zhang2019strongly}
{\sc Q.~Zhang, U.~Banerjee, and I.~Babu{\v{s}}ka}, {\em Strongly stable generalized finite element method (ssgfem) for a non-smooth interface problem}, Comput. Methods Appl. Mech. Engrg., 344 (2019), pp.~538--568.

\end{thebibliography}
